\newtheorem{theorem}{Theorem}[section]
\newtheorem{corollary}[theorem]{Corollary}
\newtheorem{proposition}[theorem]{Proposition}
\theoremstyle{definition}
\newtheorem{definition}[theorem]{Definition}
\newtheorem{example}[theorem]{Example}
\newtheorem{remark}[theorem]{\bf Remark}
\newcommand{\oomit}[1]{{}}
\newcommand{\bN}{\ensuremath{\mathbb{N}}}
\newcommand{\bC}{\ensuremath{\mathbb{C}}}
\newcommand{\cF}{\ensuremath{\mathcal{F}}}
\newcommand{\lbox}{\lower 1pt \hbox{$\,\ssq\,$} }
\renewcommand{\phi}{\ensuremath{\varphi}}
\renewcommand{\epsilon}{\ensuremath{\varepsilon}}
\renewcommand{\leq}{\ensuremath{\leqslant}}
\renewcommand{\geq}{\ensuremath{\geqslant}}
\newcommand{\ssq}{\raise 1pt\hbox{$\scriptscriptstyle \square$}}
\newcommand \dstar{\mbox{\rm (\kern-0pt $\genfrac{}{}{0pt}{}{\raisebox{0pt}{$\star$}}{\raisebox{1pt}{$\star$}}$\rm)}}
\newcounter{smallromans}
\newcommand{\C}{\mathcal{C}}
\newcommand{\tensor}{\widehat{\otimes}}
\newcommand{\BAI}{bounded approx\-imate identity}
\newcommand{\AI}{approx\-imate identity}
 \newcommand{\BAAy}{bounded approx\-imate amen\-ability}
\newcommand{\ApA}{approx\-imately amen\-able}
\newcommand{\ApsA}{approx\-imately semi-\-amen\-able}
\newcommand{\ApsCy}{approx\-imate semi-\-contract\-ibility}
\newcommand{\ApsAy}{approx\-imate semi-\-amen\-ability}
\newcommand{\ApsC}{approx\-imately semi-\-contract\-ible}
\newcommand{\ApAy}{approx\-imate amen\-ability}
 \newcommand{\BApA}{boundedly approx\-imately amen\-able}
 \newcommand{\BApsA}{boundedly approx\-imately semi-amen\-able}
  \newcommand{\BApSAy}{bounded approx\-imate semi-amen\-ability}
\newcommand{\BASC}{boundedly approx\-imately semi-contract\-ible}
  \newcommand{\BApSCy}{bounded approx\-imate semi-contract\-ibility}
\newcommand{\BAC}{boundedly approx\-imately contract\-ible}
 \newcommand{\AC}{approx\-imately contract\-ible}
 \newcommand{\ACy}{approx\-imate contract\-ibility}
  \newcommand{\BACy}{bounded approx\-imate contract\-ibility}
\begin{document}




\author{F. Ghahramani\fnref{labelG}}
\address{Department of Mathematics, 
University of Manitoba, Winnipeg, R3T 2N2, Canada}
\ead{Fereidoun.Ghahramani@umanitoba.ca}

\author{R. J. Loy\corref{cor2}}
\address{Mathematical Sciences Institute, 
Hanna Neumann Building 145,  Science Road, Australian National University, 
Canberra, ACT 2601, Australia}
\ead{Rick.Loy@anu.edu.au}

\title
{Approximate semi-amenability of Banach  algebras} 

\fntext[labelG]{Supported by NSERC grant 05476-2017}



\newcommand\timestring{\begingroup
\count0=\time \divide\count0 by 60
\count2 = \count0
\count4 = \time \multiply\count0 by 60
\advance\count4 by -\count0
\ifnum\count4 <10 \toks1={0}%
\else \toks1 = {}%
\fi
\ifnum\count2<12 \toks0 = {a.m.}%
\else \toks0 = {p.m.}%
\advance\count2 by -12
\fi
\ifnum\count2=0 \count2 = 12
\fi
\number\count2:\the\toks1 \number\count4
\thinspace \the\toks0
\endgroup}

\newcommand\timenow{ \timestring \enskip
{\ifcase\month\or January\or February\or March\or 
  April\or May\or June\or July\or August\or Sepember\or
  October\or November\or December\fi}
\number\day}

\begin{abstract} 

In recent work  of the authors the notion of a derivation being approximately semi-inner arose as a tool for investigating (approximate) amenability questions  for  Banach algebras.  Here we investigate this property in its own right, together with the consequent one of approximately semi-amenability. Under certain hypotheses regarding approximate identities this new notion is the same as approximate amenability,  but more generally it covers some important classes of algebras which are not approximately amenable, in particular Segal algebras on amenable SIN-groups. 
 \end{abstract}

\begin{keyword}  
Approximately semi-inner \sep amenable Banach algebra \sep \ approximate identity, Segal algebra
 \MSC[2010]  46H25
 \end{keyword}

\maketitle 

\section{Introduction}  \label{Introduction}
The concept of amenability for a Banach algebra, introduced by Johnson in 
\cite{John}, has proved to be of enormous importance in Banach algebra theory. 
In \cite{GhaLoy}, and subsequently in \cite{GhaLoyZh},  several modifications of this notion were introduced, in particular that of approximate amenability;  and  much work has been done in the last 10 years or so, \cite{DLZ, CGZ, CG, GhaSt, DL, GhaR1, GhaR2}, for example. See also \cite{Zhang} for a recent survey. More recently, \cite{GhaLoy2} investigated the situation for tensor products, and,\emph{en passant}, introduced the notions to be considered here.

\vskip 2mm
Let $A$ be an algebra, $X$ an $A$-bimodule.  A \emph{derivation} is a linear map $D: A \to X$ such that
\[
D(ab)  = a\,\cdot \, D(b) + D(a)\,\cdot \,b \quad (a,b \in A)\,.
\]
For $\xi \in X$, set ${\rm ad}_{\xi}: a \mapsto  a\,\cdot \, \xi - \xi\,\cdot \,a,\,\; A \to X$. Then ${\rm ad}_{\xi}$ is a derivation; these are  the \emph{inner} derivations.

Let $A$ be a Banach algebra, $X$ be a
Banach $A$-bimodule.  A continuous derivation $D:A\to X$ is \emph{approximately inner} if there is
a net $(\xi_i)$ in $X$ such that
\[
D(a) =
\lim_{i}(a\,\cdot\,\xi_i  - \xi_i\,\cdot\, a)\quad (a \in A)\,,
\]
so that $D =\lim_{i}{\rm ad}_{x_i}$ in the strong-operator topology  of ${\mathcal B}(A , X)$.

\begin{definition}\cite{GhaLoy, GhaLoyZh} \label{app} Let $A$ be a Banach algebra. Then
$A$ is \emph{approximately amenable} (resp. \emph{approximately contractible})  if, for
each Banach $A$-bimodule $X$, every continuous derivation $D:A\to X^{*}$ (resp. $D:A\to X$), is approximately inner.  If it is always possible to choose the approximating net $({\rm ad}_{\xi_i})$ to be bounded (with the bound dependent only on $D$) then $A$ is \emph{boundedly} \ApA\ (resp. \emph{boundedly} \AC).
\end{definition}

Of course $A$ is \emph{amenable} (resp. \emph{contractible}) if every continuous derivation $D:A\to X^{*}$ (resp. $D:A\to X$), is inner, for every Banach $A$-bimodule $X$.

\vskip1mm 


 During final preparations of this paper, the article \cite{KaA} appeared on the arXiv.  The authors there refer to \cite{GhaLoy2} for the introduction of the terminology semi-inner and semi-amenability.  However they have overlooked \cite[Corollary 3.5 and Remark 3.6]{GhaLoy2}, \emph{cf.} Proposition \ref{equality1} (iii) below, which shows that the notions of semi-amenability and amenability coincide. Similarly, semi-contractibility and contractibility coincide.  In particular, this answers the final Problem in \cite{KaA}.  The paper \cite{GADM}, by the same authors and one other, has the same oversight.  More recently,  the same authors have listed \cite{KAM} on the arXiv.  Unfortunately, there are numerous errors in  \cite{KAM}.


\section{Semi-inner derivations}\label{new method}
The following definition was given in \cite{GhaLoy2}. This type of mapping is not new, see \cite{AM, Br, CM}, where they are called `generalized inner (derivations)', but the additional condition that they actually be derivations puts a different perspective on the situation.
\begin{definition}
Let $A$ be an algebra, $X$ an $A$-bimodule.  A linear map $D:A\to X$ is \emph{semi-inner} if there are $m, n\in X$ such that 
\begin{equation}\label{addefn}
D(a) = {\rm ad}_{\xi, \eta}(a): = a\cdot \xi- \eta\cdot a \qquad (a\in A)\,.
\end{equation}
\end{definition}

Supposing that $D$ is in fact a derivation, then $\xi - \eta$  is highly constrained.  The derivation identity  shows that  if ${\rm ad}_{\xi, \eta}$ is a  derivation then
\begin{equation}\label{annihilation}
 a\cdot (\xi - \eta)\cdot b = 0\qquad (a, b\in A)\,.
\end{equation}
Conversely, if $\xi, \eta \in X$ satisfy \eqref{annihilation} then it is immediate that ${\rm ad}_{\xi, \eta}$ is a derivation.

It was shown in \cite{GhaLoy2} that semi-inner is indeed a strictly weaker notion than inner, but that  
 in many situations of interest the notions coincide.  
\begin{proposition} {\rm (\cite[Proposition 3.3, Corollary 3.5]{GhaLoy2})}  \label{equality1}  
Let $A$ be a Banach algebra, $X$ a Banach $A$-bimodule, $D:A\to X$ (resp. $D:A\to X^{*}$) a derivation.
 Suppose that $D$ is semi-inner. Then in each of the following cases  $D$ is inner:
		\begin{enumerate}
			\item[(i)]  $A$ has left and right approximate identities for $X$;
			\item[(ii)]  $D:A\to X^{*}$ and $X$ is neo-unital; 
			\item[(iii)] $D:A\to X^{*}$ and $A$ has a \BAI\,.
		\end{enumerate}
		In particular, if $A$ is semi-amenable then $A$ is amenable.
 \end{proposition}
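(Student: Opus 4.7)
The plan is to remove, in each case, the discrepancy $\phi := \xi - \eta$ that is forced by \eqref{annihilation} to be bimodule-annihilated by $A$, either by killing one of its one-sided actions or by absorbing it into a modified implementing element. For (i), I would invoke a left approximate identity $(e_i)\subset A$ for $X$ to write $(\xi-\eta)\cdot a = \lim_i e_i\cdot(\xi-\eta)\cdot a = 0$ by \eqref{annihilation}, so that $\eta\cdot a = \xi\cdot a$ for every $a$ and hence $D = {\rm ad}_\xi$; the dual-module case $D:A\to X^*$ is handled the same way via the right a.i.\ for $X$, which induces the required weak$^*$-convergence in $X^*$. For (ii), neo-unitality lets me factor each $x\in X$ as $a\cdot y\cdot b$, whence $\langle\xi-\eta,x\rangle = \langle b\cdot(\xi-\eta)\cdot a,y\rangle = 0$ by \eqref{annihilation}, forcing $\xi = \eta$ in $X^*$ and again $D = {\rm ad}_\xi$.

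Case (iii) is the substantive one, since now the module need not be neo-unital; the plan is to combine the BAI with weak$^*$-compactness rather than forcing $\xi - \eta$ to vanish. I would set $\zeta_\alpha := \xi - (\xi-\eta)\cdot e_\alpha$, a bounded net in $X^*$ because $(e_\alpha)$ is bounded, and show directly from \eqref{annihilation} that $a\cdot\zeta_\alpha = a\cdot\xi$ while $\zeta_\alpha\cdot a = \xi\cdot a - (\xi-\eta)\cdot(e_\alpha a)$, so that
\[
a\cdot\zeta_\alpha - \zeta_\alpha\cdot a = (a\cdot\xi - \xi\cdot a) + (\xi-\eta)\cdot(e_\alpha a) \longrightarrow D(a)
\]
in norm as $e_\alpha a \to a$. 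Passing to a weak$^*$-cluster point $\zeta$ of $(\zeta_\alpha)$ (Banach--Alaoglu) and exploiting weak$^*$-continuity of the bimodule actions on $X^*$ (they are adjoints of the actions on $X$), I expect to conclude $a\cdot\zeta - \zeta\cdot a = D(a)$ for all $a$, i.e., $D = {\rm ad}_\zeta$.

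For the concluding statement, my plan is first to show that any semi-amenable Banach algebra must possess a BAI, by adapting the classical Johnson-type argument from the amenable case: applying semi-amenability to a suitably chosen auxiliary Banach $A$-bimodule produces, through its associated semi-inner derivation, the required BAI. Once this is available, case (iii) applies to every continuous derivation $D:A\to X^*$ and delivers amenability of $A$. The hard part will be case (iii), where the norm-convergence $a\cdot\zeta_\alpha - \zeta_\alpha\cdot a \to D(a)$ has to be reconciled with the weaker weak$^*$-compactness used to extract $\zeta$; for the final assertion, the nontrivial step is the BAI lemma, since it does not follow formally from (i)--(iii) and requires its own module-theoretic construction.
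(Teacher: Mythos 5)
Your proposal is correct in all parts, and since the paper states Proposition \ref{equality1} only by citation to \cite{GhaLoy2} (no proof is reproduced), your self-contained argument is a legitimate substitute rather than a deviation from an in-text proof. In (i) the identity \eqref{annihilation} indeed forces $(\xi-\eta)\cdot a=0$ (norm limit against a left/right approximate identity for $X$; weak$^{*}$ limit in the dual case, since the one-sided approximate identities for $X$ act weak$^{*}$-approximately on $X^{*}$), and in (ii) neo-unitality kills $\xi-\eta$ outright. In (iii) your correction $\zeta_{\alpha}=\xi-(\xi-\eta)\cdot e_{\alpha}$ is bounded, satisfies $a\cdot\zeta_{\alpha}=a\cdot\xi$ and $\zeta_{\alpha}\cdot a\to\eta\cdot a$ in norm, and the tension you flag between norm convergence and weak$^{*}$ compactness is not a real obstacle: along a weak$^{*}$-convergent subnet the commutators converge weak$^{*}$ to $a\cdot\zeta-\zeta\cdot a$ (the module actions on $X^{*}$ are adjoints, hence weak$^{*}$-continuous) and in norm to $D(a)$, and norm limits are weak$^{*}$ limits, so $D={\rm ad}_{\zeta}$. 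For the final assertion, your ``BAI lemma'' is exactly the standard argument the paper alludes to after Theorem \ref{parallel}: take $X=A$ with the trivial action on one side, so that $X^{**}$ is a dual module in which one of the two implementing elements of a semi-inner derivation acts trivially; applying this to the canonical embedding $A\to A^{**}$ gives $\hat a=a\cdot\xi$ (resp.\ $\hat a=-\eta\cdot a$), and Goldstine plus Mazur yield bounded one-sided approximate identities, which combine to a two-sided \BAI; then your case (iii) gives amenability. The only stylistic remark is that (iii) could also be deduced from (ii) by Johnson's reduction to the neo-unital part of $X$, which is the route usually taken in the literature, but your direct correction argument is shorter and equally valid.
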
\hfill $\Box$
  
\begin{definition}
For a Banach algebra $A$ and a  Banach $A$-bimodule $X$, a continuous derivation  $D:A\to X$ is \emph{approximately semi-inner} if there are nets $(\xi_i), (\eta_i)$ in $X$ with
\begin{equation}\label{asi}
D(a) = \lim_i(a\cdot \xi_i - \eta_i\cdot a) \qquad (a\in A)\,,
\end{equation}
that is, $D$ is the limit in the strong operator topology of the net $({\rm ad}_{\xi_{i}, \eta_{i}} )$.
\end{definition}

 Note that in distinction to when a derivation is approximately inner, the approximating operators here are not \emph{a priori} supposed to be derivations.  However, if
\begin{equation}\label{appform}
D(a)= \lim_i (a\cdot \xi'_i - \eta'_i\cdot a)\qquad (a\in A)\,,
 \end{equation}
and $D$ is a  derivation, then the product rule yields
a parallel to \eqref{annihilation}:
\begin{equation}\label{limits}
    \lim_i(a\cdot(\xi'_i - \eta'_i)\cdot b) = 0\qquad (a, b\in A)\,.    
\end{equation}
 In considering approximately semi-inner derivations, both \eqref{appform} and \eqref{limits}  need to be taken into account.
\oomit{ \begin{proposition}\label{identity}
 Let $A$ be a Banach algebra with identity. Then any approximately semi-inner derivation from $A$ into a Banach $A$-bimodule is approximately inner.
 \end{proposition}
 \begin{proof}
 Suppose  that $A$ has an identity $e$ and  $D:A\to X$ is a derivation into a Banach $A$-bimodule $X$. The usual decomposition, \cite[Lemma 2.3]{GhaLoy},  shows that after adjusting by inner derivations, we may suppose that $D$ maps into $e\cdot X\cdot e$.  Supposing that $D$ is approximately semi-inner, there are nets $(\xi_{i}), (\eta_{i})$ in $e\cdot X\cdot e$ such that for $a, b\in A$,  $D(a)=\lim{i}\xi_{i}(a\cdot \xi_{i} - \eta_{i}\cdot a)$ and $\lim{i}\xi_{i} a\cdot(\xi_{i} - \eta_{i})\cdot b= 0$.   Putting $a =b = e$ we have $e\cdot (\xi_{i} - \eta_{i})\cdot e\to 0$, that is, $\xi_{i} -\eta_{i}\to 0$.  Thus $D= \lim{i}\xi_{i}{\rm ad}_{\xi_{i}}$ is approximately inner. 
 \end{proof} 
}

\begin{proposition}\label{cbaistrong}
 Let $A$ be a Banach algebra with  a central \BAI.  Then any approximately semi-inner derivation from $A$ into a Banach $A$-bimodule is approximately inner.  In particular, the result holds in the case when $A$ is unital.

\end{proposition}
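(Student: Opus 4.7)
The plan is to combine the data $(\xi_i), (\eta_i)$ with the central \BAI\ $(e_\alpha)$ (of bound $M$, say) into the two-parameter family
\[
\zeta_{i,\alpha} := e_\alpha\cdot\xi_i + \eta_i\cdot e_\alpha - e_\alpha\cdot\xi_i\cdot e_\alpha,
\]
and then extract an approximately inner net by diagonalization. The expression is modelled on the operator $x\mapsto e_\alpha\cdot x + x\cdot e_\alpha - e_\alpha\cdot x\cdot e_\alpha$, which formally would be the identity on $X$ if each $e_\alpha$ acted as a two-sided unit.

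Write $R_i := a\cdot\xi_i - \eta_i\cdot a$ and $S_i := \xi_i - \eta_i$, so that $a\cdot\xi_i - \xi_i\cdot a = R_i - S_i\cdot a$ and $a\cdot\eta_i - \eta_i\cdot a = R_i - a\cdot S_i$. The identities $a\cdot(e_\alpha\cdot x) = e_\alpha\cdot(a\cdot x)$ and $(x\cdot e_\alpha)\cdot a = (x\cdot a)\cdot e_\alpha$, which follow from $ae_\alpha = e_\alpha a$, expand the commutator $a\cdot\zeta_{i,\alpha} - \zeta_{i,\alpha}\cdot a$ to
\[
e_\alpha\cdot R_i + R_i\cdot e_\alpha - e_\alpha\cdot R_i\cdot e_\alpha - e_\alpha\cdot S_i\cdot a - a\cdot S_i\cdot e_\alpha + e_\alpha\cdot S_i\cdot a\cdot e_\alpha.
\]
With $\alpha$ fixed, letting $i\to\infty$ drives the three $R_i$-terms to $e_\alpha\cdot D(a) + D(a)\cdot e_\alpha - e_\alpha\cdot D(a)\cdot e_\alpha$ by \eqref{appform}; the three $S_i$-terms, each of the form $c\cdot S_i\cdot d$ with $c, d\in A$ (the last after one more use of centrality), vanish by \eqref{limits}.

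The main obstacle is then the $\alpha$-limit: showing that $e_\alpha\cdot D(a) + D(a)\cdot e_\alpha - e_\alpha\cdot D(a)\cdot e_\alpha \to D(a)$. Centrality is used again, in combination with the derivation identity. From $D(ae_\alpha) = D(e_\alpha a)$ one reads off $e_\alpha\cdot D(a) = D(ae_\alpha) - D(e_\alpha)\cdot a$ and $D(a)\cdot e_\alpha = D(ae_\alpha) - a\cdot D(e_\alpha)$; expanding $D(e_\alpha a e_\alpha) = D(ae_\alpha^2)$ by the derivation rule and invoking centrality once more gives $e_\alpha\cdot D(a)\cdot e_\alpha = D(ae_\alpha^2) - e_\alpha a\cdot D(e_\alpha) - D(e_\alpha)\cdot e_\alpha a$. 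Assembling these identities yields
\[
D(a) - \bigl[e_\alpha\cdot D(a) + D(a)\cdot e_\alpha - e_\alpha\cdot D(a)\cdot e_\alpha\bigr] = \bigl[D(a)-2D(ae_\alpha)+D(ae_\alpha^2)\bigr] + D(e_\alpha)\cdot(a - e_\alpha a) + (a - e_\alpha a)\cdot D(e_\alpha).
\]
Since $\|ae_\alpha - a\| \to 0$ and $\|ae_\alpha^2 - a\| \leq (M+1)\|ae_\alpha - a\| \to 0$, continuity of $D$ makes the first bracket vanish, and since $\|D(e_\alpha)\| \leq M\|D\|$ while $\|a - e_\alpha a\|\to 0$, the two remaining terms vanish in norm.

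A routine iterated-limit argument now reindexes $(\zeta_{i,\alpha})$ as a single net $(\zeta_\beta)$ satisfying $D(a) = \lim_\beta (a\cdot\zeta_\beta - \zeta_\beta\cdot a)$ for every $a\in A$, exhibiting $D$ as approximately inner. The unital case is the specialization to the one-element \BAI\ $\{e\}$.
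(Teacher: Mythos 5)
Your proof is correct, but it follows a genuinely different route from the paper's. The paper passes to the bidual: it takes weak$^{*}$-operator limit points $E$, $F$ of left and right multiplication by the central \BAI\ on $X^{**}$, splits $X^{**}$ as in \eqref{decomp411}, disposes of the pieces $D_{2}$, $D_{3}$ (where one action is trivial) as inner, reduces $D_{1}$ to a derivation into the dual of the essential module $X^{*}_{ess}$, and then recovers an approximating net in $X$ itself by the finite-subsets argument combined with Goldstine and Hahn--Banach. You instead stay entirely inside $X$: the two-parameter family $\zeta_{i,\alpha}=e_\alpha\cdot\xi_i+\eta_i\cdot e_\alpha-e_\alpha\cdot\xi_i\cdot e_\alpha$ is chosen so that, after expanding with centrality, the error terms are exactly of the sandwiched form $c\cdot(\xi_i-\eta_i)\cdot d$ with $c,d\in A$, which die by \eqref{limits}, while the main terms converge by \eqref{appform}; and the key step that replaces the paper's module decomposition is your algebraic identity, obtained from the derivation rule plus centrality, showing $e_\alpha\cdot D(a)+D(a)\cdot e_\alpha-e_\alpha\cdot D(a)\cdot e_\alpha\to D(a)$ in norm (I checked the identity; it is exact, and boundedness of the \BAI\ enters precisely through $\|D(e_\alpha)\|\leq M\|D\|$ and $\|ae_\alpha^{2}-a\|\leq(M+1)\|ae_\alpha-a\|$). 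The final reindexing over pairs (finite subset of $A$, $\varepsilon$) is indeed routine. What your approach buys: it is more elementary and self-contained (no bidual, no weak$^{*}$ compactness, no Goldstine/Hahn--Banach), it produces explicit implementing elements, and it isolates exactly where centrality and boundedness of the approximate identity are used; the paper's argument, by contrast, is the standard Johnson-type reduction, which is shorter to write given that machinery and is the template reused elsewhere in the paper (e.g.\ in the proof of Theorem \ref{equiv}).
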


\begin{proof}

Let $X$ be a general Banach $A$-bimodule, $D:A\to X$ a continuous derivation. For the moment consider $D:A\to X^{**}$.  Let $(e_\alpha)$ be a central  \BAI\ for $A$.  By Cohen's factorization theorem, $X^{*}_{ess} = A\cdot X^{*}\cdot A$ is a neo-unital $A$-bimodule.  Let $E$ be a limit point in the weak$^*$-operator topology of the left multiplication operators on $X^{**}$ by the elements $e_{\alpha}$, $F$ similarly for right multiplication.  Then $E$ and $F$ are commuting $A$-bimodule morphisms of  $X^{**}$ into itself, and give  a decomposition
\begin{equation}\label{decomp411}
X^{**} = EFX^{**} \oplus E(I-F)X^{**} \oplus (I-E)X^{**}\,.
\end{equation}
Correspondingly, set
$$
D_1 = EFD, D_2 = E(I-F)D, D_3 = (I-E)D\,.
$$
These are easily seen to be derivations into the corresponding summands in \eqref{decomp411}.  Since $A$ has trivial action on the right of $E(I-F)X^{**}$, and has a \BAI, we conclude that $D_{2}$ is approximately inner, with a bounded net of implementing elements (whence $D_{2}$ is inner).  Similarly for $D_{3}$.

Thus we may reduce to $D_1$ mapping into the bimodule $EFX^{**}$ which can be viewed as the dual module of the 
 module $X^{*}_{ess}$. 

Now suppose that $D$ is approximately semi-inner. We have nets $(\xi_{j}), (\eta_{j})$ in $X$ such that for any $a, b\in A$, 
\begin{equation}\label{nets1}
D(a) = \lim_{j}(a\cdot  \xi_{j} - \eta_{j}\cdot a), \qquad a\cdot (\xi_{j} - \eta_{j})\cdot b\to 0\,.
\end{equation}

Then for $a\in A$,
\begin{eqnarray}
D_{1}(a) &=& (w^{*}-\lim_{\alpha})(w^{*} -\lim_{\beta})e_{\alpha}D(a)e_{\beta}\nonumber\\
&=&
(w^{*}-\lim_{\alpha})(w^{*} -\lim_{\beta})\lim_{i}\Big(e_{\alpha}(a\cdot \xi_{i} - \eta_{i}\cdot a)e_{\beta}\Big)\,. \label{Dlimit1}
\end{eqnarray}
 
 Then \eqref{nets1} and \eqref{Dlimit1} give, using centrality of the \BAI, 
 $$
 D_{1}(a)  =(w^{*}-\lim_{\alpha})(w^{*} -\lim_{\beta})\lim_{i}\Big(a\cdot(e_{\alpha}\cdot \xi_{i}\cdot e_{\beta}) - (e_{\alpha}\cdot \xi_{i}\cdot e_{\beta})\cdot a\Big)\,. $$
 By considering finite subsets of $A$ and $X^{*}$ we can find a net $(x^{**}_{\delta}) \subset X^{**}$ such that 
 $$ D_{1}(a) = (w^{*}-\lim_{\delta})  (a\cdot x^{**}_{\delta} - x^{**}_{\delta}\cdot a) \ \ \ (a\in A). $$
 Since $D_{2}$ and $D_{3}$ are approximately inner we deduce that $D$ is weak$^{*}$-approximately inner.
 Again the standard method of considering finite subsets of $A$ and $X^{*}$ together with Goldstine theorem and a version of Hahn-Banach theorem, give a net $(x_{\gamma})\subset X$ such that
 $$
 D(a) = \lim_{\gamma} (a\cdot x_{\gamma} - x_{\gamma}\cdot a)\,, \qquad (a\in A)\,
 $$ and the proof is complete. \end{proof}
   
 We make the following definitions.  Each notion is clearly stronger than the one following it; in fact all three will turn out to be equivalent.
\begin{definition}\label{variants}
The Banach algebra $A$ is 
\begin{itemize}
 \item \emph{approximately semi-contractible} if for any Banach $A$-bimodule $X$, any continuous derivation $D :A\to X$ is approximately semi-inner.
\item \emph{approximately semi-amenable} if for any Banach $A$-bimodule $X$, any continuous derivation $D :A\to X^{*}$ is approximately semi-inner.
 \item \emph{weak$^{*}$-approximately semi-amenable} if for any Banach $A$-bimodule $X$, any continuous derivation $D :A\to X^{*}$ can be approximated weak$^{*}$ by semi-inner mappings.
\end{itemize}
\end{definition}
 
 \begin{theorem}\label{semicont}
The Banach algebra $A$ is approximately semi-contractible if and only if there are nets  $(\xi_i)$ and $(\eta_i)$ in $A^{\#}\tensor  A^{\#}$ such that, with $e$  the adjoined identity,
\begin{enumerate}
\item $a\cdot\xi_i-\eta_{j} \cdot a \to 0\,,\quad (a\in A)$\,,
\item $ \pi(\xi_i) \to e$ and $ \pi(\eta_i) \to e$\,,
\end{enumerate}
\end{theorem}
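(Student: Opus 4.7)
The plan is to mimic the classical Johnson-style characterization of amenability via approximate diagonals, adapted to the asymmetric ``two net'' setting that semi-inner derivations dictate.

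For the forward direction, view $A^{\#}\tensor A^{\#}$ as an $A$-bimodule in the standard way and let $\pi:A^{\#}\tensor A^{\#}\to A^{\#}$ be the product map, which is a bimodule morphism, so $\ker\pi$ is a submodule. The universal test derivation to consider is
$$
\delta:A\to \ker\pi,\qquad \delta(a)=a\otimes e - e\otimes a,
$$
easily verified to be a continuous derivation. Applying approximate semi-contractibility of $A$ to $\delta$ yields nets $(u_i),(v_i)\subset \ker\pi$ with $a\cdot u_i - v_i\cdot a\to \delta(a)$ for every $a\in A$. Setting $\xi_i:=e\otimes e - u_i$ and $\eta_i:=e\otimes e - v_i$, the conclusions drop out at once: $\pi(\xi_i)=\pi(\eta_i)=e$, and $a\cdot\xi_i-\eta_i\cdot a = \delta(a) - (a\cdot u_i - v_i\cdot a)\to 0$.

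For the converse, suppose such nets exist and let $D:A\to X$ be any continuous derivation into a Banach $A$-bimodule. Extend $X$ to an $A^{\#}$-bimodule with $e$ acting as identity, and extend $D$ to $A^{\#}$ by $D(e)=0$; the derivation identity is preserved. By the universal property of the projective tensor product there is a bounded linear map
$$
\sigma_D:A^{\#}\tensor A^{\#}\to X,\qquad \sigma_D(a\otimes b)=a\cdot D(b).
$$
A check on elementary tensors gives the two identities
$$
\sigma_D(a\cdot u)=a\cdot\sigma_D(u),\qquad \sigma_D(u\cdot a)=\sigma_D(u)\cdot a+\pi(u)\cdot D(a).
$$
Put $p_i:=\sigma_D(\xi_i)$, $q_i:=\sigma_D(\eta_i)$. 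Rearranging the second identity as $\sigma_D(\eta_i)\cdot a = \sigma_D(\eta_i\cdot a) - \pi(\eta_i)\cdot D(a)$ gives
$$
a\cdot p_i - q_i\cdot a = \sigma_D(a\cdot\xi_i - \eta_i\cdot a) + \pi(\eta_i)\cdot D(a),
$$
where hypothesis (i) and boundedness of $\sigma_D$ kill the first term, and the second half of (ii) sends the second to $e\cdot D(a)=D(a)$. Hence $D$ is approximately semi-inner.

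The only substantive point is the twisted-right-module identity for $\sigma_D$: the correction term $\pi(u)\cdot D(a)$ is precisely why the condition $\pi(\eta_i)\to e$ appears. By symmetry the dual map $\tau_D(a\otimes b)=D(a)\cdot b$ could absorb the companion condition $\pi(\xi_i)\to e$ instead; the theorem as stated collects both since the forward direction supplies them simultaneously. Everything else is bookkeeping.
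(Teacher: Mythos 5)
Your proposal is correct and follows essentially the same route as the paper: the forward direction applies approximate semi-contractibility to the derivation $a\mapsto a\otimes e-e\otimes a$ viewed as mapping into $\ker\pi$ and shifts by $e\otimes e$, and the converse uses the map $a\otimes b\mapsto a\cdot D(b)$ with the twisted right-module identity, exactly as in the paper (your observation that only $\pi(\eta_i)\to e$ is needed there matches the paper's remark that clause 2 can even be taken as equality).
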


\begin{proof}  Suppose that $A$ is approximately semi-contractible. Consider the derivation $D:A\to A^{\#}\tensor  A^{\#}$ given by $D(a) = a\otimes e - e\otimes a$.  $D$ maps into ${\rm ker}\,\pi$, so there exist $(\xi_i')$, $(n_i')$ in ${\rm ker}\,\pi$, such that $D = \lim_i {\rm ad}_{\xi_i', n_i'}$ in the strong operator topology. Set $\xi_i= e\otimes e -\xi_i'$, $\eta_i = e\otimes e -\eta_i'$. Then for $a\in A$,
$$
a\cdot \xi_i - \eta_i\cdot a =a\otimes e - a\cdot \xi_i' -e\otimes a +n_i'\cdot a \to 0\,,
$$
and $\pi(\xi_i) = \pi(\eta_i) = e$.    (From \eqref{limits}, $a\cdot (\xi_{i}' - \eta_{i}')\cdot b \to 0$ for $a, b\in A$, so  we also have  $ a\cdot(\xi_i - \eta_i)\cdot b \to 0$ for $a, b\in A$.)


Conversely, suppose $(\xi_i)$ and $(\eta_i)$ satisfy clauses 1 and 2. Let $D:A\to X$ be a continuous derivation into a Banach $A$-bimodule $X$. Setting $ e\cdot x = x\cdot e = x$ for $x\in X$ makes $X$ into a Banach $A^{\#}$-bimodule.  Extend $D$ to $A^{\#}$ by setting $D(e) = 0$.  Define $\varphi:A^{\#}\tensor A^{\#}\to X$ by $\varphi(a\otimes b) = aD(b)$.

\

Then for $
\xi = \sum_k a_k\otimes b_k$, with $\pi(\xi) = e$,
\begin{eqnarray*}
&&\varphi(\xi\cdot a) = \varphi(\sum a_k\otimes b_ka) =\sum a_k D(b_k a) \cdot a + \pi(\xi) D(a)= \varphi(\xi)\cdot a +D(a)\\
 &&\kern -2em \textrm{and}\\ 
&&\varphi(a\cdot \xi) =a\cdot \xi\,.
\end{eqnarray*}
Thus
$$
\varphi(a\cdot \xi_i) -\varphi(\eta_i \cdot a) = a\cdot\varphi(\xi_i)- \varphi(\eta_i)\cdot a - \pi(\xi_{i})\cdot D(a) \to 0\,.
$$
But LHS $\to 0$ by hypothesis, whence
$$
D(a) = \lim_i \big(a\cdot \varphi(\xi_i) - \varphi(\eta_i)\cdot a\big)\,.
$$
\end{proof}

\begin{remark}\label{eqapprox}  The above argument is standard, the details have been included as they show that we may replace clause 2 above by the equality  $ \pi(\xi_i) = \pi(\eta_i)= e$\,.  
Analogously, the same holds in the next two results, so these are stated with the stronger clause 2.
\end{remark}

Thanks to Theorem \ref{equiv} below, Theorem \ref{semicont} and the next two results are giving different formulations of the same concept.  The proofs are minor variants of those of Theorem \ref{semicont}, making use of the natural projection $X^{***}\to X^{*}$
for any Banach space $X$.

 \begin{theorem}\label{semiamen}
The Banach algebra $A$ is approximately semi-amenable if and only if there are nets  $(\xi_i)$ and $(\eta_i)$ in $(A^{\#}\tensor  A^{\#})^{**}$ such that, with $e$  the adjoined identity, and $\pi: A^{\#}\tensor A^{\#}\to A^{\#}$ the product map,
\begin{enumerate}
\item  $a\cdot \xi_i - \eta_i \cdot a \to 0\,,\quad (a\in A)$\,,
\item $ \pi^{**}(\xi_i) = \pi^{**}(\eta_i) = e\,.$
\end{enumerate}  \vskip -\baselineskip\vskip -2mm  \qed

\end{theorem}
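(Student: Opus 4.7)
The plan is to mirror the proof of Theorem~\ref{semicont}, with two modifications suited to the dual setting: pass to the bidual $(A^{\#}\tensor A^{\#})^{**}$ so that the relevant target module carries a natural dual-module structure, and use the canonical contractive $A$-bimodule projection $P: X^{***}\to X^*$ (the adjoint of the inclusion $X\hookrightarrow X^{**}$) to push bidual constructions down to $X^*$.

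For the necessity direction, I would apply approximate semi-amenability to the canonical derivation $D_0: A\to \ker\pi$, $D_0(a)=a\otimes e-e\otimes a$, composed with the natural embedding $\ker\pi\hookrightarrow(\ker\pi)^{**}$. Since $(\ker\pi)^{**}$ is a dual Banach $A$-bimodule (as the dual of $(\ker\pi)^*$), the hypothesis yields nets $(\xi_i'),(\eta_i')$ in $(\ker\pi)^{**}$ with $D_0(a)=\lim_i(a\cdot\xi_i'-\eta_i'\cdot a)$. Identifying $(\ker\pi)^{**}$ with $\ker\pi^{**}\subset(A^{\#}\tensor A^{\#})^{**}$ via the second adjoint of the inclusion, and identifying $e\otimes e$ with its image in the bidual, set $\xi_i=e\otimes e-\xi_i'$ and $\eta_i=e\otimes e-\eta_i'$. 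Clause~1 follows by direct computation exactly as in the proof of Theorem~\ref{semicont}, and since $\pi^{**}$ vanishes on $\ker\pi^{**}$, one has $\pi^{**}(\xi_i)=\pi^{**}(\eta_i)=e$.

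For the sufficiency, let $D:A\to X^*$ be a continuous derivation. Extend $D$ to $A^{\#}$ by $D(e)=0$ and define $\varphi:A^{\#}\tensor A^{\#}\to X^*$ by $\varphi(a\otimes b)=a\cdot D(b)$, exactly as in Theorem~\ref{semicont}. I then define $\widetilde\varphi:=P\circ\varphi^{**}:(A^{\#}\tensor A^{\#})^{**}\to X^*$. The identities $\varphi(a\cdot\xi)=a\cdot\varphi(\xi)$ and $\varphi(\xi\cdot a)=\varphi(\xi)\cdot a+\pi(\xi)\cdot D(a)$ from Theorem~\ref{semicont} should lift to corresponding identities for $\widetilde\varphi$ on $(A^{\#}\tensor A^{\#})^{**}$, because $\varphi^{**}$ is an $A$-bimodule map for the restricted $A$-actions and $P$ is an $A$-bimodule map. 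Evaluating at the nets $\xi_i,\eta_i$ with $\pi^{**}(\xi_i)=\pi^{**}(\eta_i)=e$ and using clause~1 then yields
\[
D(a)=\lim_i\bigl(a\cdot\widetilde\varphi(\xi_i)-\widetilde\varphi(\eta_i)\cdot a\bigr),
\]
proving $D$ is approximately semi-inner.

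The main obstacle I anticipate is the Arens-type bookkeeping at the bidual in the sufficiency direction: one needs to verify, with care about the appropriate one-sided weak$^*$-continuity, that $\varphi^{**}(\xi\cdot a)=\varphi^{**}(\xi)\cdot a+\pi^{**}(\xi)\cdot D(a)$ for $\xi\in(A^{\#}\tensor A^{\#})^{**}$, and then to confirm that $P$ intertwines the $A$-actions so that the identity descends to $\widetilde\varphi$ with values in $X^*$. Once these technical checks are performed, the rest of the argument is a direct transposition of the proof of Theorem~\ref{semicont}.
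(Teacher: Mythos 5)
Your proposal is correct and is essentially the paper's own argument: the paper treats this as a minor variant of Theorem \ref{semicont}, lifting the map $\varphi(a\otimes b)=a\cdot D(b)$ to $\varphi^{**}$ on $(A^{\#}\tensor A^{\#})^{**}$ and descending with the natural projection $X^{***}\to X^{*}$ exactly as you do, and your necessity direction via the derivation $a\mapsto a\otimes e-e\otimes a$ into $(\ker\pi)^{**}\cong\ker\pi^{**}$ is the intended adaptation (it also yields the equality in clause 2, as in Remark \ref{eqapprox}). The technical checks you flag are routine, since only the actions of fixed elements of $A$ (second adjoints of bounded maps, hence weak$^{*}$-continuous) and the case $\pi^{**}(\xi_i)=e$ are needed, so no genuine Arens-product ambiguity arises.
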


\begin{remark}
This result immediately shows that in the unital  case \ApsAy\ implies \ApAy, a weak form of Theorem \ref{semi+baa=approx} below.

\end{remark}
\oomit{
\begin{proof}  Suppose that $A$ is approximately semi-amenable. The same argument as in Theorem \ref{semicont} works here.

Conversely, arguing as before for a derivation $D:A\to X^{*}$, we get, for ${m\in A^{\#}\tensor A^{\#}}$, $a\in A$,
$$
\varphi(m\cdot a)  = \varphi(m)\cdot a +\pi(m)D(a)\,, \quad  \varphi(a\cdot m) =a\cdot m\,.
$$
It follows that for $M\in (A^{\#}\tensor A^{\#})^{**}$ with $\pi^{**}(M) = e$, $a\in A$, and with ${j:X^{*}\to X^{***}}$ the natural embedding,
$$
\varphi^{**}(M\cdot a)  = \varphi^{**}(M)\cdot a +jD(a)\,, \quad  \varphi^{**}(a\cdot M) =a\cdot M\,.
$$
Thus
\marginpar{\bf \color{red}{proof could go}}
$$
\varphi^{**}(a\cdot \xi_i) -\varphi^{**}(N_i \cdot a) = a\cdot\varphi^{**}(\xi_i)- \varphi(N_i)\cdot a - jD(a) \to 0\,.
$$
But LHS $\to 0$ by hypothesis, whence
$$
jD(a) = \lim{i}\xi_i \big(a\cdot \varphi^{**}(\xi_i) - \varphi^{**}(N_i)\cdot a\big)\,.
$$

Applying the natural projection $Q:X^{***}\to X^{*}$, we obtain
$$
D(a) = \lim{i}\xi_i \big(a\cdot Q(\varphi^{**}(\xi_i)) - Q(\varphi^{**}(N_i))\cdot a\big)\,.
$$

\end{proof}
}

 \begin{theorem}\label{semiweak}
The Banach algebra $A$ is weak$^{*}$-approximately semi-amenable if and only if there are nets  $(\xi_i)$ and $(\eta_i)$ in $(A^{\#}\tensor  A^{\#})^{**}$ such that, with $e$  the adjoined identity,
\begin{enumerate}
\item  $a\cdot \xi_i - \eta_i \cdot a \to 0\ {\rm weak}^{*}\,,\quad (a\in A)$\,,
\item $ \pi^{**}(\xi_i) = \pi^{**}(\eta_i) = e\,.$
\end{enumerate} \vskip -\baselineskip\vskip -2mm  \qed
\end{theorem}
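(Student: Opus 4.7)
The plan is to follow the template of Theorem \ref{semicont} with two modifications: the ambient module becomes the bidual $(A^{\#}\tensor A^{\#})^{**}$, strong-operator convergence is replaced by weak$^{*}$-convergence, and we exploit the natural projection $Q:X^{***}\to X^{*}$ to move from third duals back to duals.

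For necessity, I would apply weak$^{*}$-approximate semi-amenability to the continuous derivation $D_0:A\to \ker \pi^{**}$ given by $D_0(a)=\iota(a\otimes e-e\otimes a)$, where $\iota:A^{\#}\tensor A^{\#}\hookrightarrow (A^{\#}\tensor A^{\#})^{**}$ is the canonical embedding. The subspace $\ker\pi^{**}$ is weak$^{*}$-closed in $(A^{\#}\tensor A^{\#})^{**}$ and identifies with the dual of $(A^{\#}\tensor A^{\#})^{*}/\pi^{*}((A^{\#})^{*})$, so it is itself a dual Banach $A$-bimodule. The hypothesis then supplies nets $(\xi_i'),(\eta_i')\in \ker \pi^{**}$ with $D_0(a)=w^{*}\text{-}\lim_i(a\cdot \xi_i'-\eta_i'\cdot a)$; setting $\xi_i=\iota(e\otimes e)-\xi_i'$ and $\eta_i=\iota(e\otimes e)-\eta_i'$ yields clause 1 by expansion, and clause 2 as an exact equality because $\pi^{**}(\xi_i')=\pi^{**}(\eta_i')=0$.

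For sufficiency, given a continuous derivation $D:A\to X^{*}$, I would extend $X$ to a Banach $A^{\#}$-bimodule by setting $e\cdot x=x\cdot e=x$, extend $D$ to $A^{\#}$ via $D(e)=0$, and define $\varphi:A^{\#}\tensor A^{\#}\to X^{*}$ by $\varphi(a\otimes b)=a\cdot D(b)$. The calculation in Theorem \ref{semicont} gives
\[\varphi(\xi\cdot a)=\varphi(\xi)\cdot a+\pi(\xi)\cdot D(a),\qquad \varphi(a\cdot \xi)=a\cdot \varphi(\xi).\]
Passing to second adjoints and writing $j:X^{*}\to X^{***}$ for the canonical embedding, these lift to: for every $M\in (A^{\#}\tensor A^{\#})^{**}$ with $\pi^{**}(M)=e$,
\[\varphi^{**}(M\cdot a)=\varphi^{**}(M)\cdot a+jD(a),\qquad \varphi^{**}(a\cdot M)=a\cdot \varphi^{**}(M).\]
Applying these to the given $(\xi_i),(\eta_i)$ and using weak$^{*}$-to-weak$^{*}$ continuity of $\varphi^{**}$, clause 1 forces $jD(a)=w^{*}\text{-}\lim_i\bigl(a\cdot \varphi^{**}(\xi_i)-\varphi^{**}(\eta_i)\cdot a\bigr)$; composing with the weak$^{*}$-continuous bimodule projection $Q:X^{***}\to X^{*}$ then presents $D$ as a weak$^{*}$-limit of semi-inner maps with implementing elements $Q\varphi^{**}(\xi_i), Q\varphi^{**}(\eta_i)\in X^{*}$.

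The main technical point to watch will be the correct handling of the Arens products when lifting the two module identities from $A^{\#}\tensor A^{\#}$ to its bidual; one has to choose the Arens product in each variable so that the relevant separate weak$^{*}$-continuity of $\varphi^{**}$ is available. Once this bookkeeping is set up with the right conventions, the proof is a direct weak$^{*}$-topological transcription of the argument for Theorem \ref{semicont}.
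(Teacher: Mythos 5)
Your proposal is correct and takes essentially the same route as the paper: its proof is the Theorem \ref{semicont} argument transcribed to $(A^{\#}\tensor A^{\#})^{**}$, using the derivation $a\mapsto a\otimes e-e\otimes a$ into the dual module $\ker\pi^{**}$ for necessity, and for sufficiency lifting the two module identities for $\varphi$ to $\varphi^{**}$ and composing with the natural projection $Q:X^{***}\to X^{*}$, exactly as you do. The Arens bookkeeping you flag is harmless, since the only actions being extended are by fixed elements of $A$, for which both extensions coincide and are weak$^{*}$-continuous.
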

\oomit{
\begin{proof}  Suppose that $A$ is weak$^{*}$ approximately semi-amenable. The same argument as in Theorem \ref{semicont} works again.

Conversely, arguing as before for a derivation $D:A\to X^{*}$ we get, with $\varphi$ and $Q$ as in Theorem \ref{semiamen}, in the weak$^{*}$-topology in $X^{***}$,
$$
jD(a) = \text{weak}^{*}-\lim{i}\xi_i \big(a\cdot \varphi^{**}(\xi_i) - \varphi^{**}(N_i)\cdot a\big)\,,
$$
whence
\marginpar{\bf \color{red}{proof could go}}
$$
D(a) = {\rm weak}-\lim{i}\xi_i \big(a\cdot Q(\varphi^{**}(\xi_i)) - Q(\varphi^{**}(N_i))\cdot a\big)\,.
$$
And so certainly,
$$
D(a) = \text{weak}^{*}-\lim{i}\xi_i \big(a\cdot Q(\varphi^{**}(\xi_i)) - Q(\varphi^{**}(N_i))\cdot a\big)\,.
$$
\end{proof}
}


 
 We now have  the following parallel  to \cite[Theorem 2.1]{GhaLoyZh}. It shows that the variants of Definition \ref{variants} are in fact the same, and its proof uses an argument somewhat similar to the argument  of Proposition \ref {cbaistrong}.
 
\begin{theorem}\label{equiv} 
For a Banach algebra $A$, the following are equivalent:
\begin{enumerate}
\item[(i)] $A$ is approximately semi-contractible,
\item[(ii)]  $A$ is approximately semi-amenable,
\item[(iii)]  $A$ is weak$^*$ approximately semi-amenable.
\end{enumerate}  \vskip -\baselineskip\vskip -2mm  \qed
\end{theorem}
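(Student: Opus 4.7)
The plan is to prove only (iii) $\Rightarrow$ (i), since (i) $\Rightarrow$ (ii) $\Rightarrow$ (iii) is immediate from the definitions (a dual module is in particular a module, and norm convergence implies weak$^{*}$ convergence). The strategy is to invoke the three characterizations given by Theorems \ref{semicont}, \ref{semiamen}, and \ref{semiweak}: (iii) supplies nets in $(A^{\#}\tensor A^{\#})^{**}$ with weak$^{*}$ convergence, and the task is to produce nets in $A^{\#}\tensor A^{\#}$ with norm convergence of the kind Theorem \ref{semicont} requires. This will parallel \cite[Theorem 2.1]{GhaLoyZh} and is in the spirit of the proof of Proposition \ref{cbaistrong}.

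I would begin by applying Theorem \ref{semiweak} to obtain nets $(\xi_i), (\eta_i) \subset (A^{\#}\tensor A^{\#})^{**}$ with $\pi^{**}(\xi_i)=\pi^{**}(\eta_i)=e$ and $a\cdot \xi_i - \eta_i\cdot a \to 0$ weak$^{*}$ for each $a\in A$. Writing $Y = A^{\#}\tensor A^{\#}$ and $S = \pi^{-1}(e)$, the set $S$ is an affine (hence convex) subset of $Y$. The first technical step is to verify, by a Hahn--Banach argument, that $\ker \pi^{**}$ coincides with the weak$^{*}$ closure of $\ker \pi$ in $Y^{**}$: any $\phi \in Y^{*}$ that vanishes on $\ker \pi$ factors as $\psi \circ \pi$ with $\psi \in (A^{\#})^{*}$ and therefore annihilates $\ker \pi^{**}$ as well. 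Translating by $e\otimes e$ then yields that $S$ is weak$^{*}$ dense in $(\pi^{**})^{-1}(e)$.

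Next, for each finite $F = \{a_1, \ldots, a_n\} \subset A$ and each $\varepsilon > 0$, I would introduce the continuous linear map $T: Y\oplus Y \to Y^n$ (the $n$-fold direct sum) defined by $T(\xi, \eta) = (a_k\cdot\xi - \eta\cdot a_k)_{k=1}^n$. Its bidual $T^{**}$ is weak$^{*}$-weak$^{*}$ continuous and has the same formula. Using the density just established together with the weak$^{*}$-weak$^{*}$ continuity of $T^{**}$, the point $0 = \lim_i T^{**}(\xi_i,\eta_i)$ lies in the weak$^{*}$ closure of $T(S\times S)$ inside $(Y^n)^{**}$. Since the restriction of the weak$^{*}$ topology of $(Y^n)^{**}$ to $Y^n$ coincides with the weak topology of $Y^n$, this places $0$ in the weak closure of the affine, hence convex, set $T(S\times S) \subset Y^n$. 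Mazur's theorem upgrades this to the norm closure, giving $\xi_{F,\varepsilon}, \eta_{F,\varepsilon} \in S$ with $\|a_k\cdot \xi_{F,\varepsilon} - \eta_{F,\varepsilon}\cdot a_k\| < \varepsilon$ for $k = 1, \ldots, n$.

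Finally, indexing by the directed set of pairs $(F, \varepsilon)$ produces nets in $S$ satisfying the hypotheses of Theorem \ref{semicont}, so $A$ is approximately semi-contractible. The step I expect to be the main obstacle is the simultaneous requirement of approximating the pair $(\xi_i, \eta_i)$ by elements of $S\times S$ in the weak$^{*}$ topology while preserving the constraint $\pi(\cdot) = e$; the Hahn--Banach identification of $\ker \pi^{**}$ with the weak$^{*}$ closure of $\ker \pi$, combined with the translation trick via $e\otimes e$, is what resolves this.
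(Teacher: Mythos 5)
Your proposal is correct, and it takes essentially the approach the paper intends: the proof is omitted there with a pointer to the parallel argument of \cite[Theorem 2.1]{GhaLoyZh} and to the finite-subset/Goldstine/Hahn--Banach (i.e.\ Mazur-type) weak$^{*}$-to-norm upgrade used at the end of Proposition \ref{cbaistrong}, which is exactly what you carry out on the tensor-product characterizations of Theorems \ref{semicont} and \ref{semiweak}. Your handling of the constraint via the identification of $\ker\pi^{**}$ with the weak$^{*}$ closure of $\ker\pi$ (valid since $\pi$ is surjective, hence has closed range) and the translation by $e\otimes e$ is sound, so the argument goes through.
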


Henceforth we will use the terms \ApsA\ and  \ApsC\ interchangeably. 

\begin{theorem}\label{semi+baa=approx}

In the presence of a central bounded approximate identity, approximate semi-amenability and approximate amenability are the same.\end{theorem}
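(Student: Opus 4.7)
The plan is to observe that the theorem is essentially a direct consequence of Proposition \ref{cbaistrong} together with the trivial implication in the reverse direction. One inclusion is immediate from the definitions: if $A$ is approximately amenable and $D : A \to X^{*}$ is a continuous derivation into a dual Banach $A$-bimodule, then $D = \lim_{i} \mathrm{ad}_{\xi_{i}}$ for some net $(\xi_{i})$ in $X^{*}$, and taking $\eta_{i} := \xi_{i}$ in \eqref{asi} realises $D$ as approximately semi-inner. So $A$ is approximately semi-amenable.

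For the nontrivial direction, assume that $A$ carries a central \BAI\ and is approximately semi-amenable. Let $X$ be any Banach $A$-bimodule and $D : A \to X^{*}$ a continuous derivation. Since $X^{*}$ is itself a Banach $A$-bimodule, the hypothesis yields nets $(\xi_{i}), (\eta_{i})$ in $X^{*}$ such that
\[
D(a) = \lim_{i}\bigl(a\cdot\xi_{i} - \eta_{i}\cdot a\bigr) \qquad (a \in A),
\]
together with the accompanying relation \eqref{limits}. At this point Proposition \ref{cbaistrong} applies verbatim, as it was formulated for derivations into an \emph{arbitrary} Banach $A$-bimodule (not only into a predual), and so $D$ is approximately inner. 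This shows $A$ is approximately amenable.

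No substantive new argument is required; all the real work has already been carried out in Proposition \ref{cbaistrong}, where the decomposition of the second dual via the idempotents $E$ and $F$ arising from weak$^{*}$-limits of left and right multiplication by the central \BAI\ was used to reduce to the neo-unital essential part and then to dispose of the complementary summands. The only point worth flagging explicitly is that the generality of Proposition \ref{cbaistrong} — that the target module need not be a predual — is exactly what permits us to feed the dual module $X^{*}$ into it and conclude approximate innerness of $D$. Thus there is no genuine obstacle, merely a bookkeeping observation combining the definitions with the previously established fact.
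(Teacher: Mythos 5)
Your proof is correct and follows exactly the paper's route: the paper also disposes of the easy direction trivially (an inner approximant is semi-inner with $\eta_i=\xi_i$) and deduces the nontrivial direction by applying Proposition~\ref{cbaistrong} to the dual module $X^{*}$, which is legitimate precisely because that proposition is stated for arbitrary Banach $A$-bimodules. No gaps; your write-up is just a slightly more explicit version of the paper's one-line proof.
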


\begin{proof}This a direct consequence of  Proposition \ref{cbaistrong}. \end{proof}    

 \vskip 2mm
Finally in this section, a couple of results which parallel the \ApA\ case. 

\begin{theorem} \label{parallel}
Suppose that the Banach algebra $A$ is approximately semi-amenable.  Then $A$ has left and right approximate identities.  If $J$ is a closed (two-sided) ideal in $A$ with a bounded approximate identity,  then $J$ is \ApsA\,.
\end{theorem}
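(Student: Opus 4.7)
The plan for the first assertion is to exploit Theorem \ref{equiv}: approximate semi-amenability equals approximate semi-contractibility, so the hypothesis can be tested against bimodules that need not be dual. Take $X := A$ as a Banach $A$-bimodule with left action $a \cdot x := ax$ and trivial right action $x \cdot a := 0$; a direct check shows the module axioms hold and $D(a) := a$ is a continuous derivation $A \to X$. By approximate semi-contractibility one obtains nets $(\xi_i), (\eta_i)$ in $A$ with
\[
a \;=\; D(a) \;=\; \lim_i (a \cdot \xi_i - \eta_i \cdot a) \;=\; \lim_i a\xi_i \qquad (a \in A),
\]
so $(\xi_i)$ is a right approximate identity for $A$. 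Swapping the roles of left and right (trivial left action, right multiplication) yields in the same way a left approximate identity.

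For the second assertion the plan mirrors the familiar extension technique used to pass approximate amenability to ideals with a \BAI. Let $D : J \to Y^*$ be a continuous derivation into a dual Banach $J$-bimodule. First, using the \BAI\ of $J$ and Cohen factorization, reduce to the case where $Y$ is neo-unital as a $J$-bimodule, so $Y = J\cdot Y\cdot J$; the $J$-bimodule structure on $Y$ then extends canonically to a Banach $A$-bimodule structure via $a\cdot(j\cdot y) := (aj)\cdot y$ and its right analogue, and hence so does that of $Y^*$. Next, using the \BAI\ of $J$ again, extend $D$ to a continuous derivation $\tilde D : A \to Y^*$ with $\tilde D|_J = D$ by a standard weak$^*$-limit construction along the \BAI. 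Finally, applying approximate semi-amenability of $A$ to $\tilde D$ produces nets $(\xi_i), (\eta_i)\subset Y^*$ with $\tilde D(a) = \lim_i (a\cdot \xi_i - \eta_i\cdot a)$ for every $a \in A$; restricting to $a\in J$ displays $D$ as approximately semi-inner.

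The main obstacle is the extension step—producing $\tilde D: A \to Y^*$ as a genuine derivation. This depends essentially on the boundedness of the approximate identity, so that $(D(e_\lambda))$ is norm-bounded in $Y^*$ and admits weak$^*$-cluster points, and on the preceding neo-unital reduction so that $Y^*$ carries a well-defined $A$-action in the first place. The restriction step at the end is automatic: the approximating elements $\xi_i, \eta_i$ live in the target $Y^*$, and the $J$-action on $Y^*$ is simply the restriction of the extended $A$-action, so an approximately semi-inner implementation on $A$ immediately restricts to one on $J$.
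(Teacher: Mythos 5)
Your proposal is correct and follows essentially the same route as the paper: the paper's (sketched) proof of the first assertion is exactly your argument with $A$ as a bimodule with trivial action on one side (the reduction to non-dual modules via Theorem \ref{equiv} being implicit), and for the second assertion it likewise invokes the standard Johnson-type argument of passing to a neo-unital module and extending the derivation to $A$ acting as multipliers of $J$, which is what you carry out.
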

\begin{proof}
The first result uses standard argument with $A$ as a Banach $A$-bimodule  with trivial action on one side.  For the second, just use the standard  argument of reducing to the neo-unital case,  and extending to the multiplier algebra.  See \cite{John, GhaLoy}.
\end{proof}

It is the same argument that shows that semi-amenable implies the existence of a  \BAI.
The following strengthens part of Theorem \ref{parallel}, we do not know whether $J$ is always \ApsA.  See Theorem \ref{cai} below for a sufficient condition for this to be the case.
\begin{theorem}\label{proj}
Suppose that $A$ is \ApsA, and that $J$ is a closed complemented ideal of $A$.   Then $J$ has left and right approximate identities.
\end{theorem}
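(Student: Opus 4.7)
The plan is to transfer the characterization of approximate semi-amenability of $A$ (via nets in $A^\#\tensor A^\#$) down to $J$ using the Banach-space projection. Since by Theorem~\ref{equiv} the algebra $A$ is approximately semi-contractible, Theorem~\ref{semicont} supplies nets $(\xi_i),(\eta_i)\subset A^\#\tensor A^\#$ with $a\cdot\xi_i-\eta_i\cdot a\to 0\ (a\in A)$ and $\pi(\xi_i),\pi(\eta_i)\to e$.

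I would then extend the given bounded projection $P\colon A\to J$ to a bounded idempotent $\tilde P\colon A^\#\to A^\#$ (whose range is $J$) by declaring $\tilde P(a+\lambda e)=P(a)$, and form the two bounded linear maps
\[
\Psi_L(b\otimes c)=\tilde P(b)\,c,\qquad \Psi_R(b\otimes c)=b\,\tilde P(c),
\]
on $A^\#\tensor A^\#$. Both take values in $J$, because $J$ is a two-sided ideal in $A^\#$. A direct check shows that $\Psi_L$ is a right $A$-module map and $\Psi_R$ is a left $A$-module map (the opposite sidedness fails in general, because $\tilde P$ is not a bimodule map).

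The decisive point is that although $\tilde P$ is not an $A$-module map, it is the identity on $J$; so for any $j\in J$ and $b\in A^\#$ we have $jb\in J$ (hence $\tilde P(jb)=jb$) and $bj\in J$ (hence $\tilde P(bj)=bj$). Setting $v_i=\Psi_L(\eta_i)\in J$ and $u_i=\Psi_R(\xi_i)\in J$, these collapses give
\[
\Psi_L(j\cdot\xi_i)=j\,\pi(\xi_i)\to j,\qquad \Psi_L(\eta_i\cdot j)=v_i\cdot j,
\]
so applying the bounded map $\Psi_L$ to $j\cdot\xi_i-\eta_i\cdot j\to 0$ yields $v_i j\to j$ for all $j\in J$, exhibiting $(v_i)$ as a left approximate identity for $J$. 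The symmetric computation with $\Psi_R$ gives $j u_i\to j$, producing a right approximate identity $(u_i)\subset J$.

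The one piece of bookkeeping is verifying the one-sided module properties of $\Psi_L,\Psi_R$ and the identities $\tilde P(jb)=jb$, $\tilde P(bj)=bj$ for $j\in J$; no further obstacle is expected, since everything else is continuity of bounded linear maps applied to a convergent net. Note that this argument does not appear to yield a \BAI\ for $J$ (the projected nets $(v_i),(u_i)$ are controlled in norm by $\|P\|\,\|\xi_i\|$ and $\|P\|\,\|\eta_i\|$ respectively, but the $\xi_i,\eta_i$ are not \emph{a priori} bounded), which is consistent with the stated conclusion.
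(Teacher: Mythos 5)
Your proposal is correct and is essentially the paper's own argument: the paper likewise extends $P$ to $A^{\#}$ by killing $e$, forms the map $\Phi(a\otimes b)=aP(b)$ (your $\Psi_R$; your $\Psi_L$ is what the paper means by ``interchanging the roles of $\xi$ and $\eta$''), and collapses $\Phi(\eta\cdot f)$ to $\pi(\eta)f$ using $P|_J=\mathrm{id}$ together with the ideal property, exactly as in your key step. The only differences are presentational (you work with the nets from Theorem~\ref{semicont} and one-sided module-map language, while the paper runs the same estimate over finite sets and $\varepsilon$'s), and your closing remark that no bounded approximate identity is obtained matches the paper's discussion.
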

\begin{proof}  Let $P:A\to J$ be a bounded projection. Adjoin an identity $e$ to $A$, and extend $P:A^{\#} \to J$ by setting $P(e) = 0$.   Define $\Phi: A^{\#}\tensor A^{\#} \to J$ by $\Phi(a\otimes b) = aP(b)$.   Clearly $\|\Phi\|\leq \|P\|$. Take a finite set $F\subset J$ and set $K = \max\{1, \|f\|: f\in F\}$, and take  $\varepsilon > 0$.  Then by \ApsAy of $A$ there is $\xi, \eta\in A^{\#}\tensor A^{\#}$ such that $\|f\cdot \xi - \eta\cdot f\|<\varepsilon/(2\|P\|)$, $f\in F$,  $\|\pi(\xi)-e\| <\varepsilon/(2K)$,  and $\|\pi(\eta)-e\| <\varepsilon/(2K)$.  We may assume that for some $k$, $\xi=\sum_{j=1}^{k} a_{j} \otimes b_{j}$ and  $\eta=\sum_{j=1}^{k} a_{j}' \otimes b_{j}'$ for some $a_{j}, b_{j}, a_{j}', b_{j}' \in A^{\#}$.  Then for $f\in F$,
  \begin{eqnarray*}
 &&\kern -1cm  \|f \sum_{j=1}^{k} a_{j}P(b_{j}) - f\| = \|\Phi(f\cdot \xi) - f\|\\
  & & \kern 1cm \leq\|\Phi(f\cdot \xi-\eta\cdot f)\| + \left\|\sum_{j=1}^{k}a_{j}'[P(b_{j}'f) - b_{j}'f]\right\|+ \|\pi(\eta)f - f\|\\
   & & \kern 1cm \leq \varepsilon\,.
  \end{eqnarray*}
  
Thus $J$ has a right approximate identity. Interchanging the roles of $\xi$ and $\eta$ gives a left approximate identity. 
 \end{proof}
 
 \begin{remark} Recall that  a subspace $E$ of a normed space $X$ is approximately complemented in $X$ is for any compact subset $K$ of $E$ and any $\varepsilon > 0$ there is a continuous operator $P:X\to E$ such that $\|x-Px\| < \varepsilon$ for all $x\in K$, \cite{Zhang1}.  It is boundedly approximately complemented if this can be done with $\|P\|\leq K$ for some fixed $K > 0$.  In the bounded case it is easily seen that the condition holds if and only if it holds for all finite sets $E\subset X$.  A small variant to the proof shows that Theorem \ref{proj} holds with $J$ boundedly approximately complemented.
 
 \end{remark}

\section{Bounded variants}\label{bounded}  

\begin{definition}
For a Banach algebra $A$ and a  Banach $A$-bimodule $X$, a continuous derivation  $D:A\to X$ is \emph{boundedly approximately semi-inner} if there is a constant $K > 0$ and nets $(\xi_i), (\eta_i)$ in $X$ such that
\begin{enumerate}
\item[(i)] $D(a) = \lim_i(a\cdot \xi_i - \eta_i\cdot a)\, (a\in A)$\,; and
\item[(ii)] $\|{\rm ad}_{\xi_{i},\eta_{i}}\|\leq K$, for all $i$\,.
\end{enumerate}
\end{definition}

Parallel to \eqref{limits} we have as a consequence
\begin{enumerate}
\item[(iii)] $a\cdot (\xi_{i} - \eta_{i})\cdot b\to 0$, and for all $i$, $\|a\cdot (\xi_{i} - \eta_{i})\cdot b\|\leq 2K\|ab\|$,  ($a, b\in A)$\,.
\end{enumerate}

We will see below in Example \ref{ellpex} that such derivations need not be approximately inner.

We have the following variant of Definition \ref{variants}. Once again, each notion is clearly stronger than the one following;  however this time there is no equivalence.
\begin{definition}\label{bddvariants}
The Banach algebra $A$ is 
\begin{itemize}
 \item \emph{boundedly approximately semi-contractible} if for any Banach $A$-bimodule $X$, any continuous derivation $D :A\to X$ is boundedly approximately semi-inner.
\item \emph{boundedly approximately semi-amenable} if for any Banach $A$-bimodule $X$, any continuous derivation $D :A\to X^{*}$ is boundedly approximately semi-inner.
 \item \emph{weak$^{*}$-boundedly approximately semi-amenable} if for any Banach $A$-bimodule $X$, any continuous derivation $D :A\to X^{*}$ can be boundedly approximated weak$^{*}$ by semi-inner mappings.
\end{itemize}
\end{definition} 

There are corresponding variants of Theorems \ref{semicont}, \ref{semiamen} and \ref{semiweak}, just by adding in condition (ii).  We will only state the first two of these.

 \begin{theorem}\label{bddsemicont}
The Banach algebra $A$ is boundedly approximately semi-contract\-ible if and only if there is a constant $K$, and nets  $(\xi_i)$ and $(\eta_i)$ in $A^{\#}\tensor  A^{\#}$ such that
\begin{enumerate}
\item  $a\cdot \xi_i - \eta_i \cdot a \to 0\,,\quad (a\in A)$\,,
\item for all $i$, $\|{\rm ad}_{\xi_{i},\eta_{i}}\|\leq K$\,,
\item for all $i$, $\pi(\xi_i) = e$ and $ \pi(\eta_i) = e$\,.
\end{enumerate}
where $e$ is the adjoined identity. \hfill $\Box$
\end{theorem}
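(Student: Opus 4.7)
The plan is to adapt the proof of Theorem \ref{semicont} in both directions, carrying along the uniform operator-norm bound at each stage. Neither direction requires any essentially new idea beyond the earlier argument, so the work is bookkeeping and verifying that the bound is preserved under the natural transport of nets.

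For the forward direction, I would apply the hypothesis to the canonical derivation $D_{0}:A\to A^{\#}\tensor A^{\#}$ given by $D_{0}(a)=a\otimes e-e\otimes a$, which takes values in $\ker\pi$. Bounded approximate semi-contractibility produces a constant $K'$ and nets $(\xi_{i}'),(\eta_{i}')$ in $\ker\pi$ with ${\rm ad}_{\xi_{i}',\eta_{i}'}\to D_{0}$ strongly and $\|{\rm ad}_{\xi_{i}',\eta_{i}'}\|\leq K'$. Setting $\xi_{i}:=e\otimes e-\xi_{i}'$ and $\eta_{i}:=e\otimes e-\eta_{i}'$ gives $\pi(\xi_{i})=\pi(\eta_{i})=e$ at once, and the identity $a\cdot(e\otimes e)-(e\otimes e)\cdot a=D_{0}(a)$ turns clause 1 into
\[
a\cdot\xi_{i}-\eta_{i}\cdot a=D_{0}(a)-{\rm ad}_{\xi_{i}',\eta_{i}'}(a)\to 0,
\]
with the uniform operator-norm bound $K:=\|D_{0}\|+K'$.

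For the converse, given a continuous derivation $D:A\to X$ into a Banach $A$-bimodule $X$, I would extend $D$ to $A^{\#}$ by $D(e)=0$, make $X$ a Banach $A^{\#}$-bimodule in the standard way, and define the bounded linear map $\varphi:A^{\#}\tensor A^{\#}\to X$ by $\varphi(a\otimes b)=a\cdot D(b)$, noting $\|\varphi\|\leq\|D\|$. Exactly as in the proof of Theorem \ref{semicont}, for any $\xi,\eta\in A^{\#}\tensor A^{\#}$ with $\pi(\xi)=\pi(\eta)=e$ one has $\varphi(a\cdot\xi)=a\cdot\varphi(\xi)$ and $\varphi(\eta\cdot a)=\varphi(\eta)\cdot a+D(a)$, so substituting $\xi=\xi_{i}$, $\eta=\eta_{i}$ gives
\[
{\rm ad}_{\varphi(\xi_{i}),\varphi(\eta_{i})}(a)=a\cdot\varphi(\xi_{i})-\varphi(\eta_{i})\cdot a=D(a)+\varphi(a\cdot\xi_{i}-\eta_{i}\cdot a).
\]
Clause 1 then yields $D(a)=\lim_{i}(a\cdot\varphi(\xi_{i})-\varphi(\eta_{i})\cdot a)$, while clause 2 and the identity above supply the estimate $\|{\rm ad}_{\varphi(\xi_{i}),\varphi(\eta_{i})}\|\leq\|D\|+\|\varphi\|K\leq\|D\|(1+K)$, a bound depending only on $D$ (and the algebra-level constant $K$).

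The only subtle point, and the one worth double-checking, is that in the converse direction the bound transferred to the target module depends on $\|\varphi\|$ rather than on an arbitrary seminorm on $A^{\#}\tensor A^{\#}$; the factorisation of the ad-operator through $\varphi$ on the input $a\cdot\xi_{i}-\eta_{i}\cdot a$ is precisely what makes the reduction of bounded universal data to bounded data in $X$ go through.
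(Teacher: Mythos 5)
Your argument is correct and is exactly the route the paper intends: the paper proves this result by repeating the proof of Theorem \ref{semicont} (with the equality $\pi(\xi_i)=\pi(\eta_i)=e$ noted in Remark \ref{eqapprox}) while "adding in condition (ii)", and your transport of the bound through $D_0$ in the forward direction and through the bounded map $\varphi$ (so that $\|{\rm ad}_{\varphi(\xi_i),\varphi(\eta_i)}\|\leq\|D\|+\|\varphi\|K$) in the converse is precisely that bookkeeping. The only caveat is cosmetic: $\|\varphi\|\leq\|D\|$ presumes contractive module actions, and in general one gets $\|\varphi\|\leq C\|D\|$ for the module constant $C$, which changes nothing.
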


\begin{definition}  The {\emph{\BApSCy\ constant of a Banach algebra $A$}}, denoted $K_{BASC}(A)$, is the infimum of all the $K$ satisfying the conditions of Theorem \ref{bddsemicont}, or $+\infty$ if there is no such $K$.
 \end{definition}

\emph{A priori,} $K_{BASC}(A)$ could be $0$.  Supposing it were $0$, then for each $k$, there are nets $(\xi^{(k)}_{i(k)})$ and $(\eta^{(k)}_{i(k)})$ such that for each $a\in A$,
\[
\|a\cdot \xi^{(k)}_{i(k)} - \eta^{(k)}_{i(k)} \cdot a \| \leq \|a\|/k\,.
\] 
But then choosing any element $j_{k}$ from the $i(k)$ from the $k$-th index set, and setting $\xi'_{k} = \xi^{(k)}_{j_{k}}$ and $\eta'_{k} = \eta^{(k)}_{j_{k}}$, we have
\[
\|a\cdot \xi'_{k} - \eta'_{k} \cdot a \| \leq \|a\|/k\,, \quad (a\in A)\,.
\] 
Thus for these sequences we have clause 1 holding uniformly on $\|a\| = 1$.  So by Theorem \ref{unifscont} below, $A$ is contractible.

 \begin{theorem}\label{bddsemiamen}
The Banach algebra $A$ is boundedly approximately semi-amenable if and only if there are nets  $(\xi_i)$ and $(\eta_i)$ in $(A^{\#}\tensor  A^{\#})^{**}$ such that, with  $e$ the adjoined identity.
\begin{enumerate}
\item  $a\cdot \xi_i - \eta_i \cdot a \to 0\,,\quad (a\in A)$\,,
\item for all $i$, $\|{\rm ad}_{\xi_{i},\eta_{i}}\|\leq K$\,,
\item  for all $i$, $\pi^{**}(\xi_i) = e$ and $ \pi^{**}(\eta_i) = e$\,.\hfill $\Box$

\end{enumerate} 
\end{theorem}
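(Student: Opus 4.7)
The plan is to follow the proof of Theorem \ref{semiamen} (the unbounded analogue) verbatim, tracking the norms of the approximating $\mathrm{ad}$-operators through each step. For the forward implication, consider the canonical derivation $D_0:A\to (\ker\pi)^{**}$ given by $D_0(a)=a\otimes e-e\otimes a$ (composed with the inclusion into the bidual). Here $(\ker\pi)^{**}$ is a dual Banach $A$-bimodule, and $\|D_0\|\leq 2$. By the hypothesis of \BApSAy, there exist nets $(\xi_i'),(\eta_i')$ in $(\ker\pi)^{**}$ and a constant $K'>0$ with
\[
D_0(a)=\lim_i(a\cdot\xi_i'-\eta_i'\cdot a)\qquad\text{and}\qquad \|\mathrm{ad}_{\xi_i',\eta_i'}\|\leq K'.
\]
Using the canonical identification $(\ker\pi)^{**}\cong\ker\pi^{**}\subset(A^{\#}\tensor A^{\#})^{**}$, we have $\pi^{**}(\xi_i')=\pi^{**}(\eta_i')=0$. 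Setting $\xi_i:=e\otimes e-\xi_i'$ and $\eta_i:=e\otimes e-\eta_i'$ then gives $\pi^{**}(\xi_i)=\pi^{**}(\eta_i)=e$, and
\[
a\cdot\xi_i-\eta_i\cdot a = D_0(a)-(a\cdot\xi_i'-\eta_i'\cdot a)\to 0,
\]
with $\|\mathrm{ad}_{\xi_i,\eta_i}\|\leq \|D_0\|+K'\leq 2+K'$, verifying all three clauses.

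For the converse, given any continuous derivation $D:A\to X^{*}$, extend $D$ to $A^{\#}$ by $D(e)=0$, and define the continuous linear map $\varphi:A^{\#}\tensor A^{\#}\to X^{*}$ by $\varphi(a\otimes b)=a\cdot D(b)$, so that $\|\varphi\|\leq\|D\|$. The derivation identity yields
\[
\varphi(\xi\cdot a)=\varphi(\xi)\cdot a+\pi(\xi)D(a),\qquad \varphi(a\cdot\xi)=a\cdot\varphi(\xi).
\]
Passing to biduals with $j:X^{*}\hookrightarrow X^{***}$ the canonical embedding and $Q:X^{***}\to X^{*}$ its adjoint projection, and using clause (3) that $\pi^{**}(\xi_i)=\pi^{**}(\eta_i)=e$, one obtains
\[
\varphi^{**}(a\cdot\xi_i-\eta_i\cdot a)=a\cdot\varphi^{**}(\xi_i)-\varphi^{**}(\eta_i)\cdot a-jD(a).
\]
Since the left-hand side tends to $0$ by clause (1), applying $Q$ and setting $\zeta_i:=Q\varphi^{**}(\xi_i)$, $\chi_i:=Q\varphi^{**}(\eta_i)\in X^{*}$ yields $D(a)=\lim_i(a\cdot\zeta_i-\chi_i\cdot a)$. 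Since
\[
\|a\cdot\zeta_i-\chi_i\cdot a\|\leq \|\varphi\|\,\|\mathrm{ad}_{\xi_i,\eta_i}\|\,\|a\|+\|D\|\,\|a\|\leq \|D\|(K+1)\|a\|,
\]
the bound $\|\mathrm{ad}_{\zeta_i,\chi_i}\|\leq \|D\|(K+1)$ depends only on $D$ and the fixed constant $K$, establishing that $D$ is boundedly approximately semi-inner.

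No serious conceptual obstacle arises: the proof is a direct transcription of the argument for Theorem \ref{semiamen} with the additional book-keeping needed for the norm estimates. The one small point requiring care is the forward direction, where one must verify that the nets delivered by \BApSAy\ can be taken within $(\ker\pi)^{**}$ itself (so that $\pi^{**}$ annihilates them, giving clause (3) exactly after the shift by $e\otimes e$); this follows from the standard identification $(\ker\pi)^{**}=\ker\pi^{**}$ arising from the bidual of the quotient short exact sequence $0\to\ker\pi\to A^{\#}\tensor A^{\#}\to A^{\#}\to 0$.
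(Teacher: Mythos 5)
Your proof is correct and takes essentially the route the paper intends: Theorem \ref{bddsemiamen} is stated there as the variant of Theorem \ref{semiamen} obtained ``by adding in condition (ii)'', and your argument is exactly that proof --- the canonical derivation into $(\ker\pi)^{**}\cong\ker\pi^{**}$ for necessity, and the map $\varphi$, its bidual $\varphi^{**}$, the embedding $j$ and the projection $Q\colon X^{***}\to X^{*}$ for sufficiency --- with the norms of the ${\rm ad}_{\xi_i,\eta_i}$ tracked through each step. The resulting bounds ($2+K'$ and $\|D\|(K+1)$) are exactly what is needed, so there is nothing to add.
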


\begin{definition}  The { \emph{\BApSAy\ constant of a Banach algebra $A$}}, denoted $K_{BASC}(A)$, is the infimum of all the $K$ satisfying the conditions of Theorem \ref{bddsemiamen}, or $+\infty$ if there is no such $K$.
 \end{definition}


Once again, if  $K_{BASA}(A)= 0$ then $A$ is amenable (and conversely).

\oomit{  \begin{theorem}\label{bddsemiweak}
$A$ is weak$^{*}$-boundedly approximately semi-amenable if and only if there are nets  $(\xi_i)$ and $(N_i)$ in $(A^{\#}\tensor  A^{\#})^{**}$ such that
\begin{enumerate}
\item  $a\cdot \xi_i - N_i \cdot a \to 0\ {\rm weak}^{*}\,,\quad (a\in A)$\,,
\item for all $i$, $\|{\rm ad}_{\xi_{i},\eta_{i}}\|\leq K$\,,
\item $ \pi^{**}(\xi_i) = \pi^{**}(N_i) = e\,.$
\end{enumerate}
where $e$ is the adjoined identity.
\end{theorem}
}

 In \cite{GhaLoyZh} it is shown that \ApAy\ and \ACy\ are the same, and Theorem \ref{equiv} above shows the result holds with the qualifier  `semi'.  On the other hand, \cite{GhaR2} shows that \ApA\ and \BApA\ are distinct notions. 

 \begin{proposition} \label{difference}  Bounded approximate semi-amenability is weaker than \BApSCy. 
 \end{proposition}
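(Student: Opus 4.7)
The containment direction \BApSCy\ $\Rightarrow$ \BApSAy\ is immediate, since every dual bimodule $X^{*}$ is itself a Banach $A$-bimodule, so any derivation $D : A \to X^{*}$ that is boundedly approximately semi-inner in the \BApSCy\ sense automatically satisfies the condition defining \BApSAy. The whole content of the proposition is therefore the strictness.

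For the separation, my plan would be to exhibit an explicit Banach algebra $A$ that is \BApSA\ but not \BApSC. The natural path is to use the separation discussed in the paragraph immediately preceding the statement: \cite{GhaR2} furnishes algebras that are \ApA\ but not \BApA. By Theorem \ref{equiv} such an algebra is also \ApsA, so one asks whether a variant of this construction can be massaged so that all derivations into dual bimodules are boundedly approximately semi-inner (forcing \BApSAy) while some derivation into a non-dual bimodule still requires an unbounded approximating net (blocking \BApSCy). A useful first attempt is an algebra $A$ equipped with a central bounded approximate identity which is known to be \BApA\ but fails \BApC; for such an $A$ one would combine a bounded version of Proposition \ref{cbaistrong} with the bounded analogue of Theorem \ref{semi+baa=approx} to conclude $\BApA \Leftrightarrow \BApSA$ and $\BApC \Leftrightarrow \BApSC$, transferring a BApA--BApC gap directly into a BApSA--BApSC gap.

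If no such centrally approximately unital example is available off the shelf, I would instead build $A$ and a concrete bimodule $X$ directly, guided by Theorems \ref{bddsemicont} and \ref{bddsemiamen}: the characterization in $A^{\#}\tensor A^{\#}$ (respectively $(A^{\#}\tensor A^{\#})^{**}$) means the question reduces to whether certain nets $(\xi_i),(\eta_i)$ with $\pi(\xi_i)=\pi(\eta_i)=e$ and $\|{\rm ad}_{\xi_i,\eta_i}\|\leq K$ satisfying $a\!\cdot\!\xi_i-\eta_i\!\cdot\!a\to 0$ can be found in the algebraic tensor product itself rather than only in its bidual. Thus the example must exploit the norm-versus-weak$^{*}$ gap between $A^{\#}\tensor A^{\#}$ and its bidual under a uniform bound.

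The main obstacle, as I see it, is exactly this last point: the argument behind Theorem \ref{equiv} passes freely between the module, its bidual, and the weak$^{*}$-topology via Goldstine's theorem and the projection $X^{***}\to X^{*}$, and each of these moves is harmless for approximate semi-amenability but can blow up constants. Controlling these constants in a single explicit example --- so that the bound survives in the dual setting but provably cannot survive in the predual --- is the delicate step, and it is where I would expect the proof to concentrate its work.
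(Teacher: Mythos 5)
There is a genuine gap: your proposal correctly reduces the proposition to exhibiting a single algebra that is \BApsA\ but not \BASC, but it never actually produces one. Both of your routes are left as plans whose crucial ingredients are unverified: the first needs an algebra with a \emph{central} \BAI\ that is \BApA\ but not \BAC\ (no such example is cited, and its existence is doubtful --- note that \BACy\ forces a \BAI\ by \cite[Corollary 3.4]{CGZ}, and bounded approximate amenability together with a \BAI\ sits very close to \BACy\ in the literature, so this is exactly the hard case), plus unproved bounded analogues of Proposition \ref{cbaistrong} and Theorem \ref{semi+baa=approx}; the second route is only a description of where the difficulty lies (controlling constants between $A^{\#}\tensor A^{\#}$ and its bidual), not an argument. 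You also point to \cite{GhaR2} (\ApAy\ versus \BAAy), which is not the relevant construction here.

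The paper separates the two notions without building anything new and without any bidual bookkeeping, by an approximate-identity obstruction: a \BASC\ algebra has multiplier-bounded left and right approximate identities (a variant of Theorem \ref{parallel}, read off from Theorem \ref{bddsemicont} since $\pi(\xi_i)=\pi(\eta_i)=e$ with $\|{\rm ad}_{\xi_i,\eta_i}\|\leq K$), and a \BApA\ algebra with such one-sided approximate identities must have a two-sided \BAI\ by \cite[Theorem 3.3]{CGZ}. The algebra of \cite{GhaR1} is \BApA\ but has no \BAI, hence it cannot be \BASC; on the other hand it is trivially \BApsA, since a bounded approximately inner net ${\rm ad}_{\xi_i}$ is of the form ${\rm ad}_{\xi_i,\xi_i}$. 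So the separation comes from an already-known example plus a cheap necessary condition for \BApSCy, not from the norm-versus-weak$^{*}$ gap you anticipated; if you want to salvage your approach, the missing step is precisely a necessary condition (like the multiplier-bounded approximate identities above) that \BApSCy\ imposes but \BApSAy\ does not, together with a known algebra violating it.
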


\begin{proof}
Note firstly that any  boundedly approximately semi-contractible algebra has multiplier bounded left and right approximate identities, this follows from a variant of Theorem \ref{parallel}. 
So if the algebra was \BApA, then by \cite[Theorem 3.3]{CGZ} it would have a (two-sided) \BAI.  Thus the example of \cite{GhaR1} is a Banach algebra $A$ which is \BApA\ but not boundedly approximately semi-contractible. Being \BApA,  it is trivially \BApsA, so this latter notion is therefore strictly weaker than \BApSCy.
\end{proof}

In \cite[Example 3.7]{GhaLoy2} we showed that the algebras $\ell^{p}$, $(1\leq p < \infty)$, with pointwise product, are \ApsA.  In fact we can say more:
\begin{example} \label{ellpex}
 The algebras $\ell^{p}$, $1\leq p < \infty$ are \BASC, with $K_{BAC}(\ell^{p}) = 2$.  For $k\in \bN$ define 
\[
e_{k}(j) = 
\begin{cases}
1 & 1\leq j\leq k\\
0 & {\rm otherwise},
\end{cases}
\]
 and define $\xi_{k}, \eta_{k}\in (\ell^{p})^{\#}\otimes  (\ell^{p})^{\#}$ by
 \begin{eqnarray*}
 \xi_{k} = e\otimes e - e_{k}\otimes e + \sum_{j=1}^{k}\delta_{j}\otimes \delta_{j}\,,\quad
\eta_{k} = e\otimes e - e\otimes e_{k} + \sum_{j=1}^{k}\delta_{j}\otimes \delta_{j}\,,\\
\end{eqnarray*}
where $e$ is the adjoined identity.  Then $\pi(\xi_{k}) = \pi(\eta_{k}) = e$, and for each $a= (a_{j)}\in \ell^{p}$, 
\begin{eqnarray*}
a\cdot \xi_{k} &=&  a\otimes e - ae_{k}\otimes e + \sum_{j=1}^{k} a_{j} \delta_{j} \otimes \delta_{j}\,,\\
\eta_{k}\cdot a &= & e\otimes a - e\otimes e_{k}a + \sum_{j=1}^{k} a_{j} \delta_{j} \otimes \delta_{j}\,,\\ 
\end{eqnarray*}
whence
 \[
 a\cdot \xi_{k} - \eta_{k}\cdot a = (a-e_{k}a)\otimes e - e \otimes (a - ae_{k}) \to 0, \quad \textrm{as}\ k\to \infty\,,
 \]
 with 
 \[
 \|a\cdot \xi_{k} - \eta_{k}\cdot a\|\leq 2\|a\|\,\quad (k\in \bN)\,.
 \]
 Thus $K_{BASC}(\ell^{p})\leq 2$.  In fact  $K_{BASC}(\ell^{p})= 2$: 
 
 \end{example}
 \oomit{
 Now clause 3 of Theorem \ref{bddsemicont} has a consequence in this example. \marginpar{\bf \color{red}{changes here}}
 First note that, taking the $\|\cdot\|_{1}$ sum of the summands on the RHS,  we have the isometric isomorphism
  \[ 
 \ell^{p\#}\tensor \ell^{p\#} \simeq (\ell^{p}\tensor \ell^{p}) \oplus (e\otimes \ell^{p}) \oplus (\ell^{p}\otimes e) \oplus \bC (e\otimes e)\,.
  \]  
 
  So taking a general $M, N \in  \ell^{p\#}\tensor \ell^{p\#}$ with $\pi(M) =\pi(N) =e$,\marginpar{\bf \color{red}{omit this in view of 3.9?}}
  \[
 M] = e\otimes e +\sum_{m}b_{m}\delta_{m}\otimes e + \sum_{n} e\otimes c_{n}\delta_{n} + z\,,
 \]
  \[
 N  = e\otimes e +\sum_{m}b_{m}'\delta_{m}\otimes e + \sum_{n} e\otimes  c_{n}'\delta_{n} + z'\,,
 \]
for some $z, z' \in \ell^{p}\tensor \ell^{p}$. 
Then for each $k\in\bN$,
\[
 \delta_{k}\cdot M - N\cdot \delta_{k} =  (b_{k}+1) \delta_{k}\otimes e + e\otimes (c_{k}' +1)\delta_{k} + \cdots
\]
so that, since $\|\delta_{k}\| = 1$ for any $k$, 
\[
|b_{k}+1| + |c_{k}' +1| \leq  \|\delta_{k}\cdot M - N\cdot \delta_{k}\| = \|{\rm ad}_{M, N}(\delta_{k}) \| \leq  \|{\rm ad}_{M, N} \| \leq K_{BASC}(\ell^{p})\,.
\]
Letting $k\to \infty$ it follows that $2\leq K_{BASC(\ell^{p})}$.  Whence $K_{BASC(\ell^{p})} = 2$.

\end{example}
  
 Note that the argument works for any index set, but does not work in the weighted case.

\vskip 2mm
\oomit{{\color{red}{DOES NOT WORK SO FAR
Let $A$ be a unital  \BAC\ Banach algebra, and set
\begin{eqnarray}\label{new constant}
C(A) = \inf\{K:& \kern -1em{\rm \ there\  is\ } (\xi_{i}), (\eta_{i})\subset A\tensor A{\rm\ with\ } a\cdot \xi_{i} - \eta_{i} \cdot a \to 0,\\ 
&  \|a\cdot \xi_{i} - \eta_{i} \cdot a\|\leq K\|a\|,  a\in A, {\rm \ all\ } i\} > 0\,.
\end{eqnarray}
$
 Let $A_{m}$ be the algebra $A$ with the new (equivalent) norm  $\|\cdot\|_{m} = m\|\cdot\|$, for some $m > 1$.  Let $M, N\in A_{m} \tensor A_{m}$, $a\in A_{m}$.  Then 
\[
\|a\cdot M - N\cdot a\|_{m} = m^{2}\|a\cdot M - N\cdot a\|\,,
\]
whence $C(A_{m}) = mC(A)$.
%
Attempting the same with $K_{BASC}$ does not work as the norm on $A_{m}^{\#}\tensor A_{m}^{\#}$ is not $m^{2}$ times that on $A_{m}$.  This is true on elements of $A_{m}\tensor A_{m}$, but only $m$ times  on $e\tensor A_{m}$ and $A_{m}\otimes e$.
\vskip 1cm
Consider $c_{0}(A_{m})$.  Suppose this is \BASC.  Then there is $K>0$ and nets  $(\xi_i)$, $(N_i)$ in $c_{0}(A_{m})^{\#}\tensor  c_{0}(A_{m})^{\#}$ such that
\begin{enumerate}
\item  $a\cdot \xi_i - N_i \cdot a \to 0\,,\quad (a\in c_{0}(A_{m}))$\,,
\item for all $i$, $\|{\rm ad}_{\xi_{i},\eta_{i}}\|\leq K$\,,
\item $ \pi(\xi_i) = E$ and $ \pi(N_i) = E$\,.
\end{enumerate}
where $E$ is the adjoined identity.
Let $P_{k}: c_{0}(A_{m})^{\#}\to A_{k}: \lambda E + (f_{m})\mapsto \lambda e_{k} + f_{k}$. Note that $(f_{m})\mapsto f_{k}$ is of norm $1$,   $\|E\| = 1$ but $\|e_{k}\| = k$.  Consider $P_{k}\otimes P_{k}:c_{0}(A_{m})^{\#}\tensor  c_{0}(A_{m})^{\#} \to A_{k}\tensor A_{k}$.  Then for $a\in A_{k}$, unfortunately only bound seems to be
\[
\|a\cdot \big(P_{k}\otimes P_{k}\big)(\xi_{i}) -  \big(P_{k}\otimes P_{k}\big)(\eta_{i})\cdot a\|_{k} \leq K\|a\|_{k} k^{2}\,.
\]
}}
}
                    
}


\begin{proposition}\label{lowerbound}
Let $A$ be a Banach algebra which does not have an identity. Suppose that $\Phi_{A}$ is infinite and that
\begin{equation}\label{inf}
\inf\{\|\phi\| :\phi\in \Phi_{A}\} =\delta > 0\,.
\end{equation}
Then $K_{BASC}\geq 2\delta$\,.
\end{proposition}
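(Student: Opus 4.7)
My plan is to prove the stronger pointwise statement that for every pair $\xi,\eta\in A^{\#}\tensor A^{\#}$ with $\pi(\xi)=\pi(\eta)=e$ one has $\|\mathrm{ad}_{\xi,\eta}\|\geq 2\delta$; the bound $K_{BASC}\geq 2\delta$ then follows from Theorem \ref{bddsemicont} by taking the infimum over admissible nets.

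The first step uses the isometric decomposition of the projective tensor product of $\ell^{1}$-direct sums,
\[
A^{\#}\tensor A^{\#}\;\cong\;\bC(e\otimes e)\;\oplus_{1}\;(A\otimes e)\;\oplus_{1}\;(e\otimes A)\;\oplus_{1}\;(A\tensor A),
\]
together with the decompositions $\xi=e\otimes e+v\otimes e+e\otimes u+w$ and $\eta=e\otimes e+v'\otimes e+e\otimes u'+w'$ (with $u,v,u',v'\in A$ and $w,w'\in A\tensor A$; the constraints $\pi(\xi)=\pi(\eta)=e$ force $u+v+\pi(w)=0=u'+v'+\pi(w')$). A direct calculation shows that for $a\in A$ only $a\cdot\xi$ contributes to the $(A\otimes e)$-summand of $a\cdot\xi-\eta\cdot a$, namely $(a+av)\otimes e$, and only $\eta\cdot a$ contributes to the $(e\otimes A)$-summand, namely $-e\otimes(a+u'a)$. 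By the $\ell^{1}$-direct-sum property,
\[
\|a\cdot\xi-\eta\cdot a\|\;\geq\;\|a+av\|+\|a+u'a\|.
\]

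Next I bring in characters. For $\phi\in\Phi_{A}$, the duality bound $|\phi(x)|\leq\|\phi\|\,\|x\|$ combined with multiplicativity $\phi(a+av)=\phi(a)(1+\phi(v))$ gives
\[
\|a\cdot\xi-\eta\cdot a\|\;\geq\;\frac{|\phi(a)|}{\|\phi\|}\bigl(|1+\phi(v)|+|1+\phi(u')|\bigr).
\]
Because $A$ is non-unital with $\Phi_{A}$ infinite, $\Phi_{A}$ cannot be weak-$*$ compact in $A^{*}$: otherwise $\phi_{\infty}$ would be isolated in the one-point compactification $\Phi_{A^{\#}}=\Phi_{A}\cup\{\phi_{\infty}\}$, and a Shilov-idempotent argument would produce an idempotent $p\in A^{\#}$ with $\widehat{p}=\chi_{\{\phi_{\infty}\}}$, from which one extracts an identity for $A$. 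Consequently the Gelfand transforms $\widehat{v},\widehat{u'}$ lie in $C_{0}(\Phi_{A})$ and vanish at infinity, so one may select a sequence $(\phi_{n})\subset\Phi_{A}$ with $\phi_{n}(v)\to 0$ and $\phi_{n}(u')\to 0$.

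Finally, for each $n$ choose $a_{n}\in A$ with $\|a_{n}\|=1$ and $|\phi_{n}(a_{n})|\geq\delta-1/n$ (possible since $\|\phi_{n}\|\geq\delta$). Using the standard fact $\|\phi_{n}\|\leq 1$ for characters on a Banach algebra one has $|\phi_{n}(a_{n})|/\|\phi_{n}\|\geq\delta-1/n$, whence
\[
\|\mathrm{ad}_{\xi,\eta}\|\;\geq\;\|a_{n}\cdot\xi-\eta\cdot a_{n}\|\;\geq\;(\delta-1/n)\bigl(|1+\phi_{n}(v)|+|1+\phi_{n}(u')|\bigr),
\]
which tends to $2\delta$ as $n\to\infty$. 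The main obstacle is the weak-$*$ non-compactness step for $\Phi_{A}$; this is the one place where non-unitality of $A$ is used essentially, while the remainder of the proof is a routine pairing of projective-tensor bookkeeping with character estimates.
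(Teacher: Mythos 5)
Your argument is, in substance, the paper's own proof: the same $\ell^{1}$-decomposition of $A^{\#}\tensor A^{\#}$, the same lower bound $\|a\cdot\xi-\eta\cdot a\|\geq\|a+av\|+\|a+u'a\|$ read off from the $A\otimes e$ and $e\otimes A$ summands, the same character estimate using $\|\phi\|\leq 1$, and the same choice of norm-one elements $a_{n}$ with $|\phi_{n}(a_{n})|$ close to $\delta$ along a net of characters tending to $0$ weak-$*$.

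The one place you go beyond the paper is in trying to \emph{prove} that $\Phi_{A}$ is not weak-$*$ compact (the paper simply asserts ``by non-compactness of $\Phi_{A}$'' and takes a net going to infinity), and that step has a genuine gap. The Shilov idempotent theorem applies to commutative algebras, and even there an idempotent $p\in A^{\#}$ with $\widehat{p}=\chi_{\{\phi_{\infty}\}}$ only yields $u=e-p\in A$ with $ua-a\in\mathrm{rad}(A^{\#})$ for every $a$ --- an identity modulo the radical, not an identity. In fact the implication you want is false: take $A=C(K)\oplus R$ for $K$ infinite compact and $R$ a nonzero Banach space, with product $(f,r)(g,s)=(fg,0)$; then $A$ has no identity, $\Phi_{A}\cong K$ is infinite and weak-$*$ compact, and every character has norm $1$, so $\delta=1$. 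Thus non-compactness of $\Phi_{A}$ does not follow from the stated hypotheses; it must either be assumed, or derived under extra hypotheses (commutativity plus semisimplicity makes your Shilov argument work, and covers the paper's intended applications such as $J_{p}$). This is a defect your write-up shares with the paper's proof, but since you explicitly claim to derive non-compactness from the hypotheses, in your version it is an identifiable false step rather than an unproved assertion.
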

 \begin{proof} For $\xi, \eta$ in $A^{\#}\tensor A^{\#}$ with $\pi(\xi) = \pi(\eta) = e$,  write 
\[ 
 \xi = (e\otimes e) + (F\otimes e)  +(e\otimes F' )+ \xi'\,, \eta = (e\otimes e) + (G'\otimes e ) +(e\otimes G) + \eta'\,,
 \]
where $F, F', G, G' \in A$, $\xi', \eta'\in A\tensor A$.  
 Then for $a\in A$,
\[
a\cdot \xi - \eta\cdot a =  (a + aF)\otimes e - e\otimes (a+Ga) + \zeta\,,
\]
for some $\zeta\in A\tensor A$.   Take $\varepsilon > 0$, and suppose that $A$  is \BASC. Since
\[
\|a\cdot \xi - \eta\cdot a\| \geq \|a + aF\| + \|a+Ga\|\,,
\]
by choosing $\xi$  and $\eta$ such that for any $a\in A$,

\[
(K_{BASC}(A)+\varepsilon)\|a\| \geq \|a\cdot \xi - \eta\cdot a\| \,,
\]
we have, for any $\varphi\in \Phi_{A}$,
\[
(K_{BASC}(A)+\varepsilon)\|a\| \geq  \|a + aF\| + \|a+Ga\| \geq |\phi(a)|\big(| 1+\phi(F)| + |1 +\phi(G)|\big)\,.
\]
Take $0 <\rho <1$.  By non-compactness of $\Phi_{A}$, there is a net $(\phi_{\alpha})$ going to $\infty$ in $\Phi_{A}$.  Take norm one elements $a_{\alpha}\in A$ with $\phi_{\alpha}(a_{\alpha}) \geq \rho\delta$.  Then 
\[
K_{BASC}(A)+\varepsilon\geq \rho \delta\big(| 1+\phi_{\alpha}(F)| + |1 +\phi_{\alpha}(G)|\big)\,.
\]
Taking the limit over $\alpha$, $K_{BASC}(A)+\varepsilon\geq 2\rho\delta$, for each $0 < \rho < 1$, each $\varepsilon > 0$.  It follows that  $K_{BASC}(A)\geq 2\delta$.
\end{proof}
Note that if $A$ has a bounded approximate identity of norm $m$, then $\delta \geq 1/m$ in the above.

A sufficient condition for \eqref{inf} to hold is the following. The hypothesis here is certainly not necessary -- in particular it excludes $\ell^{p}$.  

\begin{proposition}\label{lower bound}
Let $A$ be a separable Banach algebra with $A^{2} = A$ and $\Phi_{A}\not = \emptyset$.  Then $\inf\{\|\phi\| :\phi\in \Phi_{A}\} > 0$.
\end{proposition}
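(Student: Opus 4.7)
The plan is to reinterpret $A^2 = A$ as a statement about the range of the product map and then apply the open mapping theorem. Write $\pi : A \widehat{\otimes} A \to A$ for the canonical (continuous) multiplication map, $\pi(a \otimes b) = ab$, which is well defined and of norm $\le 1$. The hypothesis $A^2 = A$ — understood, as is standard and consistent with the authors' remark that it excludes $\ell^p$, as the algebraic assertion that every element of $A$ is a finite linear combination of products — says exactly that the range of $\pi$ contains (hence equals) $A$. Thus $\pi$ is a continuous surjection between Banach spaces.

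By the open mapping theorem there is a constant $C > 0$ such that every $a \in A$ admits a representation
\[
a = \sum_{n} b_n c_n, \qquad \sum_n \|b_n\| \|c_n\| \leq C \|a\|,
\]
with the series absolutely convergent in $A$. Now fix $\phi \in \Phi_A$. Characters on Banach algebras are automatically continuous with $\|\phi\| \leq 1$, so $\phi(a) = \sum_n \phi(b_n)\phi(c_n)$ with absolutely convergent scalar series, and
\[
|\phi(a)| \leq \sum_n |\phi(b_n)||\phi(c_n)| \leq \|\phi\|^2 \sum_n \|b_n\|\|c_n\| \leq C \|\phi\|^2 \|a\|.
\]
Taking the supremum over $\|a\| \leq 1$ yields $\|\phi\| \leq C \|\phi\|^2$; since $\phi \neq 0$ implies $\|\phi\| > 0$, we conclude $\|\phi\| \geq 1/C$. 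Hence $\inf\{\|\phi\| : \phi \in \Phi_A\} \geq 1/C > 0$, which is the desired bound.

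The only delicate point is the interpretation of $A^2 = A$: it must be the algebraic equality $\operatorname{span}\{ab : a,b \in A\} = A$, for otherwise $\pi$ is merely densely ranged and the open mapping step collapses. Once that is granted, the separability hypothesis plays no role in this argument; presumably it is included only for consistency with the surrounding context of the paper.
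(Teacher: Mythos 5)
Your proof is correct, but it reaches the key factorization estimate by a different route than the paper. The paper invokes Loy's uniform factorization theorem \cite[Theorem 1.3]{Loy}: for a separable Banach algebra with $A^{2}=A$ there are a \emph{fixed} $n\in\bN$ and $M>0$ so that every $x$ is a sum of $n$ products $x=\sum_{i=1}^{n}x_iy_i$ with $\sum_{i=1}^{n}\|x_i\|\,\|y_i\|\leq M\|x\|$; this is exactly where the separability hypothesis is used. You instead apply the open mapping theorem directly to the product map $\pi:A\tensor A\to A$, whose range contains $\operatorname{span}\{ab:a,b\in A\}=A$, obtaining factorizations as \emph{infinite} absolutely convergent sums $a=\sum_n b_nc_n$ with $\sum_n\|b_n\|\,\|c_n\|\leq C\|a\|$ (up to the routine $\varepsilon$ in passing from the projective norm of a preimage to a concrete representation). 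Since a character is automatically continuous, it passes through the infinite sum, and from there your estimate $|\phi(a)|\leq C\|\phi\|^{2}\|a\|$, hence $\|\phi\|\geq C^{-1}$, is the same computation as the paper's. What your route buys is self-containedness and generality: the uniform bound on the number of terms is never needed for the character estimate, so separability can indeed be dropped, as you observe. What the paper's route buys is the stronger structural conclusion (finitely many terms, uniformly in $x$), at the cost of the separability hypothesis and reliance on the cited theorem. Your reading of $A^{2}=A$ as the algebraic span of products is also the intended one -- it matches the usage in Loy's theorem and the remark that the hypothesis excludes $\ell^{p}$.
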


\begin{proof}
By \cite[Theorem 1.3]{Loy} there are $n\in \bN$ and $M > 0$ such that for  $x\in A$, there exist $x_{1}, \ldots, x_{n} \in A$ such that
 \[
  x = \sum_{i=1}^{n} x_{i} y_{i}\,, \quad \sum_{i=1}^{n} \|x_{i}\| \|y_{i}\| \leq M\|x\|\,.
 \]
  Thus for any $\phi\in \Phi_{A}$,
 \[ 
 |\phi(x)| \leq \sum_{i=1}^{n} |\phi(x_{i})| |\phi( y_{i})| \leq \|\phi\|^{2}  \sum_{i=1}^{n} \|x_{i}\| \|y_{i}\| \leq M\|\phi\|^{2}\|x\|\,.
  \] 
  It is immediate that $\|\phi\|\geq M^{-1}$.
  \end{proof}
  
\oomit{
Now consider $c_{0}(A_{m})$, \ApsC\ by Theorem \ref{sums}. \marginpar{\bf \vskip -1cm \color{blue}{this paragraph harks back to $A_{m}= A$ with $\|\cdot\|_{m} =m\|\cdot\|$, which got nowhere.  It should be removed.}}
Indeed, if $K_{BASC}(A_{m})$ are bounded, \cite[Lemma 3.2]{BEJ} shows  $c_{0}(A_{m})$ is \BASC.  $c_{0}(A_{m})$ contains $A_{m}$ as a summand, and so cannot be \BASC\ if $\{K_{BASC}(A_{m})\}$ is unbounded, since $A_{m}\tensor A_{m}$ sits naturally as a summand  in $c_{0}(A_{m})\tensor c_{0}(A_{m})$.  More generally, we have
}

\begin{proposition}\label{quotients}
Let $A$ be an \ApsC\ Banach  algebra, $\Theta : A\to B$  a continuous epimorphism.  Then $B$ is \ApsC.  If $A$ is \BASC\ then so is $B$.  If, further, $\Theta$ is the quotient map determined by a closed ideal, then
\[
K_{BASC}(B) \leq K_{BASC}(A)\,.
\]
The same holds for \BApsA.
\end{proposition}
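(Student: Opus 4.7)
The plan is to transport the approximate diagonal-type data guaranteed by Theorem \ref{semicont} (respectively Theorem \ref{bddsemiamen}) across $\Theta$ by applying the tensor square of the unital extension of $\Theta$.

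Let $\Theta^{\#}\colon A^{\#}\to B^{\#}$ be the unital extension of $\Theta$ (sending the adjoined identity $e_{A}$ to $e_{B}$), and set $\Phi=\Theta^{\#}\tensor \Theta^{\#}\colon A^{\#}\tensor A^{\#}\to B^{\#}\tensor B^{\#}$. A direct check on elementary tensors shows $\Phi$ is an $A^{\#}$-bimodule morphism when the target is viewed as an $A^{\#}$-bimodule via $\Theta^{\#}$; equivalently, for $a\in A$ and $\zeta\in A^{\#}\tensor A^{\#}$,
\[
\Phi(a\cdot\zeta)=\Theta(a)\cdot\Phi(\zeta),\qquad \Phi(\zeta\cdot a)=\Phi(\zeta)\cdot\Theta(a)\,.
\]
Moreover $\pi_{B}\circ\Phi=\Theta^{\#}\circ\pi_{A}$, so $\Phi$ carries the $\pi_{A}$-preimage of $e_{A}$ into the $\pi_{B}$-preimage of $e_{B}$.

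Suppose $A$ is \ApsC, and let $(\xi_{i}),(\eta_{i})\subset A^{\#}\tensor A^{\#}$ be the nets furnished by Theorem \ref{semicont}. Put $\xi_{i}'=\Phi(\xi_{i})$ and $\eta_{i}'=\Phi(\eta_{i})$. Any $b\in B$ is $\Theta(a)$ for some $a\in A$ (by surjectivity), so
\[
b\cdot\xi_{i}'-\eta_{i}'\cdot b=\Phi\bigl(a\cdot\xi_{i}-\eta_{i}\cdot a\bigr)\longrightarrow 0\,,
\]
while $\pi_{B}(\xi_{i}')=\Theta^{\#}(\pi_{A}(\xi_{i}))\to e_{B}$, and similarly for $\eta_{i}'$. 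By Theorem \ref{semicont} (and Remark \ref{eqapprox}) this shows $B$ is \ApsC.

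For the bounded version, assume $A$ is \BASC\ with a net witnessing any constant $K>K_{BASC}(A)$, and apply the same construction. In the quotient case $\Theta$ is a contraction, whence $\|\Theta^{\#}\|\leq 1$ and $\|\Phi\|\leq 1$. For $b\in B$ and $\varepsilon>0$, choose $a\in A$ with $\Theta(a)=b$ and $\|a\|\leq(1+\varepsilon)\|b\|$; then
\[
\|b\cdot\xi_{i}'-\eta_{i}'\cdot b\|=\|\Phi(a\cdot\xi_{i}-\eta_{i}\cdot a)\|\leq K\|a\|\leq K(1+\varepsilon)\|b\|\,,
\]
so letting $\varepsilon\downarrow 0$ yields $\|\mathrm{ad}_{\xi_{i}',\eta_{i}'}\|\leq K$, and taking the infimum over admissible $K$ gives $K_{BASC}(B)\leq K_{BASC}(A)$. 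For a general continuous epimorphism $\Theta$, the open mapping theorem supplies a constant $M$ such that every $b$ has a preimage $a$ with $\|a\|\leq M\|b\|$, and the same calculation (with an extra factor of $M\|\Phi\|$) shows that $B$ is \BASC.

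The \BApsA\ statement is proved in the same way, replacing Theorem \ref{semicont} by Theorem \ref{bddsemiamen} and $\Phi$ by its second adjoint $\Phi^{**}\colon(A^{\#}\tensor A^{\#})^{**}\to(B^{\#}\tensor B^{\#})^{**}$. The relation $\pi_{B}^{**}\circ\Phi^{**}=(\Theta^{\#})^{**}\circ\pi_{A}^{**}$ and the $w^{*}$-continuity of $\Phi^{**}$ (together with the bimodule identities, which survive upon taking the second adjoint because $A$ acts through $\Theta^{\#}$) give exactly clauses (1)--(3) of Theorem \ref{bddsemiamen} for $B$, with the same norm control. The only slightly delicate point is bookkeeping of the module actions on the bidual, but since the left and right multiplications by a fixed element of $A$ are weak$^{*}$-weak$^{*}$ continuous, the identities extend from $A^{\#}\tensor A^{\#}$ to its bidual without difficulty. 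No essentially new obstacle arises.
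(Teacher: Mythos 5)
Your argument is correct, but it proceeds along a different route from the paper's. You push the element-level data of Theorems \ref{semicont}, \ref{bddsemicont} and \ref{bddsemiamen} forward through $\Theta^{\#}\tensor\Theta^{\#}$ (and its second adjoint), checking that this map intertwines the module actions and the product maps, preserves the normalisation $\pi(\xi_i)=e$, and is a contraction in the quotient case, with the open mapping theorem supplying the lifting constant for a general epimorphism. The paper instead works directly with derivations: given $D:B\to X$, it regards $X$ as an $A$-bimodule via $\Theta$, notes that $D\circ\Theta$ is a derivation on $A$, obtains implementing nets in $X$ itself, and transfers them to $B$ by surjectivity, with the open mapping theorem (and $K'=1$ for a quotient map) giving the bound. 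The paper's route is shorter and avoids both the characterisation theorems and the bidual bookkeeping you need for the \BApsA\ case; yours has the mild advantage of producing a single transported diagonal-type net in $B^{\#}\tensor B^{\#}$ (resp.\ its bidual), independent of any particular module or derivation, and your almost-isometric lifting for quotient maps reproduces exactly the paper's observation that $K'=1$ there. Two cosmetic points: for the \BASC\ step the relevant characterisation is Theorem \ref{bddsemicont} rather than Theorem \ref{bddsemiamen} (which you in effect use correctly anyway), and the bimodule identity for $\Phi^{**}$ follows immediately by taking second adjoints of $\Phi\circ L_a=L_{\Theta(a)}\circ\Phi$, so even the weak$^{*}$-density argument you sketch can be dispensed with.
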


\begin{proof}  Let $D:B\to X$ be a continuous derivation into a Banach $B$-bimodule $X$. Set $Y = X$ with $A$ actions $a\bullet y = \Theta(a)\cdot y$,  $y \bullet a = y \cdot \Theta(a)$.  Then $D\circ \Theta : A\to Y$ is a continuous derivation and so there exist nets $(\xi_{i})$, $(\eta_{i})$ in $Y$ such that for $a\in A$,
\[
D\circ\Theta(a) = \lim_{i}(a\bullet \xi_{i} - \eta_{i}\bullet a) =  \lim_{i} (\Theta(a)\cdot \xi_{i} - \eta_{i} \cdot \Theta(a))\,.
\]
Since $\Theta$ is surjective, $D$ is approximately semi-inner.

In the bounded case, given $\varepsilon >0$,  we can choose $(\xi_{i})$ and $(\eta_{i})$ such that for all $i$, $\|a\bullet \xi_{i} - \eta_{i}\bullet a)\|\leq (K_{BASC}(A) +\varepsilon)|a\|$, $a\in A$. That is, for $a\in A$,
\[
\|\Theta(a)\cdot \xi_{i} - \eta_{i}\cdot \Theta(a)\|\leq (K_{BASC}(A) +\varepsilon)\|a\|\,.
\]
But then for $b\in B$, and $a\in \Theta^{-1}(b)$,
\[
\|b\cdot \xi_{i} - \eta_{i}\cdot b\|\leq (K_{BASC}(A) +\varepsilon)\|a\|\,.
\]
By the open mapping theorem there is a constant $K' > 0$  such that \\ $\inf\{\|a\| :a\in \Theta^{-1}(b)\}\leq K'\|b\|$\,. Whence
\[
\|b\cdot \xi_{i} - \eta_{i}\cdot b\|\leq (K_{BASC}(A) +\varepsilon)K'\|b\|\,.
\]
Thus $B$ is \BASC.  

The boundedly semi-amenable case is similar. In the quotient case $K' = 1$.
\end{proof}

 In the case of a SIN-group, the next result follows from Theorem \ref{semi+baa=approx},  or from Theorem \ref{Segal}.
 
 \begin{theorem}\label{L1case}
 For a locally compact $G$, $L^{1}(G)$ is \ApsA\ if and only if $G$ is amenable.
 \end{theorem}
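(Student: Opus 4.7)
Plan:

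The ``if'' direction is routine: if $G$ is amenable then $L^{1}(G)$ is amenable by Johnson's classical theorem, and amenability trivially implies \ApsAy\ (take the constant net $\eta_{i}=\xi_{i}$ implementing innerness).

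For the converse, assume $L^{1}(G)$ is \ApsA. The cleanest route is to specialise Theorem~\ref{Segal} with the trivial Segal algebra $S^{1}(G)=L^{1}(G)$, as flagged in the remark preceding the statement. If one prefers an independent argument, the strategy is to first show that \ApsAy\ of $L^{1}(G)$ forces \ApAy, and then invoke the Ghahramani--Loy--Zhang characterisation that $L^{1}(G)$ is \ApA\ iff $G$ is amenable. When $G$ is SIN the reduction to \ApA\ is immediate from Theorem~\ref{semi+baa=approx}, because $L^{1}(G)$ then carries a central bounded approximate identity.

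In the general case, the reduction parallels the proof of Proposition~\ref{cbaistrong}. One invokes Theorem~\ref{semiamen} to produce nets $(\xi_{i}),(\eta_{i})\subset (L^{1}(G)^{\#}\tensor L^{1}(G)^{\#})^{**}$ with $a\cdot\xi_{i}-\eta_{i}\cdot a\to 0$ for every $a\in L^{1}(G)$ and $\pi^{**}(\xi_{i})=\pi^{**}(\eta_{i})=e$, and regularises by bracketing with the standard BAI $(e_{\alpha})$ of $L^{1}(G)$. Taking weak$^{*}$-cluster points of $e_{\alpha}\cdot\xi_{i}\cdot e_{\beta}$ and $e_{\alpha}\cdot\eta_{i}\cdot e_{\beta}$ should yield a single net behaving like an approximate virtual diagonal, from which \ApAy\ follows and then Ghahramani--Loy--Zhang finishes the job.

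The main obstacle is exactly the non-SIN case: without centrality of the BAI one cannot commute $e_{\alpha}$ past $\xi_{i}$ in the key identity used for Proposition~\ref{cbaistrong}. Handling this requires features specific to $L^{1}(G)$, most naturally the existence of quasi-central bounded approximate identities in the sense of Losert--Rindler, or a direct construction of a topological left invariant mean on $L^{\infty}(G)$ from the \ApsAy\ nets (using $\pi^{**}(\xi_{i})=e$ to guarantee the ``mean'' condition and $a\cdot\xi_{i}-\eta_{i}\cdot a\to 0$ to guarantee translation-invariance in the limit). These are precisely the structural ingredients built into Theorem~\ref{Segal}, which is why the cleanest proof is to appeal to it.
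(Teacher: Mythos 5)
The ``if'' direction and the SIN case are fine, but your treatment of the general locally compact case has a genuine gap. Theorem \ref{Segal} is stated (and proved) only for SIN groups: its (3) $\Rightarrow$ (2) direction relies on the central \BAI\ of $L^{1}(G)$ that exists precisely in the SIN case, and under the paper's definition ($S_{0}$: proper density) $L^{1}(G)$ is not even a Segal subalgebra of itself. So ``appeal to Theorem \ref{Segal}'' cannot close the non-SIN case, and your closing sentence --- that the needed structural ingredients are ``built into Theorem \ref{Segal}'' --- is circular. The remaining sketch (imitate Proposition \ref{cbaistrong} using a quasi-central \BAI, or extract an invariant mean from the nets of Theorem \ref{semiamen}) is not carried out, and the first route faces a concrete obstruction: in the argument of Proposition \ref{cbaistrong} one must commute $e_{\alpha}$ past $a$ \emph{inside} the limit over $i$, and since the implementing nets $(\xi_{i})$, $(\eta_{i})$ need not be bounded, the error terms $(e_{\alpha}a - ae_{\alpha})\cdot \xi_{i}\cdot e_{\beta}$ coming from mere asymptotic (quasi-)centrality are not controlled. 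Indeed the paper never proves that \ApsAy\ of $L^{1}(G)$ implies \ApAy\ directly at the module level; that equivalence only comes out \emph{a posteriori}, through the group.

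What the paper actually does is shorter and avoids all of this: it applies approximate semi-innerness to the single derivation $\Delta$ of \cite[Theorem 3.2]{GhaLoy}, whose target module has right $L^{1}(G)$-action given by the augmentation character. For $\phi\geq 0$ with $\|\phi\|_{1}=1$ one has $\Delta(\delta_{g})=\Delta(\delta_{g})\cdot\phi=\Delta(\delta_{g}*\phi)-\delta_{g}\cdot\Delta(\phi)$, and substituting the semi-inner approximation for $\Delta(\delta_{g}*\phi)$ and $\Delta(\phi)$ makes the $\xi_{i}$ terms cancel, leaving $\Delta(\delta_{g})=\lim_{i}(\delta_{g}\cdot\eta_{i}-\eta_{i})$; the classical invariant-mean construction then runs with the single net $(\eta_{i})$ and gives amenability of $G$ for arbitrary locally compact $G$. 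If you want to salvage your outline, this is the missing idea: test \ApsAy\ on a module whose right action factors through a character, so that semi-innerness collapses to one-sided approximate innerness.
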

 
 \begin{proof}
 Suppose that $L^{1}{G}$ is \ApsA\ and follow the classical argument  as in \cite[Theorem 3.2]{GhaLoy}.  The derivation $\Delta$ there is approximately semi-inner, giving nets $(\xi_{i}), (\eta_{i}) \subset Y^{*}$ such that
 \[
 f\cdot \delta_{e} - \delta_{e} \int_{G} f = \lim_{i} \Big( f\cdot \xi_{i} - \eta_{i} \int_{G} f\Big) \quad f\in L^{1}(G)\,.
 \]
 Taking $\phi\in L^{1}(G)$, $\phi\geq 0$, $\|\phi\| = 1$ gives
 \begin{eqnarray*}
 \Delta(\delta_{g}) &=& \Delta(\delta_{g})\cdot \phi = \Delta(\delta_{g} * \phi) -\delta_{g} \cdot \Delta(\phi)\\
 &=& \lim_{i}((\delta_{g} * \phi)\cdot \xi_{i} - \eta_{i}-\delta_{g} \cdot (\phi * \xi_{i} - \eta_{i})\\
 &=& \lim_{i} (\delta_{g} \cdot \eta_{i} - \eta_{i})\,.
 \end{eqnarray*}
 
 Now continue the argument of \cite[Theorem 3.2]{GhaLoy}, with $(\eta_{i})$ in place of $(\xi_{i})$, to get $G$ amenable.
 \end{proof}
 
 \begin{theorem}
   $L^{1}(G)^{**}$ is \ApsA\ if and only if $G$ is finite.
 \end{theorem}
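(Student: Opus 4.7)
The \emph{if} direction is immediate: for finite $G$, $L^{1}(G)^{**}=L^{1}(G)$ is a finite-dimensional group algebra of an amenable (finite) group, hence amenable, and in particular \ApsA.

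For the converse, suppose $L^{1}(G)^{**}$ is \ApsA. The strategy is to promote this property to full approximate amenability of $L^{1}(G)^{**}$ and then invoke the known theorem of Ghahramani--Loy--Zhang that \ApAy\ of $L^{1}(G)^{**}$ forces $G$ to be finite. First I would establish that $G$ is amenable: the algebra $L^{1}(G)$ embeds canonically as a closed two-sided ideal of $L^{1}(G)^{**}$ carrying a \BAI, so by Theorem \ref{parallel} it is itself \ApsA, and then Theorem \ref{L1case} yields amenability of $G$. Consequently any weak$^{*}$-accumulation point $E$ of a \BAI\ of $L^{1}(G)$ provides a right identity for $L^{1}(G)^{**}$ (under the first Arens product) that moreover acts as a two-sided identity on the image of $L^{1}(G)$.

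With $E$ in hand I would adapt the decomposition argument in the proof of Proposition \ref{cbaistrong}, using the weak$^{*}$-operator limit of right multiplication by $E$ on $X^{**}$ together with a weak$^{*}$-operator limit of left multiplication by a BAI, to split a given approximately semi-inner derivation $D:L^{1}(G)^{**}\to X^{*}$ into a summand mapping into a neo-unital part of $X^{**}$ (handled by Proposition \ref{equality1}(ii)) and summands where the action of $L^{1}(G)^{**}$ is trivial on one side (handled by part (iii) of the same Proposition). This would force $D$ to be approximately inner, so that $L^{1}(G)^{**}$ is approximately amenable, and the theorem of \cite{GhaLoyZh} then forces $G$ to be finite.

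The principal obstacle is this middle step. Proposition \ref{cbaistrong} relies essentially on a \emph{central} bounded approximate identity, whereas $L^{1}(G)^{**}$ generally only has the one-sided identity $E$, so the commutativity of the pair of bimodule morphisms used to define the direct-sum decomposition of $X^{**}$ must be recovered by different means -- most likely by exploiting the ideal structure $L^{1}(G)\lhd L^{1}(G)^{**}$ together with the fact that $E$ is a two-sided identity on this ideal. An alternative route, if the direct adaptation proves too delicate, would be to invoke Proposition \ref{quotients} to transfer \ApsA\ to the quotient $L^{1}(G)^{**}/L^{1}(G)$ and argue directly that this quotient cannot be \ApsA\ for infinite amenable $G$.
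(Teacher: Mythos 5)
Your overall strategy---reduce to \ApAy\ of $L^{1}(G)^{**}$ and quote the known result that this forces $G$ to be finite---is the right one, and your opening step (amenability of $G$ via Theorem \ref{parallel} applied to the ideal $L^{1}(G)$ and Theorem \ref{L1case}) is correct, though it turns out to be unnecessary. The genuine gap is the crucial middle step, promoting \ApsAy\ of $L^{1}(G)^{**}$ to \ApAy: this is exactly the point you leave unresolved. Proposition \ref{cbaistrong} uses centrality of the \BAI\ in an essential way (to move $e_{\alpha}\cdot\xi_{i}\cdot e_{\beta}$ inside the commutator with $a$), and $L^{1}(G)^{**}$ has no central \BAI\ in general; the suggestion to ``recover the commutativity by exploiting the ideal structure'' is a hope rather than an argument, and the fallback via Proposition \ref{quotients} would require showing that $L^{1}(G)^{**}/L^{1}(G)$ fails to be \ApsA\ for infinite amenable $G$, which you do not attempt and which is not obviously easier than the original problem. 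As written, the converse direction is therefore incomplete.

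The gap closes with a far more elementary observation, which is how the paper argues: apply Theorem \ref{parallel} to $L^{1}(G)^{**}$ \emph{itself}, not only to the ideal $L^{1}(G)$, so that \ApsAy\ gives $L^{1}(G)^{**}$ left and right approximate identities. On the other hand $L^{1}(G)^{**}$ (first Arens product) always has a right identity $E$, namely any weak$^{*}$ cluster point of a \BAI\ of $L^{1}(G)$; no amenability of $G$ is needed for this. If $(f_{\alpha})$ is a left approximate identity, then $f_{\alpha}=f_{\alpha}E\to E$ in norm, whence $Ea=\lim_{\alpha}f_{\alpha}a=a$ for every $a$, so $E$ is a two-sided identity and $L^{1}(G)^{**}$ is unital. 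The unital case of Proposition \ref{cbaistrong} (equivalently Theorem \ref{semi+baa=approx}) now applies with no centrality difficulty at all, so $L^{1}(G)^{**}$ is \ApA, and $G$ is finite by \cite[Theorem 3.3]{GhaLoy} (note this result is in \cite{GhaLoy}, not \cite{GhaLoyZh}). In short: instead of redoing the decomposition argument with a one-sided identity, use the approximate identities furnished by approximate semi-amenability to upgrade the right identity to a genuine identity first.
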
 
 \begin{proof}
 $L^{1}(G)^{**}$ has a right identity, and the hypothesis means it has left and right approximate identities. Thus it has an identity.  But then it is \ApA, Corollary \ref{semi+baa=approx}, so $G$ is finite by \cite[Theorem 3.3]{GhaLoy}
 \end{proof}

\section{Examples}
\subsection{Segal algebras}

For a locally compact group $G$ a subspace $S(G)$ of $L^{1}(G)$ is a Segal algebra if it satisfies the following:
\begin{itemize}
\item[$(S_{0})$]  $S(G)$ is properly dense in $L^{1}(G)$\,
\item[$(S_{1})$]  $S(G)$ is a Banach space under some norm $\|\cdot\|_{S(G)}$ dominating $\|\cdot \|_{1}$\,,
\item[$(S_{2})$]  $S(G)$ is left translation invariant\,,
\item[$(S_{3})$]  the norm $\|\cdot\|_{S(G)}$ is left invariant.
\end{itemize}
These refer to a Banach space, but it follows that $S(G)$ is a Banach algebra under $\|\cdot \|_{S(G)}$. Indeed, $S(G)$ is a left ideal of $L^{1}(G)$, and  $\|h * f\|_{S(G)} \leq  \|h\|_{1} \|f\|_{S(G)}$, $h\in L^{1}(G), f\in S(G)$.  See \cite[A3]{ReiSteg} for further details.

Recall that a locally compact group $G$ is called SIN if it has a basis of compact neighbourhoods of the identity each of which is invariant under all the inner automorphisms of the group.  This is equivalent to $L^{1}(G)$ having a central \BAI\ 
\cite[Theorem 1(b) and Remark]{KaR}.

\begin{theorem}\label{Segal} 
Let $G$ be a SIN group.  Then the following are equivalent:
\begin{enumerate}
\item  There exists a Segal subalgebra of $L^{1}(G)$ that is \ApsA.
\item  Every  Segal subalgebra of $L^{1}(G)$ is \ApsA.
\item $G$ is amenable.
\end{enumerate}
\end{theorem}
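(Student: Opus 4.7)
\medskip\noindent\textbf{Plan.} The proof splits into three implications. The implication (2) $\Rightarrow$ (1) is immediate once one observes that every locally compact group admits a proper Segal subalgebra (for instance $L^{1}(G)\cap L^{p}(G)$ for $1<p\le\infty$, under the norm $\|\cdot\|_{1}+\|\cdot\|_{p}$), so the substantive content lies in (1) $\Rightarrow$ (3) and (3) $\Rightarrow$ (2).

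For (1) $\Rightarrow$ (3), the plan is to replay the Johnson-style derivation argument used in Theorem \ref{L1case}, transferred to $S(G)$. The bimodule $Y$ appearing there is naturally an $S(G)$-bimodule via the continuous inclusion $S(G)\hookrightarrow L^{1}(G)$ (recall $\|\cdot\|_{S(G)}\ge\|\cdot\|_{1}$), so the standard derivation $\Delta: S(G)\to Y^{*}$ is defined by restriction. By the \ApsA\ hypothesis, $\Delta$ is approximately semi-inner, supplying nets $(\xi_{i}),(\eta_{i})\subset Y^{*}$. Fixing a normalized nonnegative $\phi\in S(G)$ (which exists by density of $S(G)$ in $L^{1}(G)$ after truncation and normalization), the same chain of identities as in Theorem \ref{L1case}—crucially, using $\Delta(\delta_{g}\ast\phi)=\delta_{g}\cdot\Delta(\phi)+\Delta(\delta_{g})\int_{G}\phi$ to cancel the $\xi_{i}$-terms—reduces the implementation to $\Delta(\delta_{g})=\lim_{i}(\delta_{g}\cdot\eta_{i}-\eta_{i})$, from which one extracts a topologically invariant mean on $L^{\infty}(G)$ and hence amenability of $G$.

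The main content is (3) $\Rightarrow$ (2). Assuming $G$ amenable SIN, Johnson's theorem gives a virtual diagonal $M\in(L^{1}(G)\tensor L^{1}(G))^{**}$, and the SIN hypothesis (via \cite{KaR}) furnishes a central \BAI\ $(e_{\alpha})$ for $L^{1}(G)$ which, by a standard mollification with a compactly supported invariant neighbourhood basis, can be arranged in $C_{c}(G)\cap S(G)$. The plan is to verify the conditions of Theorem \ref{semiamen} for $S(G)$ by taking a bounded approximate diagonal $(m_{\beta})\subset L^{1}(G)\tensor L^{1}(G)$ from $M$ and forming the symmetrically mollified candidates (and unit-tensor adjustments so that $\pi^{**}$ equals $e$) arising from $e_{\alpha}\cdot m_{\beta}\cdot e_{\alpha}$. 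Centrality of $e_{\alpha}$, together with the left-ideal property of $S(G)$ in $L^{1}(G)$, guarantees that $e_{\alpha}\cdot m_{\beta}\cdot e_{\alpha}$ lies in $S(G)\tensor S(G)$, and centrality also gives the key identity
\[
f\cdot(e_{\alpha}\cdot m_{\beta}\cdot e_{\alpha})-(e_{\alpha}\cdot m_{\beta}\cdot e_{\alpha})\cdot f=e_{\alpha}\cdot(f\cdot m_{\beta}-m_{\beta}\cdot f)\cdot e_{\alpha}\,,
\]
which tends to zero by the approximate-diagonal property of $(m_{\beta})$.

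The principal obstacle is that the Segal norm $\|\cdot\|_{S(G)}$ strictly dominates $\|\cdot\|_{1}$ and $\|e_{\alpha}\|_{S(G)}$ is in general unbounded, so the estimates available from the amenability of $L^{1}(G)$ live only in the $L^{1}\tensor L^{1}$ norm. The plan to circumvent this is to use the bidual formulation of Theorem \ref{semiamen}: the required convergence is in the bidual $(S(G)^{\#}\tensor S(G)^{\#})^{**}$, and a Goldstine/Hahn--Banach argument of the sort employed in the proof of Proposition \ref{cbaistrong} converts the $L^{1}\tensor L^{1}$-norm estimates into legitimate weak$^{*}$-limits on the appropriate bidual, after which Theorem \ref{equiv} delivers \ApsA\ from the weak$^{*}$ variant. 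The SIN hypothesis is indispensable throughout: without centrality of $(e_{\alpha})$ the mollification identity above collapses, and the entire transfer from $L^{1}(G)$-level amenability to $S(G)$-level approximate semi-amenability breaks down.
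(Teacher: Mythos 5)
Your splitting of the theorem is sensible, and two of your three implications are fine: (2) $\Rightarrow$ (1) is indeed trivial once a Segal subalgebra is exhibited, and your (1) $\Rightarrow$ (3) — restricting the concrete derivation of Theorem \ref{L1case} to $S(G)$ (legitimate, since $\|\cdot\|_{S(G)}\geq\|\cdot\|_{1}$ and $\delta_{g}*\phi\in S(G)$ by translation invariance) and cancelling the $\xi_{i}$-terms to reach $\Delta(\delta_{g})=\lim_{i}(\delta_{g}\cdot\eta_{i}-\eta_{i})$ — is a valid route, though different from the paper's, which instead observes that the augmentation ideal of $S(G)$ is complemented, applies Theorem \ref{proj} to get a right approximate identity, transfers it to the augmentation ideal of $L^{1}(G)$ via \cite{CGZ}, and invokes \cite{Willis}. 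The genuine gap is in your (3) $\Rightarrow$ (2), at exactly the point you flag as the ``principal obstacle''. The Goldstine/Hahn--Banach device of Proposition \ref{cbaistrong} does not do what you ask of it: smallness of $f\cdot m_{\beta}-m_{\beta}\cdot f$ in the $L^{1}\tensor L^{1}$ norm gives no control of pairings against general elements of $(S(G)^{\#}\tensor S(G)^{\#})^{*}$, since $\|\cdot\|_{S(G)}$ strictly dominates $\|\cdot\|_{1}$ and $(L^{1}\tensor L^{1})^{*}$ sits inside $(S(G)\tensor S(G))^{*}$ as a proper subspace; moreover the virtual diagonal $M$ lives in $(L^{1}(G)\tensor L^{1}(G))^{**}$, which is the \emph{target} of the canonical map from $(S(G)\tensor S(G))^{**}$, not a subspace of $(S(G)^{\#}\tensor S(G)^{\#})^{**}$, so neither $M$ nor weak$^{*}$ limit points taken in the $L^{1}$-bidual are admissible elements for Theorem \ref{semiamen} or \ref{semiweak}. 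The Goldstine argument in Proposition \ref{cbaistrong} serves a different purpose (replacing bidual-valued implementing nets by module-valued ones for a fixed dual module); it cannot upgrade estimates in a weaker norm into weak$^{*}$ statements over the stronger dual.

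The good news is that no bidual is needed: your mollified elements already satisfy honest $S(G)\tensor S(G)$ estimates, and an iterated limit closes the argument. With $d_{\alpha\beta}=e_{\alpha}\cdot m_{\beta}\cdot e_{\alpha}$ and $u_{\alpha\beta}=\pi(d_{\alpha\beta})$, centrality and the left-ideal inequality $\|h*f\|_{S(G)}\leq\|h\|_{1}\|f\|_{S(G)}$ give
\[
\|f\cdot d_{\alpha\beta}-d_{\alpha\beta}\cdot f\|_{S(G)\tensor S(G)}\leq\|e_{\alpha}\|_{S(G)}^{2}\,\|f\cdot m_{\beta}-m_{\beta}\cdot f\|_{L^{1}\tensor L^{1}}\,,
\]
\[
\|f*u_{\alpha\beta}-f\|_{S(G)}\leq\|e_{\alpha}\|_{1}\|e_{\alpha}\|_{S(G)}\|f*\pi(m_{\beta})-f\|_{1}+\|f*e_{\alpha}^{2}-f\|_{S(G)}\,,
\]
and similarly on the other side. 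So for a finite set $F\subset S(G)$ and $\varepsilon>0$, first choose $\alpha$ (using the \cite{KaR} approximate identity of $S(G)$, central and $\|\cdot\|_{1}$-bounded) to make the terms not involving $\beta$ small, and then choose $\beta$ depending on $\alpha$ to absorb the factor $\|e_{\alpha}\|_{S(G)}^{2}$; with $\xi_{\alpha\beta}=e\otimes e-u_{\alpha\beta}\otimes e+d_{\alpha\beta}$ and $\eta_{\alpha\beta}=e\otimes e-e\otimes u_{\alpha\beta}+d_{\alpha\beta}$ one then verifies Theorem \ref{semicont} directly, with $\pi(\xi_{\alpha\beta})=\pi(\eta_{\alpha\beta})=e$ exactly. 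Repaired in this way your proof of (3) $\Rightarrow$ (2) is genuinely different from the paper's, which avoids diagonals altogether: there one fixes an arbitrary derivation $D:S(G)\to X^{*}$ and averages the orbit maps $t\mapsto(\delta_{t^{-1}}*e_{i}^{2})\cdot D(\delta_{t}*e_{i}^{2})$ against an invariant mean on the bounded left-uniformly continuous functions, producing the implementing functionals directly; your route instead leans on Johnson's theorem for $L^{1}(G)$ and the intrinsic characterization of approximate semi-contractibility.
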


\begin{proof}  (1) $\Rightarrow$ (3).  
Suppose that the Segal subalgebra $S(G)$ is \ApsA.  The closed ideal $\{ f\in S(G) : \int_{G} f(x) dx = 0\}$  is clearly complemented in $S(G)$, so by Theorem \ref{proj} has a right approximate identity. But then by \cite [Proposition 5.4 (i)]{CGZ} the augmentation ideal $\{ f\in L^{1}(G) : \int_{G}f(x) dx = 0\}$ has a right approximate identity. So by \cite[Theorem 5.2]{Willis}, $G$ is amenable.

(3) $\Rightarrow$ (2).
Let $G$ be an amenable group, let $X$ be a Banach $S(G)$-bimodule, and $D:S(G)\to X^{*}$ a continuous derivation. 
By \cite[Theorem 1(b) and Remark]{KaR} there is
 an approximate identity of $S(G)$, $(e_{i})$,  that is a central and bounded in $L^{1}(G)$. 
  Take elements $g, h\in G$. Then
\begin{equation}\label{one}
D(\delta_{gh} * e_{i}^{2}) =D(\delta_{g} * e_{i})\cdot  (\delta_{h} * e_{i}) +  (\delta_{g} * e_{i})\cdot D( \delta_{h} * e_{i})\,.
\end{equation}
 Letting $\delta_{(gh)^{-1}} * e_{i}^{2}$ act on the left of \eqref{one}, and noting that $(e_{i})$ is central,  we find
 \begin{eqnarray}\label{two}
( \delta_{(gh)^{-1}} * e_{i}^{2}) \cdot D(\delta_{gh} * e_{i}^{2}) &=&  (\delta_{h^{-1}} * e_{i})   * ( \delta_{g^{-1}} * e_{i}) \cdot D(\delta_{g} * e_{i})\cdot  (\delta_{h} * e_{i})\nonumber \\
& &\qquad + (\delta_{h^{-1}} * e_{i}^{3})\cdot D( \delta_{h} * e_{i})\,.
 \end{eqnarray}
 
 Now $S(G) = L^{1}(G) * S(G)$ by the Cohen factorization theorem for modules, \cite[ VIII.32.50]{HewRoss}, so given
 $\varphi\in  S(G)$, $\varphi = h * \vartheta$ for some $h\in L^{1}(G)$, $\vartheta\in S(G)$.  Since $t\mapsto \delta_{t} * h$ from $G$ to $L^{1}(G)$ is left-uniformly continuous, the same is true for the map $t \mapsto \delta_{t} * \varphi$  from $G$ to $S(G)$.  Thus for each $x\in X$, and each index $i$,
  the functions
  \begin{eqnarray*}
  t \mapsto \langle \delta_{t^{-1}} * e_{i}^{2})\cdot D( \delta_{t} * e_{i}^{2}), x\rangle\,,\\
 s \mapsto \langle \delta_{s^{-1}} * e_{i})\cdot D( \delta_{s} * e_{i}), x\rangle\,, 
 \end{eqnarray*}
 are bounded and left-uniformly continuous on $G$.
 
 Let $M$ be an invariant mean on the space 
 of bounded left-uniformly continuous functions on $G$.  Define functionals $x_{i}^{*}, y_{i}^{*}\in X^{*}$ by
 \begin{eqnarray*}
 \langle x_{i}^{*}, x\rangle &=& \langle M,  t \mapsto \langle \delta_{t^{-1}} * e_{i}^{2})\cdot D( \delta_{t} * e_{i}^{2}), x\rangle \rangle\,,\\
 \langle y_{i}^{*}, x \rangle &=& \langle M,  s \mapsto \langle \delta_{s^{-1}} * e_{i})\cdot D( \delta_{s} * e_{i}), x\rangle \rangle\,.
 \end{eqnarray*}
  
  Then for $x\in X$, for any $s\in G$,  invariance of $M$  gives
  \begin{eqnarray*}
\langle x_{i}^{*}, x\rangle &=& \langle M,   t \mapsto \langle \delta_{t^{-1}} * e_{i}^{2})\cdot D( \delta_{t} * e_{i}^{2}), x\rangle \rangle \nonumber\\
 &=& \langle M,  t \mapsto  \langle(\delta_{(ts)^{-1}} * e_{i}^{2}) \cdot D(\delta_{ts} * e_{i}^{2}), x\rangle\rangle \,.
  \end{eqnarray*}
  So by \eqref{two},
  \begin{eqnarray}\label{thirteen}
 \langle  x_{i}^{*}, x\rangle&=& \langle M, t \mapsto \langle (\delta_{s^{-1}} * e_{i}) * ( \delta_{t^{-1}} *e_{i}) \cdot D(\delta_{t} * e_{i})\cdot  (\delta_{s} * e_{i})), x\rangle \rangle\nonumber\\
 & & \qquad + \langle (M\mapsto \langle(\delta_{s^{-1}} * e_{i}^{3})\cdot D( \delta_{s} * e_{i}), x \rangle\rangle\,.
  \end{eqnarray}
 
 Now
 \begin{eqnarray*}
 & & \kern -1cm \langle M, t \mapsto \langle (\delta_{s^{-1}} * e_{i})   * (\delta_{t^{-1}} * e_{i}) \cdot D(\delta_{t} * e_{i})\cdot  (\delta_{s} * e_{i}), x\rangle \rangle\nonumber\\
 &=& \langle M, t \mapsto(\delta_{t^{-1}}* e_{i}) \cdot D(\delta_{t} * e_{i}), (\delta_{s} * e_{i})\cdot x *(\delta_{s^{-1}} * e_{i}) \rangle \rangle\nonumber\\
  &=& \langle y_{i}^{*}, \langle (\delta_{s} * e_{i})\cdot x *(\delta_{s^{-1}} * e_{i}) \rangle\nonumber\\
 &=& \langle (\delta_{s^{-1}} * e_{i}) \cdot y_{i}^{*}\cdot  (\delta_{s} * e_{i}), x \rangle\,.
\end{eqnarray*}
 
Thus \eqref{thirteen} becomes
 \begin{eqnarray*}
  \langle x_{i}^{*}, x\rangle&=& \langle (\delta_{s^{-1}} * e_{i}) \cdot y_{i}^{*}\cdot  (\delta_{s} * e_{i}), x \rangle\nonumber\\
 & &\qquad+\langle (\delta_{s^{-1}} * e_{i}^{3})\cdot D( \delta_{s} * e_{i}), x\rangle\,.
  \end{eqnarray*}
   That is,
   \begin{equation}
x_{i}^{*} = (\delta_{s^{-1}} * e_{i}) \cdot y_{i}^{*}\cdot  (\delta_{s} * e_{i}) + (\delta_{s^{-1}} * e_{i}^{3})\cdot D( \delta_{s} * e_{i})\,. \nonumber
   \end{equation}
   
   Taking the action  on the left by $\delta_{s} * e_{i}$, and recalling the centrality of $e_{i}$,
   \begin{eqnarray*}
    \delta_{s} * e_{i}\cdot x_{i}^{*} &=& e_{i}^{2} \cdot y_{i}^{*}\cdot  (\delta_{s} * e_{i}) + e_{i}^{4}\cdot D( \delta_{s} * e_{i})\nonumber\\
    &=& e_{i}^{2} \cdot y_{i}^{*}\cdot  (e_{i} * \delta_{s}) +D( \delta_{s} * e_{i}^{5}) - D(e_{i}^{4}) \cdot (e_{i} * \delta_{s})\,.
   \end{eqnarray*}
 
 Hence,
 \begin{equation}\label{fourteen}
 D( \delta_{s} * e_{i}^{5}) =   \delta_{s}\cdot (e_{i}\cdot x_{i}^{*})  - (e_{i}^{2} \cdot y_{i}^{*}\cdot e_{i} - D(e_{i}^{4})\cdot e_{i} )\cdot \delta_{s} ) \,.
 \end{equation}
 
 It is immediate that \eqref{fourteen}  holds for any measure $\mu$ on $G$ which is a finite linear combination of point masses. Then since
the discrete measures of finite support are dense in $M(G)$ in the strong operator topology of $M(G)$ acting on $S(G)$,
\eqref{fourteen} holds for any $\mu = \varphi\in S(G)$.  Hence we can write
\begin{equation}\label{twentyfive}
D(\varphi * e_{i}^{5}) = \varphi \cdot \xi_{i} - \eta_{i}\cdot \varphi\,,\quad (\varphi\in S(G))\,,
\end{equation}
where $\xi_{i} = e_{i} \cdot x^{*}_{i} $ and $\eta_{i} =  (e_{i}^{2}\cdot y^{*}_{i} - D(e_{i}^{4}))\cdot e_{i}$.  

For $\varphi\in S(G)$, certainly  $\|\varphi*e_i - \varphi\|_{S(G)}\stackrel{i}{\to} 0$.  Thus
\begin{eqnarray*}
\|\varphi*e_{i}^5 - \varphi\|_{S(G)} &\leq& \sum_{k=1}^4\|(\varphi*e_i - \varphi)*e_i^k\|_{S(G)}+\|\varphi*e_i - \varphi\|_{S(G)}\\
&\leq&  \sum_{k=1}^4\|(\varphi*e_i - \varphi)\|_{S(G)} \|e_i^k\|_1+\|\varphi*e_i - \varphi\|_{S(G)}\stackrel{i}{\to} 0\,.
\end{eqnarray*}
Here we have used the $\|\cdot\|_1$-boundedness of $(e_i)$.  
Taking the limit over $i$ in \eqref{twentyfive}, it follows that
$$
D(\varphi) = \lim_{i} ( \varphi\cdot \xi_{i} - \eta_{i}\cdot \varphi) \qquad (\varphi\in S(G)\,,
$$
so that $D$ is approximately semi-inner. 

(2) $\Rightarrow$ (1) is trivial.
  \end{proof}
   
 \begin{remark}
 A non-trivial Segal algebra $S(G)$ is never \ApA, \cite{Alag}. See also \cite[\S4]{DL}, \cite[\S 3]{CG}.  Theorem \ref{Segal} gives the SIN group case of Theorem \ref{L1case}.
 \end{remark}


\subsection{Banach function algebras}
\oomit{

\begin{theorem}
Let $(A, \|\cdot\|_A)$ be a Banach algebra with norm $\|\cdot\|_A$, $J$ a proper (dense) ideal in $A$ which is a Banach algebra under its own norm $\|\cdot\|_J$ with $\|\cdot \|_{A}\leq \|\cdot \|_J$, and such that $\|ab\|_J \leq \|a\|_A \|b\|_J$ , $a\in A, b\in J$.  Suppose further that $(A, \|\cdot\|_A)$ has a central bounded approximate identity $(e_\alpha)\subset J$, and that $(e_\alpha)$ is an approximate identity in $(J, \|\cdot \|_J)$.  Then if $A$ is amenable, $J$ is pseudo-amenable.
\end{theorem}
\begin{remark}  Typical examples are Segal algebras over amenable SIN groups.
   Note that $(e_\alpha)$ cannot be  $\|\cdot\|_J$ - bounded.  The argument does not work for Schatten $p$-classes in $\mathcal{K}(H)$ because of lack of  a \emph{central} approximate identity. 
\end{remark}
\begin{proof} Let $m\in A\tensor A$, so that
$$
m = \sum_k a_k\otimes b_k \quad {\rm with }\quad \sum_k \|a_k\|_A\,\|b_k\|_A < \infty\,.
$$
Now for $ c, d\in J$,
$$
  \|ca_k\|_J \leq \|c\|_J \|a_k\|_A\,\ {\rm and }\ \  \|b_k d\|_J \leq \|d\|_J \|b_k\|_A\,, 
 $$
 so that
 $$
 \sum_k c a_k \otimes b_k d
  $$
is convergent in $J\tensor J$. Furthermore,
$$
\|c\cdot m \cdot d\|_{J\tensor J} = \|\sum_k c a_k \otimes b_k d\|_{J\tensor J}\leq \|c\|_J\|d\|_J \| m \|_{A\tensor A}\,.
$$
Take  a (bounded) approximate diagonal $(\xi_i)\subset A\tensor A$  for $A$.  Write 
$$
M_\lambda = \sum_k a_k^\lambda\otimes b_k^\lambda \,,\quad  \sum_k \|a^\lambda_k\|_A\,\|b^\lambda_k\|_A <\infty\,.
$$
Take $b\in J$.  Then 
\begin{eqnarray*}
& \|be_\alpha \cdot M_\lambda \cdot e_\alpha  - e_\alpha\cdot M_\lambda \cdot e_\alpha b \|_{J\tensor J}=
\| e_\alpha b \cdot M_\lambda \cdot e_\alpha - e_\alpha\cdot M_\lambda \cdot b e_\alpha \|_{J\tensor J}\\ \\
& \leq \| e_\alpha\|^2_J \|b\cdot M_\lambda - M_\lambda\cdot b\|_{A\tensor A}\,.
\end{eqnarray*}
%
%
We also have
\begin{eqnarray*}
&&\pi(e_\alpha \cdot M_\lambda \cdot e_\alpha) b - b = e_\alpha \pi(M_\lambda)e_\alpha b - b \\
&&= e_\alpha (\pi(M_\lambda) b - b)e_{\alpha} + e_\alpha (be_{\alpha} - b)+ (e_{\alpha} b - b)\,,
\end{eqnarray*}
so that 
$$
\|\pi(e_\alpha \cdot M_\lambda \cdot e_\alpha) b - b \|_{J} \leq \|e_{\alpha}\|_{J}^{2} \| \pi(M_\lambda) b - b\|_{A}
+(\|e_{\alpha}\|_{A}+1)\|be_{\alpha} - b\|_{J}\,.
$$

Set $K = 1+ \sup_{\alpha, \lambda} \|e_\alpha\|_A\|M_\lambda\|_{A\tensor A}$, take $b_1, b_2, \ldots, b_n\in J$, and take $\varepsilon > 0$.  Choose $\alpha$ such that each $\|b_j e_\alpha - b_j\|_A < \varepsilon /(2nK)$. For this $\alpha$,  choose $\lambda$ such that   each $\|b_j \cdot M_\lambda - M_\lambda \cdot b_j\|_{A\tensor A}\leq \varepsilon/(K\|e_\alpha\|_J^2)$, and $\|\pi(M_\lambda)b_j - b_j\|_A < \varepsilon/(2K\|e_\alpha\|_J)$.  We then have, for each $b_{j}$,
\begin{eqnarray*}
&&\|b_j\cdot (e_\alpha\cdot M_\lambda\cdot e_\alpha) - (e_\alpha\cdot M_\lambda\cdot e_\alpha)\cdot  b_{j}\|_{J\tensor J } < \varepsilon\,,\\
&&\hskip 1.3cm \|\pi(e_\alpha \cdot M_\lambda \cdot e_\alpha) b_j - b_j \|_J < \varepsilon\,.
\end{eqnarray*}
The existence of an approximate diagonal in $J$ is now standard.
\end{proof}

It is of interest to note exactly how non-centrality causes a difficulty.
Take  a (bounded) approximate diagonal $(\xi_i)\subset A\tensor A$  for $A$.  Write 
$$
M_\lambda = \sum_k a_k^\lambda\otimes b_k^\lambda \,,\quad  \sum_k \|a^\lambda_k\|_A\,\|b^\lambda_k\|_A <\infty\,.
$$
Looking to show  that  $(e_{\alpha} \cdot M_{\lambda} \cdot e_{\alpha})$ gives an approximate diagonal in $J$, take $b\in J$.  Then
\begin{eqnarray*}
&&\kern -1cm \pi(e_\alpha \cdot M_\lambda \cdot e_\alpha) b - b = e_\alpha \pi(M_\lambda)e_\alpha b - b\\
&&= e_\alpha (\pi(M_\lambda) b - b)e_{\alpha} + e_\alpha (be_{\alpha} - b)+ (e_{\alpha} b - b)-e_\alpha\pi(M_\lambda)(be_\alpha - e_\alpha b)\,,
\end{eqnarray*}
so that 
\begin{eqnarray*}
\|\pi(e_\alpha \cdot M_\lambda \cdot e_\alpha) b - b \|_{J} &\leq& \|e_{\alpha}\|_{J}^{2} \| \pi(M_\lambda) b - b\|_{A}
+\|e_{\alpha}\|_{A}\|be_{\alpha} - b\|_{J}\\
&+& \|e_{\alpha}b - b\|_{J}+ \|e_\alpha\|_A \| \pi(M_\lambda)\|_{A} \|be_\alpha - e_\alpha b\|_J\,.
\end{eqnarray*}
So no difficulties arise here,  first choose $\alpha$ so that second and third terms are small, then, for this $\alpha$, choose $\lambda$ so first term is small.

However,
\begin{eqnarray*}
&& be_\alpha \cdot M_\lambda \cdot e_\alpha  - e_\alpha\cdot M_\lambda \cdot e_\alpha b\\
&& =
e_\alpha (b \cdot M_\lambda - M_\lambda \cdot b) e_\alpha
+(be_\alpha  - e_\alpha b)\cdot M_\lambda\cdot  e_\alpha + e_\alpha\cdot M_\lambda\cdot (be_\alpha  - e_\alpha b)\,. 
\end{eqnarray*}
Thus we have the following estimate
\begin{eqnarray*}
 &&\kern -0.5cm \|be_\alpha \cdot M_\lambda \cdot e_\alpha  - e_\alpha\cdot M_\lambda \cdot e_\alpha b \|_{J\tensor J}\\
 && \phantom{iyrtrug}\leq \| e_\alpha\|^2_J \|b\cdot M_\lambda - M_\lambda\cdot b\|_{A\tensor A}+ 2\|be_\alpha  - e_\alpha b\|_J \|e_{\alpha}\|_J \|M_\lambda\|_{A\tensor A}\,.
 \end{eqnarray*}
It is the last term  $\|be_\alpha  - e_\alpha b\|_J \|e_{\alpha}\|_J$  that is the difficulty. If $(e_\alpha)$ was quasi-bounded in $J$, then the argument would proceed as before. But this is a most unlikely scenario, and certainly fails in the case of Schatten classes, \cite[\S1.9]{DL}.

\begin{example}\label{ellp}
For $1\leq p < \infty$, the algebra $\ell^p$ under pointwise operations is not \ApA, \cite{DLZ, CG}.  However, derivations from $\ell^p$ are always approximately semi-inner, \cite[Example 3.7]{GhaLoy2}.  
\end{example}
}

\begin{theorem}\label{BFAG}
Let $A$ be a Banach function algebra on a discrete set $S$. 
Suppose that $A$ has  an approximate identity of elements of finite support. Then
\begin{enumerate}
\item[\rm(i)] $A$ is approximately semi-contractible.
\item[\rm(ii)]  if the \AI\ is multiplier bounded then $A$ is \BASC.
\item[\rm(iii)]  if the \AI\ is quasi-bounded then $A$ is \AC.
\end{enumerate}
\end{theorem}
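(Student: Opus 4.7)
The plan is to verify the conditions of Theorem \ref{semicont} (and Theorem \ref{bddsemicont} for (ii)) by mimicking the explicit construction used for $\ell^{p}$ in Example \ref{ellpex}. Writing $(e_{\alpha})$ for the finite-support \AI, I intend to produce, for each $\alpha$, elements $\xi_{\alpha}, \eta_{\alpha} \in A\tensor A \subseteq A^{\#}\tensor A^{\#}$ satisfying $\pi(\xi_{\alpha}) = \pi(\eta_{\alpha}) = e$ and $a\cdot\xi_{\alpha} - \eta_{\alpha}\cdot a \to 0$, with the norm control required for parts (ii)--(iii).

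The first step is structural. Setting $B_{\alpha} := \{f \in A : \operatorname{supp}(f) \subseteq \operatorname{supp}(e_{\alpha})\}$, one observes that $B_{\alpha}$ is a finite-dimensional subalgebra of $A$, commutative, and free of nilpotents (any nilpotent function must vanish pointwise). It therefore decomposes as a direct sum of minimal ideals, giving pairwise orthogonal idempotents $p_{1}^{\alpha},\ldots,p_{k_{\alpha}}^{\alpha}$ and scalars $c_{i}^{\alpha} \in \mathbb{C}$ with $e_{\alpha} = \sum_{i} c_{i}^{\alpha} p_{i}^{\alpha}$. The crucial point is that each $p_{i}^{\alpha}$ is also a \emph{minimal} idempotent of $A$: any idempotent of $A$ dominated by $p_{i}^{\alpha}$ has support in $\operatorname{supp}(e_{\alpha})$, hence lies in $B_{\alpha}$. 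Thus for every $a \in A$ one obtains the character identity $a p_{i}^{\alpha} = p_{i}^{\alpha} a = \phi_{i}^{\alpha}(a)\, p_{i}^{\alpha}$, which is the ingredient that makes the diagonal part of the construction cancel.

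Following the $\ell^{p}$ template, I set
\[
\xi_{\alpha} = e\otimes e - e_{\alpha}\otimes e + \sum_{i} c_{i}^{\alpha}\, p_{i}^{\alpha}\otimes p_{i}^{\alpha}, \qquad
\eta_{\alpha} = e\otimes e - e\otimes e_{\alpha} + \sum_{i} c_{i}^{\alpha}\, p_{i}^{\alpha}\otimes p_{i}^{\alpha}.
\]
Then $\pi(\xi_{\alpha}) = \pi(\eta_{\alpha}) = e - e_{\alpha} + e_{\alpha} = e$, and the identity $a p_{i}^{\alpha} = p_{i}^{\alpha} a = \phi_{i}^{\alpha}(a) p_{i}^{\alpha}$ forces the diagonal contributions to $a\cdot\xi_{\alpha}$ and $\eta_{\alpha}\cdot a$ to coincide, so
\[
a\cdot\xi_{\alpha} - \eta_{\alpha}\cdot a = (a - ae_{\alpha})\otimes e - e\otimes (a - ae_{\alpha}),
\]
of projective-tensor norm at most $2\|a - ae_{\alpha}\| \to 0$. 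Theorem \ref{semicont} then gives (i).

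For (ii), if $M = \sup_{\alpha} \|e_{\alpha}\|_{\mathrm{mult}} < \infty$, the same identity yields $\|a\cdot\xi_{\alpha} - \eta_{\alpha}\cdot a\| \leq 2(1+M)\|a\|$, so Theorem \ref{bddsemicont} delivers \BASC. For (iii), a norm-bounded \AI\ in the commutative algebra $A$ is automatically a central \BAI, so combining the \ApsAy\ established in (i) with Theorem \ref{semi+baa=approx} gives \ApAy\ of $A$, which coincides with \ACy\ by \cite{GhaLoyZh}. The main obstacle is the argument of the second paragraph --- identifying the idempotents $p_{i}^{\alpha}$ of the finite-dimensional subalgebra $B_{\alpha}$ as minimal idempotents of the ambient algebra $A$; once this is in hand, the required cancellation is forced and everything else is a direct computation of norms.
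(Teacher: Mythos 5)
Your parts (i) and (ii) are, in substance, the paper's own proof: the paper takes exactly the nets $\xi_{\alpha}=e\otimes e-e_{\alpha}\otimes e+\sum_{i\in S_{\alpha}}e_{\alpha}\delta_{i}\otimes\delta_{i}$ and $\eta_{\alpha}=e\otimes e-e\otimes e_{\alpha}+\sum_{i\in S_{\alpha}}e_{\alpha}\delta_{i}\otimes\delta_{i}$ (so $c^{\alpha}_{i}p^{\alpha}_{i}=e_{\alpha}(i)\delta_{i}$ in your notation), gets $a\cdot\xi_{\alpha}-\eta_{\alpha}\cdot a=(a-ae_{\alpha})\otimes e-e\otimes(a-ae_{\alpha})$, and the same bound $2(1+K)\|a\|$ for (ii), feeding Theorems \ref{semicont} and \ref{bddsemicont} as you do. Your only real deviation is to manufacture the idempotents inside $B_{\alpha}$ instead of using point masses, and there one step is unsound as written: from ``no idempotent of $A$ lies properly below $p^{\alpha}_{i}$'' you cannot conclude the character identity $ap^{\alpha}_{i}=\phi^{\alpha}_{i}(a)p^{\alpha}_{i}$ for all $a\in A$ --- the identity of any unital commutative algebra with no nontrivial idempotents is ``minimal'' in that sense, yet $1\cdot A=A$. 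The repair is your own support observation applied to arbitrary elements rather than to idempotents: for $a\in A$ the function $ap^{\alpha}_{i}$ is supported in $\operatorname{supp}(p^{\alpha}_{i})\subseteq\operatorname{supp}(e_{\alpha})$, hence lies in $B_{\alpha}$, and $B_{\alpha}p^{\alpha}_{i}=\bC p^{\alpha}_{i}$ because $\bC p^{\alpha}_{i}$ is a minimal ideal of the finite-dimensional semisimple algebra $B_{\alpha}$. (Since $A$ separates the points of $S$, this identity in fact forces each $p^{\alpha}_{i}$ to be a point mass, so your elements coincide with the paper's; note also that they lie in $A^{\#}\tensor A^{\#}$, not in $A\tensor A$ as you state.)

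The genuine gap is (iii). The paper gives no argument there: clause (iii) is quoted from \cite[Proposition 4.2]{GhaSt}, and its hypothesis is \emph{quasi}-boundedness of the \AI, which you silently replace by norm-boundedness: your route needs the \AI\ to be an honest (central, by commutativity) \BAI\ in order to invoke Theorem \ref{semi+baa=approx} (Proposition \ref{cbaistrong}). If quasi-bounded meant norm-bounded, the statement would simply say ``bounded'' and (iii) would indeed be the one-line consequence you describe; the content of (iii) is that a weaker growth-type condition on an \AI\ that need not be bounded in norm still yields \ACy, and your argument does not touch that case. What you have proved is the correct but weaker assertion that a norm-bounded finite-support \AI\ makes $A$ \ApA; statement (iii) itself still needs either the citation or an argument in the spirit of (i) --- for instance, with $u_{\alpha}$ the characteristic function of $\operatorname{supp}(e_{\alpha})$ (which lies in $A$ by the considerations above), the net $M_{\alpha}=(e-u_{\alpha})\otimes(e-u_{\alpha})+\sum_{i\in S_{\alpha}}\delta_{i}\otimes\delta_{i}$ has $\pi(M_{\alpha})=e$ and $\|a\cdot M_{\alpha}-M_{\alpha}\cdot a\|\leq 2\|a-au_{\alpha}\|\,(1+\|u_{\alpha}\|)$, so what must be fed in is exactly a coupled control of $\|u_{\alpha}\|$ against the rate of approximation, i.e.\ the quasi-boundedness hypothesis, not boundedness.
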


\begin{proof}
(i)  Let $(e_{\alpha})$ be the approximate identity, with $S_{\alpha} = {\rm supp} (e_{\alpha})$.  Define
\begin{eqnarray*}
\xi_{\alpha} &=& e\otimes e - e_{\alpha}\otimes e + \sum_{i\in S_{\alpha}} e_{\alpha}\delta_{i} \otimes \delta_{i}\,,\\
\eta_{\alpha} &=& e\otimes e - e\otimes e_{\alpha}+ \sum_{i\in S_{\alpha}} e_{\alpha}\delta_{i} \otimes \delta_{i}\,.\\
\end{eqnarray*}
Then $\pi(\xi_{\alpha}) = \pi(\eta_{\alpha})= e$, and
\[
a\cdot \xi_{\alpha} - \eta_{\alpha}\cdot a = (a-e_{\alpha}\otimes e +e\otimes (a-ae_{\alpha}) \to 0\,.
\]
(ii)  We have
\[
\|a\cdot \xi_{\alpha} -\eta_{\alpha}\cdot a \| = 2\|a-e_{\alpha}a\|\leq 2(1+K)\|a\|\,,
\]
if  $\|e_{\alpha} a\|\leq K\|a\|$, $a\in A$.
\oomit{
(i) Let $(e_{\alpha})$ be an approximate identity lying in $c_{00}(S)$. Let $D:A\to X$ be a continuous derivation into a Banach $A$-bimodule $X$.   Define $n_{\alpha} = |{\rm supp}(e_{\alpha})|$. Then $e_{\alpha}A\simeq \mathbb C^{n_{\alpha}}$, so $D|_{e_{\alpha}A}$ is inner,
 say implemented by $\xi_{\alpha}\in X$. Thus for $A$, 
\begin{eqnarray*}
D(a) &=& \lim{i}m_{\alpha} D(e_{\alpha}a) = \lim{i}m_{\alpha}\Big(e_{\alpha}a\cdot\xi_{\alpha} - \xi_{\alpha}\cdot e_{\alpha}a\Big)\\
& =& \lim{i}m_{\alpha} \Big(a\cdot(e_{\alpha}\cdot \xi_{\alpha}) - (\xi_{\alpha}\cdot e_{\alpha})\cdot a\Big)\,.
\end{eqnarray*}
The standard method of taking finite subsets gives the required net to show that $D$ is approximately semi-inner.
}

(iii) is \cite[Proposition 4.2]{GhaSt}.
\end{proof}


\begin{remark}
Note that in (iii), $2(1+K)$ gives an upper bound of $4$ for $K_{BASC}(J_{p})$, the comment before Theorem \ref{lower bound} gives lower bound $1$.
\end{remark}
\begin{corollary}\label{F2}
 For each $n\geq 2$, the algebra $A(\mathbb F_{n})$ is \BASC\ but not \ApA.
\end{corollary}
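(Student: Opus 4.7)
The plan is to slot $A(\mathbb{F}_n)$ into the setting of Theorem~\ref{BFAG}(ii). Since $\mathbb{F}_n$ is discrete, $A(\mathbb{F}_n)$ is a Banach function algebra on the discrete set $\mathbb{F}_n$, so the only thing to check for the positive half of the claim is the existence of a multiplier bounded approximate identity consisting of finitely supported functions.

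The crucial input is Haagerup's theorem that $\mathbb{F}_n$ has the completely contractive approximation property: there is a net $(\varphi_\alpha)$ of finitely supported functions in $A(\mathbb{F}_n)$ converging to $1$ pointwise with $\sup_\alpha \|\varphi_\alpha\|_{M_{cb}A(\mathbb{F}_n)} \leq 1$. Since the completely bounded multiplier norm dominates the ordinary multiplier norm on $A(\mathbb{F}_n)$, $(\varphi_\alpha)$ is multiplier bounded. A routine approximation argument, using the uniform multiplier bound together with density of the finitely supported functions in $A(\mathbb{F}_n)$, upgrades pointwise convergence $\varphi_\alpha \to 1$ to $\|\varphi_\alpha\cdot a - a\|_{A(\mathbb{F}_n)} \to 0$ for each $a \in A(\mathbb{F}_n)$; that is, $(\varphi_\alpha)$ is indeed an approximate identity of the required form. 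Theorem~\ref{BFAG}(ii) then gives that $A(\mathbb{F}_n)$ is \BASC.

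For the negative half, the route is to invoke the known fact that approximate amenability of $A(G)$ for a discrete group $G$ forces $G$ to be amenable (for instance via the results of \cite{GhaSt}); since $\mathbb{F}_n$ is the archetypal non-amenable discrete group for $n\geq 2$, $A(\mathbb{F}_n)$ cannot be \ApA.

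The main obstacle is precisely this final step: none of the intrinsic machinery developed in the paper rules out \ApAy\ of $A(\mathbb{F}_n)$ on its own. By Leptin's theorem $A(\mathbb{F}_n)$ has no bounded approximate identity, so Theorem~\ref{semi+baa=approx} does not apply to collapse \ApsAy\ onto \ApAy, and Theorem~\ref{parallel} only produces unbounded left and right approximate identities, which $A(\mathbb{F}_n)$ already possesses (the Haagerup net is such an approximate identity). Hence an external rigidity result on approximate amenability of Fourier algebras is genuinely needed to close the argument.
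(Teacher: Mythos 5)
Your positive half is exactly the paper's argument: Haagerup's theorem \cite[Theorem 2.1]{Haag} supplies a multiplier-bounded approximate identity for $A(\mathbb F_{n})$ consisting of finitely supported functions, and then Theorem \ref{BFAG}(ii) applies since $\mathbb F_{n}$ is discrete. The extra detail you supply (passing from pointwise convergence plus a uniform multiplier bound to $\|\varphi_\alpha a-a\|\to 0$ via density of the finitely supported elements) is standard and harmless. Your surrounding remarks are also accurate: by Leptin's theorem $A(\mathbb F_{n})$ has no bounded approximate identity, so Theorem \ref{semi+baa=approx} cannot be used, and Theorem \ref{parallel} gives nothing new here.

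The negative half, however, does not stand as written. The statement you invoke --- that approximate amenability of $A(G)$ for a discrete group $G$ forces $G$ to be amenable --- is not in \cite{GhaSt}; that paper is concerned with the positive direction (pseudo-amenability and approximate contractibility of $A(G)$ under amenability- and approximation-type hypotheses, e.g.\ the result quoted in Theorem \ref{BFAG}(iii)), and to my knowledge no such general rigidity theorem for arbitrary discrete (or locally compact) $G$ is available in the literature cited in this paper. What \emph{is} available, and what the paper actually uses, is the specific result of \cite[\S4]{CGZ}: $A(G)$ fails to be approximately amenable whenever $G$ contains $\mathbb F_{2}$ as a closed subgroup. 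The mechanism there is concrete rather than a general ``$\mathrm{ApA}\Rightarrow$ amenable'' principle: restriction to a suitable Leinert (free) subset $E\subset\mathbb F_{2}$ is a surjective algebra homomorphism from $A(\mathbb F_{2})$ onto $\ell^{2}(E)$ with pointwise multiplication, approximate amenability passes to quotients, and $\ell^{2}$ with pointwise product is not approximately amenable by \cite{DLZ}. Since you yourself identify this external input as the essential final step, the misattribution is a genuine gap in your write-up; it is repaired simply by replacing the appeal to \cite{GhaSt} with the citation of \cite[\S4]{CGZ} (or by reproducing the Leinert-set/$\ell^{2}$-quotient argument), which covers $A(\mathbb F_{n})$ for every $n\geq 2$.
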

\begin{proof}
By \cite[Theorem 2.1]{Haag}, $A(\mathbb F_{n})$ has a muliplier-bounded approximate identity of functions of compact support.  Theorem \ref{BFAG}(ii) gives bounded approximate semi-contractiblity. Failure of approximate amenability is \cite[\S4]{CGZ}.
 \end{proof}

\begin{remark}  While a \BAC\ algebra always has a bounded approximate identity \cite[Corollary 3.4]{CGZ}, this is not the case for bounded approximate semi-amenability.  For example, the Fourier algebra $A(G)$ of a locally compact group has a \BAI\ if and only if $G$ is amenable \cite{Lep}.
\end{remark}
\begin{corollary}\label{maxmin}
  The algebra $\ell^{1}(\bN_{\wedge},\omega)$ is always approximately semi-amenable, and is \ApA\ if and only if $\liminf_{n\to\infty} \omega_{n} <\infty$.  The algebra $\ell^{1}(\bN_{\vee},\omega)$ is approximately (semi)-amenable if and only if $\liminf_{n\to\infty} \omega_{n} <\infty$.
 \end{corollary}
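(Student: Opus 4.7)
The plan is to treat the min- and max-cases separately, applying Theorem \ref{BFAG}(i) in the $\wedge$-case and Theorem \ref{semi+baa=approx} in the $\vee$-case, where unitality forces \ApsA\ and \ApA\ to coincide.

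For $\ell^1(\bN_\wedge,\omega)$: I would show that $(\delta_{n_j})$ is an approximate identity of finite-support elements, where $n_j$ is the least $k\geq j$ realising $\min_{i\geq j}\omega_i$.  Since $\omega_{n_j}\leq \omega_k$ for all $k>n_j$, one gets
\[
\|\delta_{n_j}f - f\|_\omega = \sum_{k>n_j}|a_k|\omega_k + \omega_{n_j}\Big|\sum_{k>n_j}a_k\Big| \leq 2\sum_{k>n_j}|a_k|\omega_k \longrightarrow 0 \qquad \Big(f=\sum_k a_k\delta_k\in A\Big).
\]
Then Theorem \ref{BFAG}(i) combined with Theorem \ref{equiv} gives \ApsA\ for every weight $\omega$. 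If in addition $\liminf_n \omega_n<\infty$, a bounded subsequence of $(\delta_{n_j})$ provides a central BAI (by commutativity), and Theorem \ref{semi+baa=approx} then upgrades \ApsA\ to \ApA. Conversely, if $\liminf_n \omega_n=\infty$, the characters $\phi_n(\delta_k)=\mathbb{1}_{\{k\geq n\}}$ satisfy $\|\phi_n\|\leq 1/\!\inf_{k\geq n}\omega_k\to 0$, so no BAI can exist: any such $(e_\alpha)$ would force $\phi_n(e_\alpha)\to 1$, contradicting $|\phi_n(e_\alpha)|\leq\|e_\alpha\|\,\|\phi_n\|$. Invoking the standard fact that a commutative \ApA\ Banach algebra admits a BAI, \ApA\ must then fail.

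For $\ell^1(\bN_\vee,\omega)$: since $\max(1,k)=k$ for $k\geq 1$, the element $\delta_1$ is an identity and the algebra is unital. By Theorem \ref{semi+baa=approx}, this gives \ApsA\ $\iff$ \ApA. The equivalence with $\liminf_n \omega_n<\infty$ then follows from the standard analysis of weighted semilattice algebras: if $\liminf_n \omega_n<\infty$ one constructs a bounded approximate diagonal along a subsequence where the weight is bounded, using the orthogonal idempotents $h_n=\delta_n-\delta_{n+1}$; if instead $\omega_n\to\infty$, the downward-closed characters $\phi_n(\delta_k)=\mathbb{1}_{\{k\leq n\}}$ together with the same BAI-obstruction argument rule out \ApA.

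The principal technical step, and main obstacle, is the converse direction in each case: establishing that $\liminf_n \omega_n=\infty$ genuinely obstructs \ApA\ (not merely \BApA). This rests on the nontrivial fact --- to be invoked from the approximate-amenability literature --- that a commutative approximately amenable Banach algebra must possess a bounded approximate identity; a self-contained derivation-based proof would otherwise require building a derivation into a bimodule assembled from the degenerating characters $\phi_n$.
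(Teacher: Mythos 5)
The positive halves of your argument are essentially sound, and in fact make explicit what the paper leaves to a citation: your $(\delta_{n_j})$ is (after the small repair that $\min_{i\geq j}\omega_i$ need not be attained, so one should take near-minimisers, say $\omega_{n_j}\leq\inf_{i\geq j}\omega_i+1/j$, which still gives $\|\delta_{n_j}f-f\|\to0$ and boundedness when $\liminf_n\omega_n<\infty$) a finite-support approximate identity in the Gelfand picture, which is exactly the identification of $\ell^1(\bN_\wedge,\omega)$ with the Feinstein algebra that the paper quotes from \cite[3.2]{DL}; Theorem \ref{BFAG}(i) plus Theorem \ref{equiv} then gives \ApsAy\ for every weight, and the bounded central approximate identity plus Theorem \ref{semi+baa=approx} gives \ApAy\ when $\liminf_n\omega_n<\infty$. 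The genuine gap is in both converse directions. For $\bN_\wedge$ you rest the case $\liminf_n\omega_n=\infty$ on ``the standard fact that a commutative \ApA\ Banach algebra admits a \BAI''. There is no such fact: approximate amenability yields only (in general unbounded) one-sided approximate identities, and Ghahramani--Read \cite{GhaR1} show that \ApAy\ does \emph{not} imply the existence of a \BAI; no commutative version of the implication is available in the literature, and the paper does not use one --- it invokes \cite[Theorem 3.10]{DL}, where failure of \ApAy\ for $\liminf_n\omega_n=\infty$ is proved by an argument specific to these weighted semilattice algebras. Your computation that $\|\phi_n\|\to0$ forbids a \BAI\ is correct, but it does not contradict \ApAy: the algebra still has the unbounded approximate identity $(\delta_{n_j})$ you constructed.

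The $\bN_\vee$ half is worse off. Since $\delta_1$ is an identity, $\ell^1(\bN_\vee,\omega)$ trivially has a \BAI, and its characters satisfy $\|\phi_n\|\geq|\phi_n(\delta_1)|/\|\delta_1\|=1/\omega_1>0$, so ``the same BAI-obstruction argument'' is vacuous there and cannot rule out \ApAy\ for any weight. The paper's route is different: the maximal ideal at infinity of $\ell^1(\bN_\vee,\omega)$ is isomorphic to $\ell^1(\bN_\wedge,\sigma)$ with $\sigma_j=\omega_{j+1}$, and the algebra is its unitization, so \ApAy\ transfers to the $\wedge$-algebra (\cite[3.11]{DL}) and \cite[Theorem 3.10]{DL} finishes. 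Also, in the forward direction for $\bN_\vee$ you propose a \emph{bounded} approximate diagonal along a subsequence of bounded weight; such an object would make the algebra amenable, which is false for every weight (the norm-decreasing dense-range embedding into $\ell^1(\bN_\vee)$ would transfer amenability, and $\ell^1$ of an infinite semilattice is not amenable, by Duncan--Namioka), so any approximating net here is necessarily unbounded --- this is precisely the computation of \cite[Example 4.6]{GhaLoyZh} that the paper cites. As it stands, your proposal establishes ``always \ApsA'' and ``$\liminf_n\omega_n<\infty\Rightarrow$ \ApA'' for $\bN_\wedge$, and the unital reduction \ApsA\ $\Longleftrightarrow$ \ApA\ for $\bN_\vee$, but not the two ``only if'' statements nor the $\bN_\vee$ ``if''.
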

\begin{proof}
   $\ell^{1}(\bN_{\wedge},\omega)$   is isomorphic to the Feinstein algebra ${\mathcal A}_{\omega}$, \cite[3.2]{DL}.

 By Proposition \ref{cbaistrong},  being \ApA\ and \ApsA\ are the same for $\ell^{1}(\bN_{\vee},\omega)$. Supposing, then, that   $\ell^{1}(\bN_{\vee},\omega)$ is \ApA,  \cite[3.11]{DL}, gives  $\ell^{1}(\bN_{\wedge},\omega)$ is \ApA, and so we have 
 {$\liminf_{n\to\infty} \omega_{n} <\infty$}  by \cite[Theorem 3.10]{DL}. Conversely, if this limit is finite, then the calculation (for a suitable subsequence) at the end of \cite[Example 4.6]{GhaLoyZh} shows that $\ell^{1}(\bN_{\vee},\omega)$ is \ApA.
 
     \end{proof}
\begin{remark}
It is shown in \cite[3.2]{DL} that  the maximal ideal $M$ at $\infty$ in  $\ell^{1}(\bN_{\vee},\omega)$ is isomorphic to 
 $\ell^{1}(\bN_{\wedge},\sigma)$ where $\sigma_{j} = \omega_{j+1}$.  Thus $M$ is always approximately semi-amenable, and \ApA\  if and only if  $\ell^{1}(\bN_{\wedge},\omega)$ is, if and only  if $\liminf_{n\to\infty} \omega_{n} <\infty$.
\end{remark}

  \section{Sums and products of \ApsA\ algebras}
  
 It is known that if $A$ and $B$ are \ApA\ then in general $A\oplus B$ can fail to be \ApA, even with $B = A^{\rm op}$, \cite{GhaR1}. The situation for $A\oplus A$ is open for general $A$.  
 
The situation for approximate semi-amenability is rather better.
  
  \begin{theorem}\label{directsums} 
(i)  Suppose that $A$ and $B$ are \ApsA\ Banach algebras. Then $A\oplus B$ is  \ApsA\ .\\
 (ii) Suppose that $A$ and $B$ are boundedly approximately semi-amenable (-contractible) Banach algebras, and each has a central multiplier bounded approximate identity. Then  $A\oplus B$ has the same properties.
 \end{theorem}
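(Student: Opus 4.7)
The plan is to apply the tensor-product characterization of Theorem~\ref{semicont} (equivalently Theorem~\ref{semiamen}) and build the net for $C=A\oplus B$ by amalgamation.  Starting from nets $(\xi^{A}_{i}),(\eta^{A}_{i})\subset A^{\#}\tensor A^{\#}$ satisfying clauses 1--2 of Theorem~\ref{semicont} for $A$, and $(\xi^{B}_{j}),(\eta^{B}_{j})\subset B^{\#}\tensor B^{\#}$ for $B$, I would push both through the natural unital embeddings $A^{\#}\hookrightarrow C^{\#}$ and $B^{\#}\hookrightarrow C^{\#}$ (which identify $e_{A}$ and $e_{B}$ with the common adjoined identity $e$ of $C=A\oplus B$), and hence obtain corresponding embeddings $A^{\#}\tensor A^{\#},\,B^{\#}\tensor B^{\#}\hookrightarrow C^{\#}\tensor C^{\#}$ whose intersection is exactly the line $\mathbb{C}\,e\otimes e$.

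Set
\[
\Xi_{(i,j)} = \xi^{A}_{i}+\xi^{B}_{j}-e\otimes e, \qquad
H_{(i,j)} = \eta^{A}_{i}+\eta^{B}_{j}-e\otimes e,
\]
so that $\pi_{C}(\Xi_{(i,j)})=\pi_{A}(\xi^{A}_{i})+\pi_{B}(\xi^{B}_{j})-e=e+e-e=e$, and similarly for $H_{(i,j)}$; this gives clause~2 of Theorem~\ref{semicont} for $C$.  For $c=a+b\in C$ (with $a\in A$, $b\in B$) I would expand
\[
c\cdot\Xi_{(i,j)}-H_{(i,j)}\cdot c
= \bigl(a\cdot\xi^{A}_{i}-\eta^{A}_{i}\cdot a\bigr)+\bigl(b\cdot\xi^{B}_{j}-\eta^{B}_{j}\cdot b\bigr)+R_{(i,j)}(a,b),
\]
where $R_{(i,j)}(a,b)$ collects the ``cross'' contributions $a\cdot\xi^{B}_{j}$, $\eta^{B}_{j}\cdot a$, $b\cdot\xi^{A}_{i}$, $\eta^{A}_{i}\cdot b$, together with the correction terms $-c\cdot(e\otimes e)+(e\otimes e)\cdot c$.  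The first two bracketed terms tend to $0$ by the \ApsA hypotheses on $A$ and~$B$, and the crucial relation $AB=BA=0$ in $A\oplus B$ kills all products of elements of $A$ with elements of $B$ inside the tensors, so that $R_{(i,j)}(a,b)$ collapses to terms involving only the $\mathbb{C}\,e$-components of the $\xi$'s and $\eta$'s; these are precisely absorbed by the $-e\otimes e$ correction.

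The main technical step, and the one needing most care, is the vanishing of $R_{(i,j)}(a,b)$: after the $AB=0$ simplifications the residue still contains pieces living in $A\tensor B$ and $B\tensor A$ (which lie outside both $A^{\#}\tensor A^{\#}$ and $B^{\#}\tensor B^{\#}$). To handle these, I would invoke the freedom provided by Remark~\ref{eqapprox} to pre-normalize the nets, and, using the approximate identities guaranteed by Theorem~\ref{parallel}, modify $(\xi^{A}_{i},\eta^{A}_{i})$ and $(\xi^{B}_{j},\eta^{B}_{j})$ so that the "$e\otimes A$" and "$A\otimes e$" (respectively "$e\otimes B$" and "$B\otimes e$") components are driven to zero along the nets. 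With that normalization in place the cross contributions $a\otimes(\cdot)\to 0$ and $(\cdot)\otimes a\to 0$ in $C^{\#}\tensor C^{\#}$, and an iterated-limit/diagonal argument over the product index set $(i,j)$ delivers $c\cdot\Xi_{(i,j)}-H_{(i,j)}\cdot c\to 0$ for every $c\in C$, completing (i) via Theorem~\ref{semicont}.

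For (ii), the construction is unchanged, and the triangle inequality gives
\[
\bigl\|\,\mathrm{ad}_{\Xi_{(i,j)},H_{(i,j)}}(c)\bigr\|
\leq \bigl\|\mathrm{ad}_{\xi^{A}_{i},\eta^{A}_{i}}(a)\bigr\|+\bigl\|\mathrm{ad}_{\xi^{B}_{j},\eta^{B}_{j}}(b)\bigr\|+\|R_{(i,j)}(a,b)\|.
\]
The first two are bounded by $K_{BASC}(A)\|a\|+K_{BASC}(B)\|b\|\le (K_{BASC}(A)+K_{BASC}(B))\|c\|$. The central multiplier-bounded approximate identities in $A$ and $B$ (with bounds $m_{A}$, $m_{B}$ say) are exactly what is needed to carry out the normalization of the previous paragraph \emph{uniformly} in $i$ and $j$, giving a norm bound of the form $2(m_{A}+m_{B})\|c\|$ on the residue term $R_{(i,j)}(a,b)$. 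Putting these together yields a uniform bound $\|\mathrm{ad}_{\Xi_{(i,j)},H_{(i,j)}}\|\le K_{BASC}(A)+K_{BASC}(B)+2(m_{A}+m_{B})$, verifying clauses 1--3 of Theorem~\ref{bddsemicont}; the \BApsA\ case is identical in the bidual.
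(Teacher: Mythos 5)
Your amalgamation strategy via Theorem \ref{semicont} is workable in outline, but the step you yourself flag as the delicate one --- driving the off-diagonal components to zero --- does not go through as described, and this is a genuine gap in part (i). Decompose $\xi^{A}_{i}=z_{i}+p_{i}\otimes e+e\otimes q_{i}+e\otimes e$ and $\eta^{A}_{i}=w_{i}+r_{i}\otimes e+e\otimes s_{i}+e\otimes e$ along $A^{\#}\tensor A^{\#}=(A\tensor A)\oplus(A\otimes e)\oplus(e\otimes A)\oplus\bC(e\otimes e)$. Clause 1 for $A$ forces $ap_{i}\to -a$ and $s_{i}a\to -a$, so the components $p_{i}$ and $s_{i}$ encode approximate identities and can never be ``driven to zero''; fortunately they cause no cross terms. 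The residue for $c=a\oplus b$ is exactly $R=b\otimes q_{i}-r_{i}\otimes b+a\otimes q'_{j}-r'_{j}\otimes a$, whose four summands lie in different direct summands of $C^{\#}\tensor C^{\#}$, so $R\to 0$ forces $\|q_{i}\|,\|r_{i}\|\to 0$ (and likewise for $B$). Your proposed fix is to replace, say, $e\otimes q_{i}$ by $u\otimes q_{i}$ with $u$ taken from the approximate identities supplied by Theorem \ref{parallel}; but then clause 1 needs $au\approx a$ on the finite set (a right-identity condition), while clause 2 needs $\pi(\tilde{\xi}^{A}_{i})\to e$, i.e. $uq_{i}\approx q_{i}$ (a left-identity condition on the fixed element $q_{i}$). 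Theorem \ref{parallel} only provides separate, possibly unbounded, one-sided approximate identities, and the usual $u=f+g-fg$ trick gives errors $\|a-af\|(1+\|g\|)$ and $(1+\|f\|)\|q_{i}-gq_{i}\|$, whose norm dependencies are circular when neither net is bounded. Remark \ref{eqapprox} does not help here: it concerns replacing $\pi(\xi_{i})\to e$ by equality, not controlling these components. So as stated the normalization is unjustified (in part (ii) centrality and multiplier-boundedness do rescue it, since a central approximate identity is two-sided, but part (i) has no such hypothesis).

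The repair is to correct inside $C^{\#}\tensor C^{\#}$ rather than inside $A^{\#}\tensor A^{\#}$: subtract from $\Xi_{(i,j)}$ terms such as $h\otimes q_{i}$ and $f\otimes q'_{j}$, with $h$ a right approximate identity element of $B$ and $f$ one of $A$, and from $H_{(i,j)}$ terms $r_{i}\otimes k$, $r'_{j}\otimes k'$ with $k,k'$ left approximate identity elements of $B$, $A$. Because $AB=BA=0$, these correctors are annihilated by $\pi_{C}$, so clause 2 is untouched, while the cross terms become $(b-bh)\otimes q_{i}$, $r_{i}\otimes(b-kb)$, etc., which vanish in the iterated limit (first fix $i,j$, then choose $h,f,k,k'$ for the given finite sets). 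This is essentially the mechanism of the paper's own proof, carried out there at the module level: the implementing elements $x,y,w,z$ for $D|_{A}$ and $D|_{B}$ are pre-multiplied by $(f_{\beta}\oplus 0)$, $(e_{\alpha}\oplus 0)$, $(0\oplus h_{\delta})$, $(0\oplus g_{\gamma})$, so that the action of $a_{i}\oplus b_{i}$ automatically kills the wrong summand and no tensor decomposition is needed. With that correction your approach is sound and gives an alternative, more ``diagonal-like'' proof; without it, the key convergence $c\cdot\Xi_{(i,j)}-H_{(i,j)}\cdot c\to 0$ is not established.
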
 
 
\begin{proof}  (i) By Theorem \ref{parallel}, $A$ has a left $(e_{\alpha})$ and a right $(f_{\beta})$ approximate identity, as does $B$, $(g_{\gamma})$ and $(h_{\delta})$. 

By Theorem \ref{equiv}  \ApsA\ is the same as \ApsC. 
Let  $X$ be a  Banach $(A\oplus B)$-bimodule, $D:A\oplus B \to X$ be a continuous derivation.  Take $\varepsilon > 0$ and $a_{1}, 
\ldots a_{k}\in A$, $b_{1}, \ldots b_{k}\in B$. Considering $D|_{A}$ and $D|_{B}$,  there are $x, y, w, z\in X$ such that
\[
\|D(a_{i}\oplus 0) - (a_{i}\oplus 0)\cdot x + y\cdot (a_{i}\oplus 0)\| < \varepsilon/2 \quad (i=1, \ldots, k)\,,
\]
and also $w, z\in X$ such that
\[
\|D(0\oplus b_{i}) - (0\oplus b_{i})\cdot w + z\cdot (0\oplus b_{i})\| < \varepsilon/2 \quad (i=1, \ldots, k)\,.
\]

Then there is $\alpha, \beta, \gamma, \delta$ such that
\begin{equation}\label{a}
\|D(a_{i}\oplus 0) - (a_{i}f_{\beta}\oplus 0)\cdot x + y\cdot (e_{\alpha}a_{i}\oplus 0)\| < \varepsilon/2 \quad (i=1, \ldots, k)\,,
\end{equation}
and 
\begin{equation}\label{b}
\|D(0\oplus b_{i}) - (0\oplus b_{i}h_{\delta})\cdot w + z\cdot (0\oplus g_{\gamma}b_{i})\| < \varepsilon/2 \quad (i=1, \ldots, k)\,.
\end{equation}

Now \eqref{a} can be rewritten as
\begin{equation}\label{firstsummand}
\|D(a_{i}\oplus 0) - (a_{i}\oplus b_{i})\cdot((f_{\beta}\oplus 0)\cdot x) + (y\cdot (e_{\alpha}\oplus 0))\cdot(a_{i}\oplus b_{i})\| < \varepsilon/2 \quad (i=1, \ldots, k)\,,
\end{equation}
and \eqref{b} as
\begin{equation}\label{secondsummand}
\|D(0\oplus b_{i}) - ((a_{i}\oplus b_{i}))\cdot(0\oplus h_{\delta})\cdot w+ (z\cdot (0\oplus g_{\gamma}))\cdot(a_{i}\oplus b_{i})\| < \varepsilon/2 \quad (i=1, \ldots, k)\,.
\end{equation}
It follows that
\begin{eqnarray*}
\|D(a_{i}\oplus b_{i})& - &(a_{i}\oplus b_{i})\cdot \overbrace{\Big( f_{\beta}\oplus 0)\cdot x +(0\oplus h_{\delta})\cdot w\Big)}^{\xi_{\lambda}}\\
&&\kern 2cm +\ \overbrace{\Big(y\cdot (e_{\alpha}\oplus 0) + z\cdot (0\oplus g_{\gamma}\Big)}^{\eta_{\lambda}}\cdot(a_{i}\oplus b_{i})\| < \varepsilon
\end{eqnarray*}
for $i=1, \ldots, k$.   Taking the directed set $\Lambda = \cF(A\times B)\times (0, \infty)$ with partial order $(F, \varepsilon)\leq (G, \rho)$ defined as $F\subset G$ and $\rho \leq \varepsilon$, it is immediate that there are nets $(x_{\lambda})_{\Lambda}, (\eta_{\lambda})\subset X$ with
\[
D(a\oplus b) = \lim_{\lambda}((a\oplus b)\cdot\xi_{\lambda} - \eta_{\lambda}\cdot (a\oplus b))\qquad  a \in A, b\in B\,.
\]

(ii)  Although \BApSAy\ and \BApSCy\ are distinct notions (Proposition \ref{difference}), the same proof holds in both cases.
Suppose that $X$ is an $(A\oplus B)$-Banach bimodule, and $D:A\oplus B\to X$ is a continuous derivation.  Clearly $X$ is naturally both an $A$- and a $B$- Banach bimodule, and $D|_{A\oplus 0}\to X$, $D|_{0 \oplus B}\to X$ are continuous derivations.  Thus there are nets $(x_{i}), (y_{i}), (u_{j}), (v_{j}) \subset X$ and a constant $K > 0$ such that
\begin{eqnarray*}
D(a\oplus 0) &=&\lim_{i}(a\cdot x_{i} - y_{i}\cdot a)\quad(a\in A,,\\
D(0\oplus b) &=&\lim_{j}(b\cdot u_{j} - v_{j}\cdot b)\quad(b\in B)\,,\\
&&\kern -1.5cm\|a\cdot x_{i} - y_{i}\cdot a\|\leq K\|a\|\,,\quad (a\in A)\,,\\
&&\kern -1.5cm\|b\cdot u_{j} - v_{j}\cdot b\|\leq K\|b\|\,,\quad(b\in B)\,.
\end{eqnarray*}

Let $(e_{\alpha})$ (resp. $(f_{\beta})$) be multiplier bounded central approximate identities in $A$ (resp. $B$), with multiplier bound $M$.  Take $\varepsilon > 0$ and $a_{1}, \ldots a_{k}\in A$, $b_{1}, \ldots b_{k}\in B$. Then as in (i), there are $\alpha,  \beta, \gamma, \delta$ and $x, y, w, z\in X$ such that for $i=1, \ldots, k$, equations \eqref{firstsummand} and \eqref{secondsummand} hold,
together with, using centrality, 
\[
\|(a_{i}\oplus b_{i})(e_{\alpha}\cdot x) - ( y\cdot e_{\alpha})(a_{i}\oplus b_{i})\| = \|(a_{i}e_{\alpha})\cdot x- y\cdot(a_{i}e_{\alpha})\|\leq KM\|a_{i}\|\,,
\]
and
\[
\|(a_{i}\oplus b_{i})(f_{\beta}\cdot w) - ( z\cdot f_{\beta})(a_{i}\oplus b_{i})\| = \|(b_{i}f_{\beta})\cdot w - z\cdot(b_{i}f_{\beta})\|\leq KM\|b_{i}\|\,.
\]
Thus
\[
\|(a_{i}\oplus b_{i})(e_{\alpha} \cdot x+ f_{\beta} \cdot w)- ( y\cdot e_{\alpha} + z\cdot f_{\beta})(a_{i}\oplus b_{i})\|\leq KM\|a_{i}\oplus b_{i}\|\,.
\]

The result follows as in (i).
\end{proof}

The converse holds by Theorem \ref{quotients}.

 \begin{corollary}
 The class of \ApsA\ Banach algebras is closed under finite direct sums.  In particular, the direct sum of two \ApA\ algebras is \ApsA, though it may fail to be \ApA. \qed
 \end{corollary}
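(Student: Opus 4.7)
The plan is to deduce the corollary directly from Theorem \ref{directsums}(i) by a routine induction on the number of summands. For the base case $n=2$, the statement is exactly Theorem \ref{directsums}(i). For the inductive step, given \ApsA\ Banach algebras $A_1, \ldots, A_n$, use the natural isometric isomorphism
\[
A_1 \oplus \cdots \oplus A_n \;\cong\; (A_1 \oplus \cdots \oplus A_{n-1}) \oplus A_n,
\]
apply the inductive hypothesis to conclude that $A_1 \oplus \cdots \oplus A_{n-1}$ is \ApsA, and then apply Theorem \ref{directsums}(i) once more to the two summands on the right. No estimates or constructions are needed beyond the two-summand case.

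For the first half of the ``in particular'' assertion, observe that every \ApA\ Banach algebra is \ApsA: if $D = \lim_i {\rm ad}_{\xi_i}$ in the strong operator topology, then setting $\eta_i := \xi_i$ yields $D(a) = \lim_i (a \cdot \xi_i - \eta_i \cdot a)$, so $D$ is approximately semi-inner in the sense of the definition preceding Theorem \ref{semicont}. Hence if $A$ and $B$ are \ApA, they are \ApsA, and the direct sum claim just proved shows $A \oplus B$ is \ApsA.

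For the final clause, that $A \oplus B$ may fail to be \ApA\ even when $A$ and $B$ are, we simply invoke the example from \cite{GhaR1} already cited at the start of this section (indeed, in that reference $B$ may be taken to be $A^{\rm op}$). No further argument is required.

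I do not expect any obstacle here: the result is essentially a packaging of Theorem \ref{directsums}(i) together with the trivial implication \ApA\ $\Rightarrow$ \ApsA\ and a citation for the non-preservation of \ApAy\ under direct sums.
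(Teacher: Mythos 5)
Your proposal is correct and is essentially the argument the paper intends: the corollary is stated as an immediate consequence of Theorem \ref{directsums}(i) (iterated for finitely many summands), combined with the trivial observation that approximate amenability implies approximate semi-amenability (take $\eta_i = \xi_i$) and the citation of \cite{GhaR1} for the failure of approximate amenability of $A \oplus A^{\rm op}$. Nothing further is needed.
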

 
The arguments of the next two results are very similar, but there are subtle differences which raise further questions.  For a  set $S$, denote by $\cF$ the collection of the finite subsets of $S$, directed by set inclusion.  For $F\in \cF$, set $P_{F}$ to be the characteristic function of $F$.    Thus $(P_F)$ is  the standard  multiplier-bounded approximate identity of $\ell^1(S, \bC)$. 
 Note that the analogue of Theorem \ref{sums} for \ApA\ algebras is true in the unital case (proof follows), but is false in general (even for finite sums \cite[Theorem 4.1]{GhaR1}).
 
 \begin{theorem}\label{appsums}
Suppose that $(A_{\lambda})_{\lambda\in \Lambda}$ is a family of unital \ApA\ algebras.  Then  $c_{0}(A_{\lambda})$ is  approximately amenable. $\ell^{p}(A_{\lambda})$ is not \ApA\ if $\Lambda$ is infinite,  but is always approximately semi-amenable.
\end{theorem}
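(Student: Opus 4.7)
The plan is to establish approximate semi-amenability of both $c_0(A_\lambda)$ and $\ell^p(A_\lambda)$ by a common explicit construction satisfying the hypotheses of Theorem \ref{semicont}, and then to use the central projections $P_F = \sum_{\lambda\in F}1_\lambda$ (with $F\in\cF$, the directed set of finite subsets of $\Lambda$) to distinguish the two cases. In the $c_0$-case, $(P_F)$ is a central \BAI\ of norm $1$, so Theorem \ref{semi+baa=approx} upgrades \ApsA\ to \ApA. In the $\ell^p$-case with $\Lambda$ infinite, $(P_F)$ is unbounded in $\ell^p$-norm, and more generally no \BAI\ can exist, which will preclude \ApA.

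For the common \ApsA\ construction, each unital \ApA\ algebra $A_\lambda$ supplies (after the standard invertibility modification of $\pi$) a net $(M^\lambda_{j_\lambda})$ in $A_\lambda\tensor A_\lambda$ with
\[
a\cdot M^\lambda_{j_\lambda} - M^\lambda_{j_\lambda}\cdot a \to 0 \quad (a\in A_\lambda), \qquad \pi(M^\lambda_{j_\lambda}) = 1_\lambda.
\]
Letting $B$ denote either $c_0(A_\lambda)$ or $\ell^p(A_\lambda)$, writing $e$ for the adjoined identity of $B^\#$ and $\iota_\lambda$ for the $\lambda$-th coordinate embedding, I set
\[
\xi_{F,(j_\lambda)} = e\otimes e + \sum_{\lambda\in F}\big(M^\lambda_{j_\lambda} - \iota_\lambda(1_\lambda)\otimes e\big),
\]
\[
\eta_{F,(j_\lambda)} = e\otimes e + \sum_{\lambda\in F}\big(M^\lambda_{j_\lambda} - e\otimes \iota_\lambda(1_\lambda)\big).
\]
A direct calculation yields $\pi(\xi_{F,(j_\lambda)}) = \pi(\eta_{F,(j_\lambda)}) = e$ and
\[
a\cdot\xi_{F,(j_\lambda)} - \eta_{F,(j_\lambda)}\cdot a = (a-aP_F)\otimes e - e\otimes(a-aP_F) + \sum_{\lambda\in F}\big(a_\lambda\cdot M^\lambda_{j_\lambda} - M^\lambda_{j_\lambda}\cdot a_\lambda\big)
\]
(the final sum interpreted in $B^\#\tensor B^\#$ via the contractive coordinate embeddings). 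The first two terms tend to zero in norm as $F\uparrow\Lambda$ since $\|a-aP_F\|\to 0$, which is the ``tail-vanishing'' property defining $c_0$ and $\ell^p$. After fixing $F$, each $\lambda$-th summand of the finite sum can be driven below $\varepsilon/|F|$ by an appropriate choice of $j_\lambda$. A standard directed-set argument on $(F, (j_\lambda), \text{finite }S\subset B, \varepsilon)$ then produces the nets required by Theorem \ref{semicont}, so $B$ is \ApsC, hence \ApsA\ by Theorem \ref{equiv}.

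For the failure of \ApA\ of $\ell^p(A_\lambda)$ when $\Lambda$ is infinite, I invoke the standard fact that \ApA\ forces the existence of a \BAI\ (\cite[Proposition 2.2]{GhaLoy}). Any approximate identity $(u_\nu)$ of $\ell^p(A_\lambda)$ must satisfy $(u_\nu)_\lambda\to 1_\lambda$ in each coordinate (because $\iota_\lambda(a)\cdot u_\nu \to \iota_\lambda(a)$ forces $a(u_\nu)_\lambda\to a$ for every $a\in A_\lambda$, and then taking $a = 1_\lambda$ gives norm convergence $(u_\nu)_\lambda\to 1_\lambda$). For any finite $F\subset \Lambda$ and $\varepsilon>0$, one then has $\|u_\nu\|_{\ell^p}^p \geq |F|(1-\varepsilon)^p$ eventually, so $\sup_\nu \|u_\nu\|_{\ell^p}=\infty$, contradicting boundedness. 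The main technical subtlety throughout is the projective-tensor-product norm estimate on $\sum_{\lambda\in F}(a_\lambda\cdot M^\lambda_{j_\lambda} - M^\lambda_{j_\lambda}\cdot a_\lambda)$; this is controlled by the triangle inequality and the contractivity of each embedding $\iota_\lambda\tensor\iota_\lambda \colon A_\lambda\tensor A_\lambda \to B^\#\tensor B^\#$, giving a uniform bound in terms of $|F|$ and internal commutator norms, so picking $F$ before the $j_\lambda$'s controls the whole expression.
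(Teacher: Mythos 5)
Your argument that $\ell^{p}(A_{\lambda})$ fails to be \ApA\ when $\Lambda$ is infinite rests on the claim that approximate amenability forces the existence of a \BAI, and that claim is false. What \cite{GhaLoy} yields (compare Theorem \ref{parallel} above) is only the existence of left and right approximate identities, with no norm control; indeed Ghahramani and Read \cite{GhaR1} constructed a Banach algebra which is even boundedly \ApA\ yet has no bounded approximate identity --- precisely the example invoked in Proposition \ref{difference} of this paper. So your (correct) computation that every approximate identity of $\ell^{p}(A_{\lambda})$ is norm-unbounded does not by itself exclude approximate amenability. The paper closes this point by a different device: $\ell^{p}(A_{\lambda})$ contains an isometric copy of $\ell^{p}$ (spanned by the coordinate identities), hence has a SUM configuration, and therefore fails to be \ApA\ by \cite[Theorem 2.5]{CG}. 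Some argument of that kind, robust under unbounded approximate identities, is needed to repair this part.

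The two positive assertions you handle correctly, and by a route genuinely different from the paper's. You construct explicit nets $\xi_{F,(j_{\lambda})},\eta_{F,(j_{\lambda})}$ verifying Theorem \ref{semicont} simultaneously for $c_{0}(A_{\lambda})$ and $\ell^{p}(A_{\lambda})$, and then upgrade to \ApAy\ in the $c_{0}$ case via the central \BAI\ $(P_{F})$ together with Theorem \ref{semi+baa=approx} (i.e.\ Proposition \ref{cbaistrong}); the paper instead proves approximate amenability of $c_{0}(A_{\lambda})$ directly, decomposing $D^{F}(a)=D(E^{F}a)$ by \cite[Lemma 2.4]{GhaLoy} with the correction terms $\eta^{F}$ uniformly bounded over $F$, and deduces the \ApsAy\ of $\ell^{p}(A_{\lambda})$ from the general summation result, Theorem \ref{sums}. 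Your construction is a legitimate and rather more self-contained alternative for these claims. Two small caveats: both your upgrade step and the paper's argument tacitly need $\sup_{\lambda}\|1_{\lambda}\|<\infty$ so that $(P_{F})$ (respectively $(E^{F})$) is in fact bounded; and the nets $M^{\lambda}_{j}$ with $\pi(M^{\lambda}_{j})=1_{\lambda}$ should be obtained by compressing a net in $A_{\lambda}^{\#}\tensor A_{\lambda}^{\#}$ with $\pi$-image $e$ by $1_{\lambda}$ on both sides, rather than by multiplying by $\pi(M)^{-1}$: since the nets are in general unbounded, the latter ``invertibility modification'' does not obviously preserve asymptotic centrality.
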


\begin{proof}  This is a variant of \cite[Example 6.1]{GhaLoy} using \cite[Proposition 6.1]{GhaLoyZh}.  By the latter, finite sums of the $A_{\lambda}$'s are \ApA.  Write $A = c_{0}(A_{\lambda})$.  Let $D:A \to X^{*}$ be a derivation.  For a finite set $F\subset \Lambda$, define $E_{F}$ by 
\[
E^{F}_{\lambda} = 
\begin{cases} 
e_{\lambda} & \lambda \in F\\
0 & \lambda \not\in F\,,\\
\end{cases}
\]
where $e_{\lambda}$ denotes the identity of $A_{\lambda}$.  Set $B^{F} =  E^{F}A$,  and define
$D^{F} (a) = D(E^{F}(a))$. Then by \cite[Lemma 2.4]{GhaLoy}, 
$D^{F} = {\rm ad}_{\eta^{F}} + {\rm st}-\lim_{i} {\rm ad} (\xi^{F}_{i})$ where $\eta^{F}$ is bounded over $F$.  Here $\xi_{i}^{F}, \eta^{F} \in E^{F}X^{*}E^{F}$, and
\[
\|{\rm ad}_{\eta^{F}} (E^{F}a - a)\|\to 0 \qquad (a\in A)
\]
by boundedness of $(\eta^{F})$, and $(E^{F})$ being a \BAI\ for $A$.
Thus
\begin{eqnarray*}
D(a) &=& \lim_{F}(D^{F}(a)) = \lim_{F}(\lim_{i}(aE^{F}\cdot \xi_{i}^{F} - \xi_{i}^{F}\cdot E^{F}a) + {\rm ad}_{\eta^{F}}a))\\
&=&  \lim_{F}\lim_{i}\Big(a\cdot \xi_{i}^{F} - \xi_{i}^{F}\cdot a +a\cdot\eta^{F} - \eta^{F}\cdot a)\Big)\,.\\
\end{eqnarray*}

Thus given $\varepsilon > 0$, and $a_{1}, \ldots, a_{k}\in A$, there is $\xi_{i}^{F} +\eta_{F}\in X^{*}$ such that
\[
\|D(a) -\Big(a_{i}\cdot (\xi_{i}^{F} + \eta^{F}) - (\xi_{i}^{F}+\eta^{F})\cdot a_{i}\Big)\| < \varepsilon \qquad i=1, \ldots, k\,.
\]
For infinite $\Lambda$,  $\ell^{p}(A_{\lambda})$ contains an isometric copy of $\ell^{p}$, and so has a SUM configuration, whence it fails to be \ApA, \cite[Theorem 2.5]{CG}.


The final statement follows from Theorem \ref{sums}.

\end{proof}


\begin{theorem}\label{sums} 
Suppose that $(A_{\lambda})_{\lambda\in \Lambda}$ is a family of \ApsC\ (semi-amenable) algebras.  Then $\ell^{p}( A_{\lambda})$ and $c_{0}(A_{\lambda})$ are  \ApsC (semi-amenable).
\end{theorem}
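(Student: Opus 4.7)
By Theorem~\ref{equiv} it suffices to treat approximate semi-contractibility; set $A = \ell^p(A_\lambda)$ (respectively $c_0(A_\lambda)$) and let $D: A \to X$ be a continuous derivation into a Banach $A$-bimodule. For each $\lambda \in \Lambda$ the inclusion $A_\lambda \hookrightarrow A$ makes $X$ into a Banach $A_\lambda$-bimodule and $D|_{A_\lambda}$ into a continuous derivation, so by \ApsCy\ of $A_\lambda$ there exist nets $(\xi^\lambda_j),(\eta^\lambda_j) \subset X$ with $D(a) = \lim_j (a\cdot \xi^\lambda_j - \eta^\lambda_j\cdot a)$ for $a \in A_\lambda$. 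By Theorem~\ref{parallel}, $A_\lambda$ also has a left approximate identity $(e^\lambda_\alpha)$ and a right approximate identity $(f^\lambda_\beta)$. For $a_\lambda \in A_\lambda$ I write $\tilde{a}_\lambda \in A$ for its image under the natural coordinate embedding.

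Fix $\varepsilon > 0$ and $a_1,\ldots, a_k \in A$. Since each $a_i$ has vanishing tail in the $c_0$ or $\ell^p$ sense, choose a finite $F \subset \Lambda$ with $\|a_i - a_i^F\| < \varepsilon/(3(\|D\|+1))$, where $a_i^F$ is the restriction of $a_i$ to coordinates in $F$. For each $\lambda \in F$, pick $\xi^\lambda, \eta^\lambda \in X$ so that
$$\|D(a_{i,\lambda}) - (a_{i,\lambda}\cdot \xi^\lambda - \eta^\lambda\cdot a_{i,\lambda})\| < \varepsilon/(3|F|) \quad (i=1,\ldots,k),$$
and then pick $\beta(\lambda), \alpha(\lambda)$ far enough along the approximate identities that
$$\|(a_{i,\lambda}f^\lambda_{\beta(\lambda)} - a_{i,\lambda})\cdot \xi^\lambda\|, \ \|\eta^\lambda \cdot(e^\lambda_{\alpha(\lambda)}a_{i,\lambda} - a_{i,\lambda})\| < \varepsilon/(6|F|) \quad (i=1,\ldots,k).$$
Set
$$\xi = \sum_{\lambda \in F} \tilde{f}^\lambda_{\beta(\lambda)}\cdot \xi^\lambda \in X, \quad \eta = \sum_{\lambda \in F} \eta^\lambda\cdot \tilde{e}^\lambda_{\alpha(\lambda)} \in X.$$

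The direct-sum structure of $A$ is what makes the construction work: since $\tilde{f}^\lambda_{\beta(\lambda)}$ is supported only in coordinate $\lambda$, the coordinatewise product in $A$ yields $a \cdot \tilde{f}^\lambda_{\beta(\lambda)} = \widetilde{a_\lambda f^\lambda_{\beta(\lambda)}}$ for every $a = (a_\mu) \in A$, and analogously on the right. Hence
$$a_i\cdot \xi - \eta\cdot a_i = \sum_{\lambda \in F}\bigl[(a_{i,\lambda}f^\lambda_{\beta(\lambda)})\cdot \xi^\lambda - \eta^\lambda\cdot(e^\lambda_{\alpha(\lambda)}a_{i,\lambda})\bigr],$$
which by the three estimates above is within $2\varepsilon/3$ of $\sum_{\lambda \in F} D(a_{i,\lambda}) = D(a_i^F)$, and $D(a_i^F)$ is within $\varepsilon/3$ of $D(a_i)$ by the choice of $F$. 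Running this construction over the directed set of pairs (finite subset of $A$, positive real $\varepsilon$) yields the desired approximating net, so $A$ is \ApsC. The main obstacle is precisely this localization step: $X$ is only assumed to be an $A$-bimodule, not a direct sum of $A_\lambda$-bimodules, so without the approximate-identity cut-off factors $\tilde{f}^\lambda_{\beta(\lambda)}$ and $\tilde{e}^\lambda_{\alpha(\lambda)}$ the action of $a_\mu$ ($\mu \neq \lambda$) on $\xi^\lambda$ would not vanish, and cross terms across different $\lambda$'s would not cancel. Theorem~\ref{parallel} supplies exactly the approximate identities needed to execute these cut-offs.
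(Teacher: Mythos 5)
Your proof is correct and follows essentially the same route as the paper: truncate to a finite block where the tail of $a_i$ is small, use approximate semi-contractibility of the coordinate algebras there, and insert one-sided approximate-identity factors supported in that block (supplied by Theorem \ref{parallel}) so that the implementing elements are localized and the full element $a$ acts through its block coordinates. The only cosmetic difference is that you handle each coordinate $\lambda\in F$ separately where the paper restricts $D$ to the whole block $P_F A$ (invoking the finite direct-sum case) and phrases the conclusion as an iterated limit, but the underlying cut-off mechanism and bookkeeping are the same.
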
 \begin{proof}
 

For $\lambda \in \Lambda$, let $(e_{\alpha}^{\lambda})$ and $(f_{\beta}^{\lambda})$ be left and right approximate identities for $A_{\lambda}$.

 Take  $F = \{\lambda_{1}, \ldots, \lambda_{|F|}\}\in \cF$. For $i=1, \ldots, |F|$, let $\Lambda_{i}$ and $\Lambda_{i}'$ be the directed sets for $(e_{\alpha}^{\lambda_{i}})$ and $(f_{\beta}^{\lambda_{i}})$, and set $\Lambda_{F} = \Lambda_{1}\times\cdots\times \Lambda_{|F|}$ and $\Lambda_{F}' = \Lambda_{1}'\times\cdots\times \Lambda_{|F|}'$ with the product directions.

For $\underline{\alpha} = \{\alpha_{1}, \ldots, \alpha_{|F|}\}\in \Lambda_{F}$, set
\[
e_{\underline{\alpha}}(\lambda)  =
\begin{cases}
e_{\alpha_{i}}^{\lambda_{i}} & \lambda\ = \lambda_{i}\\
0 & {\rm otherwise}.\\
\end{cases} 
 \]
 Similarly for $f_{\underline{\beta}}$.  Then for $a\in c_{0}(A_{\lambda})$, 
 \[
e_{\underline{\alpha}}(\lambda) a =   
\begin{cases}
e_{\alpha_{i}}^{\lambda_{i}}a_{\lambda_{i}} & \lambda = \lambda_{i}\\
0 & \rm otherwise.
\end{cases}
\]
 Thus
 \[
\|e_{\underline{\alpha}}(\lambda) a - a\| = \max_{1\leq i\leq |F|}| \|e_{\alpha_{i}}^{\lambda_{i}}a_{\lambda_{i}}-a_{\lambda_{i}}\| + \sup_{\lambda\not\in F} \|a_{\lambda}\|\,.
 \]
Given $\varepsilon > 0$ first choose $F$ such that the second term is less than $\varepsilon/2$, then, for this fixed $F$, take $\alpha_{i}^{\lambda_{1}}, \ldots, \alpha_{|F|}^{\lambda_{|F|}}$ such that the first term is less than $\varepsilon /2$.  Then $\|e_{\underline{\alpha}}(\lambda) a - a\| <\varepsilon$, and hence $(e_{\underline{\alpha}})$ is a left approximate identity for $c_{0}(A_{\lambda})$.  Similarly for $(f_{\underline{\beta}})$ on the right.

For a continuous derivation $D: c_{0}(A_{\lambda}) \to X$
into a Banach $c_{0}(A_{\lambda})$-bimodule, its restriction to  $P_{F}c_{0}(A_{\lambda})$ is approximately semi-inner, implemented by $(\xi_{i}^{F}), (\eta_{i}^{F})$.
Thus for $a\in c_{0}(A_{\lambda})$,    

\begin{eqnarray*}
D(a) &=& \lim_F D(P_F a) =\lim_{F} \lim_{i}(P_{F}a\cdot \xi_{i}^{F} - \eta_{i}^{F}\cdot P_{F}a)\\
&=& \lim_F\lim_{i}\lim_{\underline{\alpha}, \underline{\beta}} \Big(P_F af_{\underline{\beta}}\cdot\xi_i^{F} - \eta_i^{F}\cdot P_Fe_{\underline{\alpha}}a\Big)\\
&=& \lim_F \lim_{i}\lim_{\underline{\alpha}, \underline{\beta}} \Big(a\cdot(P_F f_{\underline{\beta}}\cdot \xi_i^{F}) - (\eta_{i}^{F}\cdot P_Fe_{\underline{\alpha}})\cdot a\Big)\,.
\end{eqnarray*}

That $D$ is approximately semi-inner  follows as in previous arguments.
\end{proof}

\begin{remark}  The use of approximate identities is only to enable 
the shifting of the action of $P_{F}$ away from $a$.

\end{remark}
A similar argument shows the following. 


\begin{theorem}\label{simpletensor} Suppose that $A$ is an  \ApsA\ Banach algebra with a central multiplier-bounded approximate identity.  Let $B$ be a Banach function algebra which has a multiplier-bounded approximate identity of elements of finite support.  Then $A \tensor B$ is approximately semi-amenable.
\end{theorem}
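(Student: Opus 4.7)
The strategy parallels the proof of Theorem \ref{sums}, with the finite-support approximate identity $(p_\gamma)$ of $B$ playing the role of the projections $P_F$. Let $(e_\alpha)$ be the central multiplier-bounded approximate identity of $A$ and $(p_\gamma)$ the multiplier-bounded approximate identity of $B$ with $\operatorname{supp}(p_\gamma) = F_\gamma$ finite; let $D\colon A\tensor B \to X^{*}$ be a continuous derivation.

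First I would observe that for each fixed $\gamma$, the right ideal $(A\tensor B)\cdot p_\gamma$ equals $A\tensor(Bp_\gamma)$, and since $Bp_\gamma$ is a finite-dimensional subalgebra supported on $F_\gamma$, there is a Banach algebra isomorphism $A\tensor(Bp_\gamma) \cong \bigoplus_{s\in F_\gamma} A$ via the evaluations at the points of $F_\gamma$. By iteration of Theorem \ref{directsums}(ii), this finite direct sum is boundedly approximately semi-amenable (each summand $A$ carries a central multiplier-bounded approximate identity, and this property is preserved by finite direct sums). Hence the restriction of $D$ to $(A\tensor B)p_\gamma$, viewed as a continuous derivation into $X^{*}$ with the inherited actions, is approximately semi-inner; for each finite $T\subset A\tensor B$ and $\varepsilon>0$ we obtain $\xi^{\gamma,T,\varepsilon}, \eta^{\gamma,T,\varepsilon}$ in $((A\tensor B)^{\#}\tensor(A\tensor B)^{\#})^{**}$ with $\pi^{**}(\xi^{\gamma,T,\varepsilon})=\pi^{**}(\eta^{\gamma,T,\varepsilon})=e$ and
\[
\bigl\|(tp_\gamma)\cdot\xi^{\gamma,T,\varepsilon}-\eta^{\gamma,T,\varepsilon}\cdot(tp_\gamma)\bigr\|<\varepsilon\qquad(t\in T).
\]

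Next I would convert estimates on $tp_\gamma$ to estimates on $t$ using the approximate identities. By the multiplier-boundedness of $(p_\gamma)$, we have $\|tp_\gamma\|\le K\|t\|$ uniformly, and for a fixed finite set $T$ we may choose $\gamma$ large enough that $\|tp_\gamma - t\|$ and the symmetric left-sided version $\|e_\alpha t - t\|$ are both small on $T$; centrality of $(e_\alpha)$ lets one shuttle the approximate units past the implementers $\xi^{\gamma,T,\varepsilon}, \eta^{\gamma,T,\varepsilon}$ exactly as in the calculation used in Theorem \ref{directsums}(ii), so the error incurred is controlled by the multiplier bounds. Absorbing the approximate-identity factors into the implementers yields, on enlarging the index set to $\Lambda=\cF(A\tensor B)\times(0,\infty)$, nets $(\xi_\lambda),(\eta_\lambda)$ with $\pi^{**}(\xi_\lambda)=\pi^{**}(\eta_\lambda)=e$ and $\|t\cdot\xi_\lambda-\eta_\lambda\cdot t\|\to 0$ for each $t\in A\tensor B$.

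By Theorem \ref{semiamen} this forces $D$ to be approximately semi-inner, which is the desired conclusion. The main obstacle is the bookkeeping in the absorption step: one must pass from an implementer that only controls $tp_\gamma$ to one that controls $t$ itself, uniformly over finite subsets of $A\tensor B$. Centrality of $(e_\alpha)$ together with multiplier-boundedness of both approximate identities is precisely what makes the shuttling argument and the final direct-limit diagonalisation go through; without centrality one would pick up an unbounded commutator term, and without finite support one could not reduce to the direct-sum hypothesis of Theorem \ref{directsums}.
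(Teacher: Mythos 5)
Your overall strategy is the paper's: reduce to $A\tensor Bp_\gamma\cong A^{n}$, note that this finite sum is \ApsA, and then use the approximate identities to pass back to all of $A\tensor B$. But the passage back, as you describe it, has a genuine gap. First, a smaller point: you invoke Theorem \ref{directsums}(ii), whose hypothesis is that the summands are \emph{boundedly} approximately semi-amenable; here $A$ is only assumed \ApsA, so you may only use part (i) -- which is in fact all that is needed, but it means no bound on the implementing elements is available to you. That is fatal for your ``absorption'' step: to pass from control of $(tp_\gamma)\cdot\xi-\eta\cdot(tp_\gamma)$ to control of $t\cdot\xi-\eta\cdot t$ you must estimate $\|(t-tp_\gamma)\cdot\xi-\eta\cdot(t-tp_\gamma)\|$, which is of the order $\|t-tp_\gamma\|\,(\|\xi\|+\|\eta\|)$. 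The elements $\xi,\eta$ are chosen \emph{after} $\gamma$ (they implement the restriction to the subalgebra determined by $\gamma$) and their norms are uncontrolled, while $\|t-tp_\gamma\|$ was fixed when $\gamma$ was chosen; you cannot shrink it afterwards without changing $\gamma$ and hence $\xi,\eta$ again. So ``the error is controlled by the multiplier bounds'' is not correct -- the multiplier bounds of $(e_\alpha)$ and $(p_\gamma)$ say nothing about the size of the implementers. (There is also a formal slip: $p_\gamma$ and $e_\alpha$ are not elements of $A\tensor B$, so ``$tp_\gamma$'' and ``$e_\alpha t$'' are multiplier actions which cannot themselves be shifted onto elements of the dual module.)

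The paper avoids this entirely by never comparing the two adjoint-type expressions. One first proves, using the multiplier bounds, that $\sum_k e_\alpha a_k\otimes f_\beta b_k\to\sum_k a_k\otimes b_k$ in $A\tensor B$; by continuity of $D$ this compares $D(t)$ with $D\bigl(t\,(e_\alpha\otimes f_\beta)\bigr)$, where the crucial point is that $e_\alpha\otimes f_\beta$ is an \emph{element} of $A\tensor B$ (this is why the $A$-side factor $e_\alpha$ is needed at all). Since $t(e_\alpha\otimes f_\beta)$ lies in $A\tensor f_\beta B\cong A^{n_\beta}$, the restricted derivation is approximately semi-inner with implementers $\xi_i,\eta_i\in X^{*}$, and then centrality of $(e_\alpha)$ together with commutativity of $B$ lets one rewrite $\bigl(t(e_\alpha\otimes f_\beta)\bigr)\cdot\xi_i-\eta_i\cdot\bigl((e_\alpha\otimes f_\beta)t\bigr)$ as $t\cdot\bigl((e_\alpha\otimes f_\beta)\cdot\xi_i\bigr)-\bigl(\eta_i\cdot(e_\alpha\otimes f_\beta)\bigr)\cdot t$; the approximate-identity factor is absorbed into new implementers with \emph{no} error term, and an iterated-limit/finite-set argument finishes the proof. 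Note also that you do not need the intrinsic elements of Theorem \ref{semiamen} at all (and multiplying such elements by $e_\alpha\otimes f_\beta$ would destroy the condition $\pi^{**}(\xi_i)=e$); working with implementers in $X^{*}$ for the given derivation, as the paper does, is both sufficient and simpler. Your proposal could be repaired by reorganising it along these lines, but as written the key step does not go through.
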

 
\begin{proof}
Let $K$ be a multiplier-bound for the central approximate identity $(e_{\alpha})$ of $A$ and $(f_{\beta})$ of $B$.  Set $(\alpha, \beta)\leq (\alpha', \beta')$ to mean $\alpha\leq \alpha'$ and $\beta \leq \beta'$. Let   $X$ be $A\tensor B$--bimodule, $D:A\tensor B\to X^{*}$ a  continuous derivation.

Then, given $\sum_{k=1}^{\infty}a_{k}\otimes b_{k} \in A\tensor B$ we have
\begin{eqnarray*}
\|\sum_{k=1}^\infty  e_{\alpha}a_k \otimes f_{\beta} b_k - \sum_{k=1}^{\infty} a_k \otimes b_k\|
&\leq&
\|\sum_{k=1}^N   e_{\alpha}a_k \otimes f_{\beta} b_k - \sum_{k=1}^{N} a_k \otimes b_k\| \\
&&+  (K ^{2}+1)\sum_{k=N+1}^\infty \|a_k\| \|b_k\|\,.
\end{eqnarray*}
So given $\varepsilon > 0$ we can take $N$ such that the second sum is less than $\varepsilon/2$. Then for this $N$ we can choose $\alpha$ and $\beta$ such that the first (finite) sum is less than $\varepsilon/2$.  Thus

\begin{equation}\label{tensorcgce}
\sum_{k=1}^\infty  e_{\alpha}a_k \otimes f_{\beta} b_k \to \sum_{k=1}^{\infty} a_k \otimes b_k\,.
\end{equation}

For each $\beta$, define $n_{\beta} = |{\rm supp}(f_{\beta})|$. Then $f_{\beta}B\simeq \mathbb C^{n_{\beta}}$ so that $A\tensor f_\beta B\simeq A^{n_{\beta}}$  and so is \ApsA.  Thus there are $(\xi_{i}), (\eta_{i})\in X$ such that, for each $a =\sum_{k} a_{k}\otimes  b_{k}\in A\tensor B$, 
\begin{eqnarray*}
D(\sum_k a_k\otimes  f_{\beta}b_k) = \lim_i \left((\sum_k a_k\otimes f_{\beta}b_k)\cdot \xi_i -\eta_i\cdot ( \sum_k a_k\otimes f_{\beta}  b_k)\right)\,.\end{eqnarray*}
So by \eqref{tensorcgce}
\begin{eqnarray*}
D(\sum_k a_k\otimes b_k)
&=& \lim_{\alpha, \beta} \lim_i \left((\sum_k e_{\alpha}a_k \otimes f_{\beta} b_k)\cdot \xi_i -\eta_i\cdot ( \sum_k e_{\alpha}a_k \otimes f_{\beta} b_k)\right)\\
&& \kern -1.75cm = \lim_{\alpha, \beta} \lim_i \left(\sum_k a_k\otimes b_k\cdot(e_{\alpha} \otimes f_{\beta}\cdot \xi_i) -(\eta_i\cdot e_{\alpha}\otimes f_{\beta} \cdot  \sum_k a_k\otimes b_k\right)
\end{eqnarray*}

The result follows as before.
\end{proof}

\begin{corollary}
For each $n\geq 2$, the algebra $A(\mathbb F_{n})\tensor A(\mathbb F_{n})$ is \ApsA.
\end{corollary}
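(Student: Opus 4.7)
The plan is to verify that the pair $(A,B) = (A(\mathbb F_n), A(\mathbb F_n))$ satisfies the hypotheses of Theorem \ref{simpletensor}, and then invoke that theorem directly. There are three conditions to check: that the left factor is \ApsA, that it carries a \emph{central} multiplier-bounded approximate identity, and that the right factor is a Banach function algebra with a multiplier-bounded approximate identity of elements of \emph{finite} support.

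The first condition is immediate from Corollary \ref{F2}, which asserts that $A(\mathbb F_n)$ is \BASC\ for $n\geq 2$; since bounded approximate semi-contractibility trivially implies \ApsA, the left factor satisfies the hypothesis of Theorem \ref{simpletensor}. For the second and third conditions, we appeal to the same ingredient used in the proof of Corollary \ref{F2}, namely Haagerup's theorem \cite[Theorem 2.1]{Haag}, which provides a multiplier-bounded approximate identity $(f_\beta)$ in $A(\mathbb F_n)$ consisting of functions of compact support on $\mathbb F_n$. Since $\mathbb F_n$ is a discrete group, compact support is the same as finite support, so $(f_\beta)$ is a multiplier-bounded approximate identity of elements of finite support, verifying the hypothesis on the right factor. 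Moreover, $A(\mathbb F_n)$ is a commutative Banach algebra (a Banach function algebra on $\mathbb F_n$), so $(f_\beta)$ is automatically central, verifying the remaining hypothesis on the left factor.

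With all three conditions in place, Theorem \ref{simpletensor} applies and yields that $A(\mathbb F_n)\tensor A(\mathbb F_n)$ is approximately semi-amenable, as claimed. No serious obstacle arises here: the entire content of the statement is that the preceding tensor-product machinery can be fed two copies of the Fourier algebra of $\mathbb F_n$, with Haagerup's approximate identity simultaneously supplying both the ``central multiplier-bounded'' input for one slot and the ``multiplier-bounded, finite support'' input for the other. It is worth noting that this is genuinely new information: neither factor is \ApA\ (by Corollary \ref{F2}), so the conclusion cannot be obtained by tensoring two approximately amenable algebras, and indeed illustrates the utility of the semi-amenable framework.
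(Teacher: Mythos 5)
Your proof is correct and follows essentially the same route as the paper: cite Corollary \ref{F2} for approximate semi-amenability of $A(\mathbb F_{n})$ and Haagerup's multiplier-bounded approximate identity of finitely supported elements, then apply Theorem \ref{simpletensor}. The only difference is that you spell out the (automatic, by commutativity) centrality and the compact-equals-finite support point, which the paper leaves implicit.
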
  
\begin{proof}
By Corollary \ref{F2}, $A(\mathbb F_{n})$ is \ApsA, and as noted there, it has a multiplier bounded approximate identity of elements in $c_{00}$.  Thus Theorem \ref{simpletensor}  applies.
\end{proof}

\begin{remark}
The argument of Theorem \ref{simpletensor} works for $B =c_0(\Lambda)$ and $B = \ell^{p}(\Lambda)$.  In particular, $\ell^{p_{1}}\tensor \ell^{p_{2}}$ is \ApsA\ for $1\leq p_{1}, p_{2} <\infty$.  Are these algebras \BApsA? (Yes if $p_{1} = p_{2}=1$.)

Unfortunately this method sheds no light on the situation for general \ApsA\ $B$.  

Note that the example in \cite[Theorem 2.3]{GhaLoy2}  shows that the tensor product of \ApsA\ algebras is not  \ApsA\ in general, since  for algebras with identity, \ApAy\ and \ApsA\ are the same, Proposition \ref{equality1}.

 \end{remark}
 The notion of semi-inner derivations arose in consideration of \ApAy\ in the context of tensor products, \cite[\S4]{GhaLoy2}.  In fact the following follows by obvious minor adjustments to the proof \cite[Theorem 4.1]{GhaLoy2}.
 
  \begin{theorem}\label{tensor}
 Suppose that
  Let $A$ and $B$ be non-zero Banach algebras.  Suppose that $A\tensor B$ is 
  \begin{enumerate}
  \item[(i)] approximately semi-amenable, or
  \item[(ii)] boundedly approximately semi-amenable, or
  \item[(iii)] boundedly approximately semi-contractible.
  \end{enumerate}
   
  Then $A$ and $B$ have the same property. \hfill $\Box$
  \end{theorem}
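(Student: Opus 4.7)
The plan is to follow the proof of \cite[Theorem 4.1]{GhaLoy2} with the routine adjustments that replace every occurrence of an approximately inner approximation $({\rm ad}_{\xi_i})$ by an approximately semi-inner approximation $({\rm ad}_{\xi_i, \eta_i})$, tracking bounds in cases (ii) and (iii). By the symmetry of $A$ and $B$, it suffices to show that $A$ inherits each of the three properties; the argument for $B$ is identical.

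Given a Banach $A$-bimodule $X$ and a continuous derivation $D:A\to X$ (for case (iii)) or $D:A\to X^{*}$ (for cases (i), (ii)), the main step is to construct a Banach $(A\tensor B)$-bimodule $\tilde X$ together with a continuous derivation $\tilde D:A\tensor B \to \tilde X$ (or $\tilde X^{*}$) that extends $D$ in a controllable way. For case (iii) the natural choice is $\tilde X = X\tensor B$ with actions
\[
(a\otimes b)\cdot(x\otimes c) = (a\cdot x)\otimes bc, \qquad (x\otimes c)\cdot(a\otimes b) = (x\cdot a)\otimes cb,
\]
and $\tilde D(a\otimes b) = D(a)\otimes b$; a direct computation confirms this is a derivation. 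Applying the \BApSCy\ of $A\tensor B$ produces nets $(\Xi_i), (H_i)\subset \tilde X$ with $\tilde D(z) = \lim_i(z\cdot \Xi_i - H_i\cdot z)$ for $z\in A\tensor B$ and $\|{\rm ad}_{\Xi_i, H_i}\|\leq K$. Fix $b_0\in B$ of norm one and, by Hahn-Banach, $\psi\in B^{*}$ with $\psi(b_0)=1$ and $\|\psi\|=1$. The slice ${\rm id}_X\otimes \psi : X\tensor B\to X$ is contractive; evaluating the displayed limit at $z=a\otimes b_0$ and applying this slice yields elements $\xi_i, \eta_i\in X$ with $D(a)=\lim_i(a\cdot \xi_i - \eta_i\cdot a)$ and $\|{\rm ad}_{\xi_i, \eta_i}\|\leq K\|b_0\| = K$. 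This settles (iii). Case (i) follows by the same construction without bounds, or more cleanly by invoking Theorem \ref{equiv} to identify \ApsA\ with \ApsC\ throughout.

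The subtler case is (ii), where $D$ maps into the dual module $X^{*}$ and so must $\tilde D$. Here one takes $\tilde X^{*} = (X\tensor B)^{*}\cong \Lcal(B, X^{*})$ as the target dual $(A\tensor B)$-bimodule, and the formula for $\tilde D(a\otimes b)$ must be defined in this dual space so that the derivation identity is satisfied --- this is the point where the ``minor adjustments'' from \cite{GhaLoy2} are most delicate, as the formula has to incorporate the $B$-structure consistently. An alternative and cleaner route is to operate at the level of the characterization Theorem \ref{semiamen}: starting from nets $(\Xi_i),(H_i)\in ((A\tensor B)^{\#}\tensor (A\tensor B)^{\#})^{**}$ witnessing the \BApSAy\ of $A\tensor B$, one applies a contractive slice induced by $b_0$ and $\psi$ to produce nets $(\xi_i),(\eta_i)\in (A^{\#}\tensor A^{\#})^{**}$ that satisfy the conditions of Theorem \ref{semiamen} for $A$. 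The main obstacle is arranging compatibility of the slice with the product map $\pi$ and the identity condition $\pi^{**}(\Xi_i)=\pi^{**}(H_i)=e$; this is handled by choosing $\psi$ so that its extension to $B^{\#}$ acts as the augmentation on the adjoined identity, after which the uniform bounds transfer because all relevant slices are contractive.
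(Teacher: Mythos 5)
Your treatment of cases (iii) and (i) is sound, and it is the intended route (the paper simply adapts the argument of \cite[Theorem 4.1]{GhaLoy2} in this module--transfer--and--slice fashion): $D\otimes\mathrm{id}_{B}$ is indeed a continuous derivation of $A\tensor B$ into $X\tensor B$ with your actions, and applying $\mathrm{id}_{X}\otimes\psi$ to $(a\otimes b_{0})\cdot\Xi_{i}-H_{i}\cdot(a\otimes b_{0})$ produces $a\cdot\xi_{i}-\eta_{i}\cdot a$ where $\xi_{i},\eta_{i}$ are the slices of $\Xi_{i},H_{i}$ by the translated functionals $c\mapsto\psi(b_{0}c)$ and $c\mapsto\psi(cb_{0})$ (not by $\psi$ itself --- a presentational point only), independent of $a$ and with $\|{\rm ad}_{\xi_{i},\eta_{i}}\|\leq K$; and (i) does reduce to (iii) through Theorem \ref{equiv}.

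The genuine gap is case (ii), which is precisely the case that cannot be reduced to (iii) (Proposition \ref{difference}), and neither of your two suggestions closes it. In your first route you never exhibit $\tilde D$, and no formula of the obvious kind exists: with the dual actions on $(X\tensor B)^{*}\cong\Lcal(B,X^{*})$ induced by your actions on $X\tensor B$, the ansatz $\tilde D(a\otimes b)(c)=\lambda(cb)D(a)$ (or $\lambda(bc)D(a)$) satisfies the derivation identity only when $\lambda(b'cb)=\lambda(cbb')$ for all $b,b',c\in B$, i.e.\ only for a trace-like $\lambda$, which a general $B$ need not possess. In your second route the obstruction is not where you locate it: a slice $a\otimes b\mapsto\psi(b)a$ is not multiplicative, hence does not intertwine the product maps of $(A\tensor B)^{\#}\tensor(A\tensor B)^{\#}$ and $A^{\#}\tensor A^{\#}$, so the hypothesis $\pi^{**}(\Xi_{i})=e_{A\tensor B}$ gives no control whatever over $\pi_{A}^{**}$ of the sliced elements; no choice of $\psi$ (augmentation-compatible or otherwise) repairs this unless $\psi$ is a character of $B$, which is exactly what the theorem does not assume. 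The correct adjustment stays at the module level: give $Z=X\tensor B^{*}$ the actions $(a\otimes b)\cdot(x\otimes\beta)=a\cdot x\otimes\beta\cdot b$ and $(x\otimes\beta)\cdot(a\otimes b)=x\cdot a\otimes b\cdot\beta$, where $(\beta\cdot b)(c)=\beta(cb)$ and $(b\cdot\beta)(c)=\beta(bc)$; then $\langle\tilde D(a\otimes b),x\otimes\beta\rangle:=\langle D(a),x\rangle\,\beta(b)$ defines a continuous derivation $A\tensor B\to Z^{*}$ (this identity does check out), the dual module $Z^{*}$ contains $X^{*}\tensor B$ with its natural actions, and your slicing argument from (iii), now applied with the functionals $x\mapsto\langle T,x\otimes b_{0}\cdot\psi\rangle$ and $x\mapsto\langle T,x\otimes\psi\cdot b_{0}\rangle$, returns $D$ as a boundedly approximately semi-inner derivation into $X^{*}$ with the same constant $K$.
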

   



If the  boundedness of the \AI\ is dropped in Theorem \ref{parallel}, then in certain cases there is a weaker result. 

\begin{theorem}\label{cai} Suppose that $A$ is \ApsA\,, $J$ is a closed (two-sided) ideal in $A$ with an approximate identity consisting of central idempotents.  Then $J$ is approximately semi-amenable.
\end{theorem}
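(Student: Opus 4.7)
The plan is to reduce the problem to the known fact that unital \ApsA\ algebras are \ApA, applied to the corner subalgebra $B_\alpha := p_\alpha A = p_\alpha J$ for each index $\alpha$. First I would verify that centrality of $p_\alpha$ in $J$ propagates to centrality in $A$: for any $a\in A$, both $p_\alpha a$ and $a p_\alpha$ lie in $J$, so applying centrality in $J$ gives
\begin{equation*}
p_\alpha a = p_\alpha(p_\alpha a) = (p_\alpha a) p_\alpha = p_\alpha a p_\alpha = p_\alpha(a p_\alpha) = (a p_\alpha) p_\alpha = a p_\alpha.
\end{equation*}
Thus $B_\alpha$ is a closed unital subalgebra of $J$ with identity $p_\alpha$, and the map $a \mapsto p_\alpha a$ is a continuous algebra epimorphism $A \to B_\alpha$ (centrality yields $(p_\alpha a)(p_\alpha b) = p_\alpha ab$). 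By Proposition \ref{quotients}, $B_\alpha$ is \ApsA, and being unital it is \ApA\ by Proposition \ref{cbaistrong}.

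Given a continuous derivation $D: J \to X^*$ into a dual Banach $J$-bimodule, I would define $D_\alpha: B_\alpha \to p_\alpha X^* p_\alpha$ by $D_\alpha(c) := p_\alpha D(c) p_\alpha$. Centrality shows $p_\alpha X^* p_\alpha$ is a weak$^*$-closed $B_\alpha$-submodule of $X^*$, hence a dual Banach $B_\alpha$-bimodule; a short computation using $p_\alpha c = c = c p_\alpha$ for $c \in B_\alpha$ verifies $D_\alpha$ is a derivation. By \ApA\ of $B_\alpha$ there is a net $(\xi_i^\alpha) \subset p_\alpha X^* p_\alpha$ with $D_\alpha(c) = \lim_i (c\cdot \xi_i^\alpha - \xi_i^\alpha \cdot c)$. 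Taking $c = p_\alpha b$ for $b \in J$, and using centrality together with $\xi_i^\alpha = p_\alpha \xi_i^\alpha = \xi_i^\alpha p_\alpha$ to absorb the $p_\alpha$ factors, I obtain
\begin{equation*}
p_\alpha D(b) p_\alpha = D_\alpha(p_\alpha b) = \lim_i (b\cdot \xi_i^\alpha - \xi_i^\alpha \cdot b).
\end{equation*}

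The crucial remaining step is to express the error $D(b) - p_\alpha D(b) p_\alpha$ as a vanishing remainder plus a semi-inner term. Applying the derivation identity to $D(p_\alpha b)$ and $D(p_\alpha b\cdot p_\alpha) = D(p_\alpha b)$, along with centrality and $p_\alpha D(p_\alpha) p_\alpha = 0$ (which follows from $D(p_\alpha^2) = D(p_\alpha)$), one finds
\begin{equation*}
D(b) - p_\alpha D(b) p_\alpha = \bigl[D(b) - D(p_\alpha b)\bigr] + b\cdot(p_\alpha D(p_\alpha)) + (D(p_\alpha) p_\alpha)\cdot b.
\end{equation*}
Setting $\xi := \xi_i^\alpha + p_\alpha D(p_\alpha)$ and $\eta := \xi_i^\alpha - D(p_\alpha) p_\alpha$ then yields
\begin{equation*}
D(b) - (b\cdot \xi - \eta\cdot b) = \bigl[p_\alpha D(b) p_\alpha - (b\cdot \xi_i^\alpha - \xi_i^\alpha \cdot b)\bigr] + \bigl[D(b) - D(p_\alpha b)\bigr].
\end{equation*}
Given a finite $F \subset J$ and $\varepsilon > 0$, I would pick $\alpha$ first so that the second bracket is smaller than $\varepsilon/2$ uniformly on $F$, then pick $i$ making the first bracket less than $\varepsilon/2$; indexing by $(F,\varepsilon)$ produces nets exhibiting $D$ as approximately semi-inner. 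The main obstacle is recognising that the two extra terms from the product rule come out in the precise form $b\cdot u + v\cdot b$, so that absorbing them as corrections to a single $\xi_i^\alpha$ requires the asymmetric freedom of the semi-inner formalism---the constraint $\xi = \eta$ of the inner case would force $D(p_\alpha) = 0$, which cannot be arranged in general.
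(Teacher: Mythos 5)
Your argument is correct, but it takes a different route from the paper's, which is considerably shorter. The paper also localizes to the unital corners, but it treats $e_{\alpha}J$ (identity $e_{\alpha}$) as a closed ideal of $A$ and invokes Theorem \ref{parallel} to conclude that $e_{\alpha}J$ is itself \ApsA; it then simply restricts $D$ to $e_{\alpha}J$, obtains \emph{two} nets with $D(e_{\alpha}a)=\lim_{i}(e_{\alpha}a\cdot\xi_{\alpha,i}-\eta_{\alpha,i}\cdot e_{\alpha}a)$, uses centrality to rewrite this as $a\cdot(e_{\alpha}\cdot\xi_{\alpha,i})-(\eta_{\alpha,i}\cdot e_{\alpha})\cdot a$, and finishes with $D(a)=\lim_{\alpha}D(e_{\alpha}a)$ and an iterated-limit (finite-set) argument --- no compression of the module, no upgrade to \ApAy, and no correction terms. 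You instead observe (correctly, and this is a nice point not in the paper) that a central idempotent of $J$ is automatically central in $A$, realize the corner as a quotient of $A$ via $a\mapsto p_{\alpha}a$ so that Proposition \ref{quotients} applies, upgrade the unital corner to \ApA\ via Proposition \ref{cbaistrong}, and then pay for the single implementing net by compressing $D$ to $p_{\alpha}D(\cdot)p_{\alpha}$ and producing the explicit semi-inner error terms $b\cdot(p_{\alpha}D(p_{\alpha}))+(D(p_{\alpha})p_{\alpha})\cdot b$ from the product rule (your identity checks out, as does $p_{\alpha}D(p_{\alpha})p_{\alpha}=0$, which you need to identify $p_{\alpha}D(b)p_{\alpha}$ with $D_{\alpha}(p_{\alpha}b)$). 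What your route buys is the sharper local statement that each corner is genuinely \ApA\ and a concrete description of how far $D$ is from being implemented by a single net; what the paper's route buys is brevity, since keeping two nets on the corner makes all of your error analysis unnecessary. Note also that even within your approach the compression step is avoidable: since \ApAy\ coincides with \ACy, you could apply approximate innerness of $D|_{B_{\alpha}}$ directly in $X^{*}$, getting $D(p_{\alpha}b)=\lim_{i}(b\cdot(p_{\alpha}\cdot\xi_{i})-(\xi_{i}\cdot p_{\alpha})\cdot b)$, which is already of semi-inner form and reduces your proof to essentially the paper's.
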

 
 \begin{proof}
Let $(e_{\alpha})$ be the approximate identity for $J$ consisting of central idempotents.  
 Given $\alpha$, $e_{\alpha}J = e_{\alpha}Je_{\alpha}$ is a closed two-sided ideal with identity $e_{\alpha}$.  By Theorem \ref{parallel}, $e_{\alpha}J$ is \ApsA.  Let $X$ be a Banach $J$-bimodule, $D:J\to X$ a continuous derivation.  Then $D|_{e_{\alpha}J}$ is approximately semi-inner, so there is a nets $(\xi_{\alpha, i})_{i}, (\eta_{\alpha, i})_{i}\subset X$ such that
\[
D(e_{\alpha}a) = \lim_{i}(e_{\alpha}a\cdot \xi_{\alpha, i})- \eta_{\alpha, i}\cdot e_{\alpha}a)\,, \quad (a\in J)\,.
\]
Thus
\begin{eqnarray*}
D(a) &=& \lim_{\alpha}\lim_{i}(e_{\alpha}a\cdot \xi_{\alpha, i})- \eta_{\alpha, i}\cdot e_{\alpha}a)\\
&=&\lim_{\alpha} \lim_{i}\Big(a\cdot (e_{\alpha}\cdot \xi_{\alpha, i})- (\eta_{\alpha, i}\cdot e_{\alpha})\cdot a\Big)
\end{eqnarray*}
\end{proof}

The following is a special case of Theorem \ref{directsums}, but indicates another trick for avoiding difficulties with \ApA\ algebras.

\begin{corollary} Suppose that $A$ and $B$ are \ApA, with each having an  approximate identity consisting of central idempotents.  Then $A\oplus B$ is approximately semi-amenable.
\end{corollary}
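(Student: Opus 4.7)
The plan is to adapt the argument of Theorem \ref{cai}: the central idempotent hypothesis will let us carve $A$ and $B$ into unital pieces on which the (rather delicate) finite direct sum theorem for unital \ApA\ algebras applies. Write $(e_\alpha)\subset A$ and $(f_\beta)\subset B$ for the given approximate identities of central idempotents. First I would observe that each $e_\alpha A$ is a closed ideal of $A$ with identity $e_\alpha$, and so, being an ideal with \BAI\ in the \ApA\ algebra $A$, is itself \ApA\ by the standard reduction of \cite{GhaLoy}. Likewise $f_\beta B$ is unital and \ApA. Then I would invoke \cite[Proposition 6.1]{GhaLoyZh} (used already in the proof of Theorem \ref{appsums}) to conclude that the unital direct sum $e_\alpha A\oplus f_\beta B$ is again \ApA.

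Given a continuous derivation $D:A\oplus B\to X^{*}$ into the dual of a Banach $A\oplus B$-bimodule, the plan is to restrict $D$ to $e_\alpha A\oplus f_\beta B$ (viewed as a subalgebra, with $X^{*}$ carrying the restricted bimodule action) and use the \ApAy\ just established to produce a net $(\xi^{(\alpha,\beta)}_i)\subset X^{*}$ implementing this restriction as an approximately inner derivation. Using that $e_\alpha\oplus f_\beta$ is central and idempotent in $A\oplus B$, one then rewrites
\[
D(e_\alpha a\oplus f_\beta b)=\lim_{i}\big((a\oplus b)\cdot \zeta^{(\alpha,\beta)}_i - \omega^{(\alpha,\beta)}_i\cdot (a\oplus b)\big),
\]
with $\zeta^{(\alpha,\beta)}_i=(e_\alpha\oplus f_\beta)\cdot\xi^{(\alpha,\beta)}_i$ and $\omega^{(\alpha,\beta)}_i=\xi^{(\alpha,\beta)}_i\cdot(e_\alpha\oplus f_\beta)$. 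This is the key move: centrality allows the idempotent factor to be transferred off of $a\oplus b$ and onto the implementing elements, producing a semi-inner shape valid for every element of $A\oplus B$.

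To finish, I would note that $(e_\alpha\oplus f_\beta)$ is an approximate identity for $A\oplus B$, so by continuity of $D$ we have $D(e_\alpha a\oplus f_\beta b)\to D(a\oplus b)$. A standard iterated-limit argument over finite subsets $F\subset A\oplus B$ and $\varepsilon>0$ (first choose $(\alpha,\beta)$ large enough that the inputs are well-approximated, then choose $i$) diagonalises the indices into a single net $(\zeta_\lambda,\omega_\lambda)$ witnessing $D$ as approximately semi-inner.

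The main obstacle will be packaging the iterated limit cleanly, but this is entirely parallel to the diagonalisations already used in the proof of Theorem \ref{cai} and of Theorem \ref{semicont}. The real content is the centrality trick combined with the unital finite-direct-sum theorem for \ApA\ algebras; this is precisely the ``avoidance of difficulties with \ApA\ algebras'' mentioned in the preceding remark, since $A\oplus B$ itself can fail to be \ApA\ even when $A$ and $B$ are.
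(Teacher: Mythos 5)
Your argument is correct, and it rests on the same two ingredients as the paper's proof: the direct-sum theorem for unital \ApA\ algebras from \cite{GhaLoyZh} (used already in Theorem \ref{appsums}) and the central-idempotent localization that underlies Theorem \ref{cai}. The packaging, however, differs. The paper simply unitizes: $A^{\#}$ and $B^{\#}$ are \ApA, hence so is $A^{\#}\oplus B^{\#}$, and $A\oplus B$ sits inside it as a closed two-sided ideal with an approximate identity of central idempotents, so Theorem \ref{cai} applies verbatim. You instead work from below: you show the corners $e_{\alpha}A\oplus f_{\beta}B$ are unital and \ApA\ (heredity to ideals with a \BAI\ inside $A$ and $B$ separately, then the unital direct-sum result), and then re-run the Theorem \ref{cai} mechanism by hand --- restrict $D$, transfer $e_{\alpha}\oplus f_{\beta}$ onto the implementing functionals by centrality, and pass to the limit along the idempotent approximate identity with the usual finite-set/$\varepsilon$ diagonalization. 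The paper's route is shorter because Theorem \ref{cai} is quoted as a black box; yours avoids unitization (and the passage of \ApAy\ from $A$ to $A^{\#}$), establishes the slightly stronger fact that the corners are \ApA\ rather than merely \ApsA, and makes explicit exactly where centrality and the idempotents enter. One small simplification available to you: since $e_{\alpha}$ is a central idempotent, $a\mapsto e_{\alpha}a$ is a continuous algebra epimorphism of $A$ onto $e_{\alpha}A$, so \ApAy\ of $e_{\alpha}A$ follows from quotient heredity without invoking the ideal-with-\BAI\ reduction at all.
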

 \begin{proof}
 We have $A^{\#}$ and $B^{\#}$ are \ApA\,, whence so is $A^{\#} \oplus B^{\#}$.  But $A\oplus B$ is a closed two-sided ideal in $A^{\#} \oplus B^{\#}$.  Applying Theorem \ref{cai} gives the result.
 \end{proof}

\begin{example} \label{Feinstein}
\[
\mathcal{A} = \{ x =(x_{n})\in c_{0} : \|x\| := \|x\|_{\infty} + \sum_{n=1}^{\infty} |x_{n+1} - x_{n}| <\infty\}\,,
\]
 which is \ApA,  \emph{cf.} Corollary \ref{maxmin}.  In \cite[\S4]{GhaLoyZh} it is noted that for any sequence $(m_{j})\subset \bN$ with $m_{k} > m_{k-1} +1$, the ideal
 \[
I =\big\{x\in \mathcal{A} : x_{j} = 0\ \  {\rm unless}\  j\in \{m_{k}\} \big\}
\]
 is a complemented ideal isomorphic to $\ell^{1}$.  Then $\mathcal{A}$ and $I$ satisfy the hypotheses of Theorem \ref{cai}, and so $\ell^{1}$ is approximately semi-amenable, as already noted in \cite[Example 3.7]{GhaLoy2}.
 
\end{example}

\begin{example}  For $1 < p < \infty$, the James algebra $J_{p}$ is the subalgebra of $c_{0}(\bN)$ consisting of all sequences $a = (a_{n})$ such that 
\[
 N(a):=\sup  \left [ \sum_{i=1}^{k-1} \left|a_{n_{i+1}} -a_{\eta_{i}}\right|^{p} + \left|a_{n_{k}} - a_{n_{1}}\right|^{p}\right] < \infty\,,
\]
where the supremum is taken over all finite sequences $n_{1} < n_{2} < \cdots  < n_{k}$ in $\bN$. The norm is $\|a\|_{p} = 2^{-1/p}N(a)^{1/p}$\,.

The ideals of $J_{p}$ are just the kernels of subsets of $\bN$, \cite[Lemma 1.1]{White}, and have a \BAI\ if and only if they are of finite or cofinite dimension  \cite[Proposition 2.2]{White}.     Clearly finite-dimensional ideals are in fact amenable, and they are the only ones \cite[Theorem 3.3]{White}.  The cofinite-dimensional  ones are isomorphic to $J_{p}$ and so 
by Theorem \ref{BFAG} are \ApA.  The question remains, however, for the other ideals (infinite dimension and codimension).
For these, \cite[Theorem 4.3]{White} shows they have $\ell^{p}$ as a quotient, and so fail to be \ApA\ \cite[Theorem 4.1]{DLZ}.

 On the other hand, if $I$ has kernel $K$, then by \cite[Lemma 2.5]{AAG},  $\left(\sum_{i=1}^{m}\delta_{i}\right)(1-\chi_{K})$ gives an (unbounded) approximate identity for $I$, with multiplier bound $1$.  In fact, $\|a- a\left(\sum_{i=1}^{m}\delta_{i}\right)(1-\chi_{K})\|\leq \|a\|$, so referring to Theorem \ref{BFAG},  $K_{BASC}(J_{p})\leq 2$.  Proposition \ref{lowerbound} shows that $2$ is the exact value.


\end{example}
 
\oomit{
 \section{Reformulation of criteria}   
 Note that we could have used the equivalent hypothesis of \ACy\ in the statement of this result -- it is used in the proof.
 \begin{theorem}\label{otherform}
The Banach algebra $A$ is approximately semi-amenable if and only if there exist nets $(\xi_{i}), (\eta_{i})$ in  $A\tensor A$, $(f_{i}), (g_{i}), (h_{i}), (k_{i})$ in $A$ such that for $a\in A$,
 \begin{enumerate}
 \item[(i')] $ a\cdot \xi_{i } - \eta_{i}\cdot a -a\otimes g_{i} +h_{i}\otimes a \to 0$\,,
  \item[(ii')]$af_{i}\to a$, $k_{i}a\to a$\,, 
\item[(iii')] $\pi(\xi_{i}) - f_{i} -g_{i} \to 0$\,,
  \item[(iv')]  $\pi(\eta_{i}) - h_{i} -k_{i} \to 0$\,.
  \end{enumerate}
\end{theorem}
 \begin{proof}
If $A$ is approximately semi-amenable, then Theorems \ref{equiv} and \ref{semicont} gives nets $(\xi'_{i}), (\eta'_{i})$ in $A^{\#}\tensor A^{\#}$ such that, with $e$ the adjoined identity,   
 \begin{enumerate}
\item[(i)]  $a\cdot \xi'_i - \eta'_i \cdot a \to 0\,,\quad (a\in A)$\,,
\item[(ii)] $ \pi(\xi'_i) \to e$ and $ \pi(\eta'_i) \to e$\,.
\end{enumerate}
Now, taking $\|\cdot\|_{1}$ between the summands,
 \[ 
 A^{\#}\tensor A^{\#} = (A\tensor A) \oplus (e\otimes A) \oplus (A\otimes e) \oplus \C (e\otimes e)\,,
  \]
 and this is isometric. So we can write
  \[
 \xi'_{i} = \xi_{i } - f_{i}\otimes e - e\otimes g_{i} + \alpha_{i}e\otimes e\,,
 \]
 \[
 \eta'_{i} = \
 \eta_{i }  - h_{i}\otimes e - e\otimes k_{i} + \beta_{i}e\otimes e\,.
\] 
 Thus we can rewrite (i) as
\[
 a\cdot\Big(\xi_{i } - f_{i}\otimes e - e\otimes g_{i} + \alpha_{i}e\otimes  e\Big)
  -\Big( \eta_{i }  - h_{i}\otimes e - e\otimes k_{i} + \beta_{i}e\otimes e\Big)\cdot a \to \,.
\]
 That is
 \begin{equation}\label{comp}
 ( a\cdot \xi_{i } - \eta_{i}\cdot a) + (-a\otimes g_{i} +h_{i}\otimes a) + (-af_{i}\ +\alpha_{i}a)\otimes e + e\otimes(-\beta_{i}a +k_{i}a) \to 0\,.
\end{equation}
 Further, (ii) gives
\[
\pi(\xi_{i}) - f_{i} -g_{i} +\alpha_{i}e\to e, \ \  \pi(\eta_{i}) - h_{i} -k_{i} +\beta_{i}e\to e\,,
\]
whence $\alpha_{i}\to 1$, $\beta_{i}\to 1$, and so 
 \begin{enumerate}
 \item[(i')] $ a\cdot \xi_{i } - \eta_{i}\cdot a -a\otimes g_{i} +h_{i}\otimes a \to 0$\,,
  \item[(ii')]$af_{i}\to a$, $k_{i}a\to a$\,, 
\item[(iii')] $\pi(\xi_{i}) - f_{i} -g_{i} \to 0$\,,
  \item[(iv')]  $\pi(\eta_{i}) - h_{i} -k_{i} \to 0$\,.
  \end{enumerate}
 Conversely, if (i'), $\ldots,$ (iv') hold then (i) and (ii) follow. 
 \end{proof}
\begin{remark}
A small perturbation to $f_{i}$ and $k_{i}$ shows the limits in (iii') and (iv') can be replaced by equalities without upsetting (ii').
\end{remark} 
 %
 In the case that $A$ is commutative, using the fact that $\iota:a\otimes b\mapsto  b\otimes a$ is an isometric isomorphism of $A\tensor A$, (i') gives
 \[
  \iota(\xi_i)\cdot a - a\cdot \iota(\eta_{i}) -g_{i}\otimes a +a \otimes h_{i}  \to 0\,. 
    \]
 Subtracting this from (i') yields
 \[
 a\cdot (\xi_{i} +\iota(\eta_{i})) - (\eta_{i} + \iota(\xi_{i})) \cdot a - a\otimes(g_{i} + h_{i}) +(h_{i} + g_{i})\otimes a \to 0\,.
 \]
 Setting $\xi_{i}' = (\xi_{i} +\iota(\eta_{i}))/2$, $\eta_{i}' = (\eta_{i} + \iota(\xi_{i}))/2, g_{i}' = (g_{i} + h_{i})/2, f_{i}' = (f_{i} + k_{i})/2$, we thus have
 \begin{enumerate}
 \item[(c1)]  $ a\cdot m_{i }' - \eta_{i}'\cdot a -a\otimes g_{i}' +g_{i}'\otimes a \to 0$\  $(a\in A)$\,,
 \item[(c2)]  $ af_{i}' \to a$\  $(a\in A)$\,,
 \item [(c3)]$ \pi(\xi_{i}') - f_{i}'-g_{i}' \to 0$\,,
  \item[(c4)] $\pi(\xi_{i}' )= \pi(\eta_{i}')$\,.
\end{enumerate}
 %
\vskip 1cm
 This time  \BACy\ cannot be replaced by \BAAy.
  \begin{theorem}\label{bddappsemicont}
The Banach algebra $A$ is boundedly approximately semi-contractible if and only if there exists $K > 0$ and nets $(\xi_{i}), (\eta_{i})$ in  $A\tensor A$, $(f_{i}), (g_{i}), (h_{i}), (k_{i})$ in $A$ such that for $a\in A$,
 \begin{enumerate}
 \item[(i')] $ a\cdot \xi_{i } - \eta_{i}\cdot a -a\otimes g_{i} +h_{i}\otimes a \to 0$\,,with
  $\|a\cdot m_{i } - \eta_{i}\cdot a -a\otimes g_{i} +h_{i}\otimes a\|\leq K\|a\|$\,,
  \item[(ii')]$af_{i}\to a$, $k_{i}a\to a$\,, with $\|af_{i}\|, \|k_{i}a\|\leq K\|a\|$\,,
\item[(iii')] $\pi(\xi_{i}) = f_{i} + g_{i}$\,,
  \item[(iv')]  $\pi(\eta_{i}) = h_{i} +k_{i}$\,.
  \end{enumerate}
\end{theorem}
 \begin{proof}
If $A$ is boundedly approximately semi-contractible, then Theorem \ref{bddsemicont} gives a constant $K > 0$ and nets $(\xi'_{i}), (\eta'_{i})$ in $A^{\#}\tensor A^{\#}$ such that, with $e$ the adjoined identity,
 \begin{enumerate}
 \item[(i)] $a\cdot \xi'_{i} - \eta'_i \cdot a \to 0\,, \|a\cdot \xi'_{i} - \eta'_{i}\cdot a\| \leq K\|a\| \quad (a\in A)$\,,
\item[(ii)] $ \pi(\xi'_i) = e$ and $ \pi(\eta'_i) = e$\,.
\end{enumerate}
 Arguing as in Theorem \ref{otherform}, we have nets $(\xi_{i}), (\eta_{i})$ in  $A\tensor A$, $(f_{i}), (g_{i}), (h_{i}), (k_{i})$ in $A$ with
 \marginpar{\bf \color{red}{small corrections herer}}
 \oomit{Now, taking $\|\cdot\|_{1}$ between the summands,
 \[ 
 A^{\#}\tensor A^{\#} = (A\tensor A) \oplus (e\otimes A) \oplus (A\otimes e) \oplus \C (e\otimes e)\,,
  \]
 and this is isometric.  So we can write
  \[
 \xi_{i} = m_{i } - f_{i}\otimes e - e\otimes g_{i} + \alpha_{i}e\otimes e\,,
 \]
 \[
 \eta_{i} = n_{i }  - h_{i}\otimes e - e\otimes k_{i} + \beta_{i}e\otimes e\,,
 \] 
 and so we can rewrite (i) as
\[
 a\cdot\Big(m_{i } - f_{i}\otimes e - e\otimes g_{i} + \alpha_{i}e\otimes  e\Big)
  -\Big( n_{i }  - h_{i}\otimes e - e\otimes k_{i} + \beta_{i}e\otimes e\Big)\cdot a \to 0
\]
 That is
 \begin{equation}\label{compbdd}
 ( a\cdot m_{i } - \eta_{i}\cdot a) + (-a\otimes g_{i} +h_{i}\otimes a) + (-af_{i}\ +\alpha_{i}a)\otimes e + e\otimes(-\beta_{i}a +k_{i}a) \to 0\,.
\end{equation}
 Further, (ii) gives
\[
\pi(\xi_{i}) = f_{i} +g_{i} , \ \  \pi(\eta_{i}) =h_{i} +k_{i}\,.
\]
Thus
}
\begin{enumerate}
 \item[(i')] $ a\cdot \xi_{i } - \eta_{i}\cdot a -a\otimes g_{i} +h_{i}\otimes a \to 0$\,,
 $\|a\cdot \xi_{i } - \eta_{i}\cdot a -a\otimes g_{i} +h_{i}\otimes a\|\leq K\|a\|$\,,
  \item[(ii')]$af_{i}\to a$, $k_{i}a\to a$\,, $\|af_{i}\|, \|k_{i}a\| \leq (K+1)|a\|$\,,
\item[(iii')] $\pi(\xi_{i}) = f_{i} +g_{i}$\,,
  \item[(iv')]  $\pi(\eta_{i}) = h_{i} +k_{i}$\,.
  \end{enumerate}
 Conversely, if (i'), $\ldots,$ (iv') hold then (i) and (ii) follow with a new $K$\,. 
 \end{proof}
}
 
\section{The uniform case} 

We could  make the following definition.
 \begin{definition}\label{unifsemiamen}
The Banach algebra $A$ is \emph{uniformly approximately semi-amenable} if for any Banach $A$-bimodule $X$, any continuous derivation $D :A\to X^{*}$ is norm approximable by semi-inner mappings.
\end{definition}

But there is no need:

\begin{theorem}\label{unifsamen}
Suppose that the Banach algebra $A$ is uniformly approximately semi-amenable.  Then $A$ is amenable.
\end{theorem}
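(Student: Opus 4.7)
The plan is to show that uniform approximability by semi-inner maps forces every continuous derivation $D\colon A\to X^{*}$ to be inner, whence $A$ is amenable by Johnson's theorem. The argument combines the derivation identity (to produce a uniformly vanishing ``defect"), a bootstrap to a \BAI\ in $A$, and weak-$*$ compactness to extract an implementing element.

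First, fix a continuous derivation $D\colon A\to X^{*}$ and choose, for each $n\in\bN$, elements $\xi_{n},\eta_{n}\in X^{*}$ with
\[
\sup_{\|a\|\leq 1}\|D(a)-(a\cdot\xi_{n}-\eta_{n}\cdot a)\|\leq 1/n\,.
\]
Applying the derivation identity for $D$ on a product $ab$ and subtracting the semi-inner approximants evaluated at $a$, $b$, and $ab$ yields the uniform analogue of \eqref{limits},
\[
\|a\cdot(\xi_{n}-\eta_{n})\cdot b\|\leq \tfrac{3}{n}\|a\|\|b\|\qquad(a,b\in A)\,.
\]
In parallel, applying the hypothesis to the canonical derivation $a\mapsto a$ from $A$ into the dual $A$-bimodule $A^{**}$ (with left multiplication on the left and trivial right action, whose predual is $A^{*}$ with the symmetric trivial-left / right-multiplication actions) produces $\zeta_{n}\in A^{**}$ with $\|a-a\cdot\zeta_{n}\|\leq \|a\|/n$; combined with Goldstine's theorem and a Mazur convex-combination argument this yields a bounded right approximate identity in $A$, and symmetrically a bounded left approximate identity, so that $A$ admits a two-sided \BAI\ $(e_{\alpha})$, of bound $M$ say.

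With the \BAI\ in hand, taking $b=e_{\alpha}$ in the first estimate and passing to the limit via the weak-$*$ lower semicontinuity of the dual norm gives $\|a\cdot(\xi_{n}-\eta_{n})\|\leq 3M\|a\|/n$, so that the semi-inner operator ${\rm ad}_{\xi_{n},\eta_{n}}$ differs from the genuine inner derivation ${\rm ad}_{\eta_{n}}$ by an operator of norm at most $3M/n$. Hence $D$ is the operator-norm limit of the inner derivations $({\rm ad}_{\eta_{n}})$.

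The main obstacle is then to upgrade this to the conclusion that $D$ itself is inner. My plan is to reduce, via the \BAI\ and the decomposition argument from the proof of Proposition \ref{cbaistrong}, to the neo-unital summand of $X^{*}$; on this summand each $\eta_{n}$ is determined only up to the $A$-centralizer $Z$ of $X^{*}$, and using the uniform operator-norm control on $({\rm ad}_{\eta_{n}})$ one can select bounded representatives $\tilde{\eta}_{n}$ from the cosets $\eta_{n}+Z$. A weak-$*$ cluster point $\eta\in X^{*}$ of the $(\tilde{\eta}_{n})$ then satisfies $D={\rm ad}_{\eta}$, so $D$ is inner. Since $D$ was an arbitrary continuous derivation into a dual module, $A$ is amenable.
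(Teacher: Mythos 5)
Your outline tracks the paper's proof closely (the same test module $A^{**}$ with a one-sided trivial action, the same uniform analogue of \eqref{limits}, the same reduction to a uniform limit of inner derivations), but the two steps where the real work happens both have genuine gaps. First, the passage from the elements $\zeta_{n}\in A^{**}$ with $\|a-a\cdot\zeta_{n}\|\leq\|a\|/n$ to a \emph{bounded} right approximate identity does not follow from Goldstine plus Mazur: those give you, for each finite set, an element $u\in A$ with $\|au-a\|$ small but only with $\|u\|\leq\|\zeta_{n}\|$, and nothing controls $\sup_{n}\|\zeta_{n}\|$. The boundedness has to be manufactured, and the paper does this with a Neumann-series trick: a \emph{single} $t$ with $\|at-a\|\leq\|a\|/2$ extends, by weak$^{*}$-continuity of the first Arens product in its first variable and weak$^{*}$ lower semicontinuity of the norm, to $\|bt-b\|\leq\|b\|/2$ for all $b\in A^{**}$; hence right multiplication by $t$ is invertible on $A^{**}$, which forces $A^{**}$ to have a right identity and so $A$ to have a right \BAI\ (and $A^{\rm op}$ gives the left one). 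Without this, your net $(e_{\alpha})$ and the bound $M$ are not available. (A smaller point: the estimate $\|a\cdot(\xi_{n}-\eta_{n})\|\leq 3M\|a\|/n$ needs $e_{\alpha}\cdot x\to x$ for $x\in X$, so the neo-unital reduction must be carried out \emph{before} that step, not afterwards as in your last paragraph.)

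The more serious gap is the final step. Knowing that $D$ is the operator-norm limit of the inner derivations ${\rm ad}_{\eta_{n}}$ does not allow you to ``select bounded representatives $\tilde{\eta}_{n}$ from the cosets $\eta_{n}+Z$ using the uniform operator-norm control'': such a selection is possible exactly when the injective map $X^{*}/Z\to\mathcal{B}(A,X^{*})$, $\eta+Z\mapsto{\rm ad}_{\eta}$, is bounded below, i.e.\ when the space of inner derivations is closed in $\mathcal{B}(A,X^{*})$ --- which is essentially the assertion being proved and is false for general Banach algebras and dual modules. This is precisely why ``uniformly approximately amenable implies amenable'' is a theorem with content, namely \cite[Theorem 3.2]{GhaLoyZh}, which the paper invokes at exactly this point; its proof is a global argument through the diagonal module (in effect the same Neumann-series device applied to the derivation $a\mapsto a\otimes e-e\otimes a$ into $(\ker\pi)^{**}$, yielding a right identity there and hence a virtual diagonal), not a module-by-module selection of implementing elements. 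To complete your argument you should either quote that theorem, as the paper does once it has shown $\|\xi_{n}-\eta_{n}\|\to 0$, or reproduce its diagonal argument; the coset-selection step as written would fail.
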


\begin{proof}
Consider the module $A^{**}$ with natural right action and trivial left action, so the inclusion $j:A\hookrightarrow A^{**}$ is a derivation.  Thus there is $t\in A^{**}$ such that  $\|at - a\|  \leq \|a\|/2$, $a\in A$.  Taking the first Arens product on $A^{**}$, 
weak$^{*}$-continuity shows  the inequality remains valid for $a\in A^{**}$.

So right multiplication by $t$ on $A^{**}$, $\rho_{t}$, satisfies $\|\rho_{t} - {\rm id}\| \leq 1/2$, and so $\rho_{t}$  is invertible, whence there is $b\in A^{**}$ with $bt =t$.  But then for any $a\in A^{**}$, $(ab- a)t = 0$, whence $ab = a$.  Thus $A^{**}$ has a right identity.  But then $A$ has a right \BAI.
 Now apply the same argument to $A^{\rm{op}}$ so that $(A^{\rm op})^{**}$ has a right identity, and so $A^{\rm{op}}$  has a bounded right approximate identity, that is, $A$ has a bounded left approximate identity $(e_{\alpha})$.

 Suppose that $D:A\to X^{*}$ is a continuous derivation for some  Banach $A$-bimodule $X$.  By a standard reduction, we may suppose that $X$ is neo-unital. By hypothesis on $A$  there are sequences $(\xi_{n}), (\eta_{n})$ in $X^{*}$ such that
\[
a\cdot \xi_{n} - \eta_{n}\cdot a \to D(a) \ {\rm uniformly\ on\ } \|a\|\leq 1\,.
\]
Similar to equation \eqref{limits} following Definition \ref{asi}, it follows that
\begin{equation}\label{uniformcgce}
a\cdot (\xi_{n} - \eta_{n})\cdot b \to 0  \ {\rm uniformly\ on\ } \|a\|\leq 1, \|b\|\leq 1\,.
\end{equation}

Given $\varepsilon > 0$, take $n_{0}$ such that $\|a\cdot (\xi_{n} - \eta_{n})\cdot a\| < \varepsilon \|a\|$ for $a\in A$,   $n > n_{0}$.
Then, for $x\in X$, 
\begin{eqnarray*}
\langle \xi_{n} - \eta_{n} , x \rangle &=& \lim_{\alpha} \langle \xi_{n} - \eta_{n} ,e_{\alpha} x e_{\alpha}\rangle\\
&=& \lim_{\alpha} \langle e_{\alpha} (\xi_{n} - \eta_{n})e_{\alpha}, x\rangle\\
 &\leq& \limsup_{\alpha} \|e_{\alpha} (\xi_{n} - \eta_{n})e_{\alpha}\| \|x\|\,.
\end{eqnarray*}

From equation (\ref{uniformcgce}) the first factor converges to zero as $n\to \infty$.  It follows that $\|\xi_{n} - \eta_{n}\|\to 0$.

So we are in fact in the uniformly \ApA\ case, whence the result by \cite[Theorem 3.2]{GhaLoyZh}.\end{proof}

\begin{theorem}\label{unifscont}
Suppose that the Banach algebra $A$ is uniformly approximately semi-contractible.  Then $A$ is contractible.
\end{theorem}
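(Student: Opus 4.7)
The plan is to mirror the proof of Theorem~\ref{unifsamen}, with the key differences being that we work with arbitrary Banach bimodules (not only dual ones) and aim for contractibility in place of amenability.

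The first step is to extract an identity for $A$. View $A$ as an $A$-bimodule with trivial left action and natural right action; then the identity map $j:A\to A$, $j(a)=a$, is a continuous derivation. Uniform approximate semi-contractibility applied to $j$ yields $\xi,\eta\in A$ with $\|-\eta a-a\|\le\|a\|/2$ for all $a\in A$, i.e.\ $\|\lambda_{-\eta}-\mathrm{id}_A\|_{\mathrm{op}}\le 1/2$. Hence $\lambda_{-\eta}$ is invertible on $A$; choosing $b\in A$ with $\eta b=\eta$, the identity $\lambda_{-\eta}(ba)=-\eta ba=-\eta a=\lambda_{-\eta}(a)$ combined with the injectivity of $\lambda_{-\eta}$ forces $ba=a$, so $b$ is a left identity. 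The symmetric argument, using $A$ with natural left and trivial right action, produces a right identity, whence $A$ is unital with identity $e$.

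Given any Banach $A$-bimodule $X$ and continuous derivation $D:A\to X$, the commuting bimodule idempotents $\lambda_e,\rho_e$ split $X$ into the four summands $eXe$, $eX(1-\rho_e)$, $(1-\lambda_e)Xe$ and $(1-\lambda_e)X(1-\rho_e)$. On the latter three, $A$ acts trivially on at least one side; the corresponding components of $D$ are inner by a direct check (the last vanishes, and on $eX(1-\rho_e)$ one has $D(a)=a\cdot D(e)=\mathrm{ad}_{D(e)}(a)$, similarly for the third). Thus it suffices to treat $D_1:A\to X_1:=eXe$, a derivation into a neo-unital bimodule. Uniform approximate semi-contractibility of $D$ transfers to $D_1$ by composing with the bimodule projection $X\to X_1$: there are sequences $(\xi_n'),(\eta_n')\subset X_1$ with $\|a\xi_n'-\eta_n' a-D_1(a)\|_{\mathrm{op}}\to 0$. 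Setting $a=e$ and using neo-unitality together with $D_1(e)=0$ gives $\|\xi_n'-\eta_n'\|\to 0$, whence
\[
\|\mathrm{ad}_{\xi_n'}(a)-D_1(a)\|\le\|a\xi_n'-\eta_n'a-D_1(a)\|+\|\eta_n'-\xi_n'\|\,\|a\|\longrightarrow 0
\]
uniformly in $\|a\|\le 1$. So $D_1$, and hence $D$ (modulo the already-handled inner derivations), is uniformly norm-approximable by inner derivations; that is, $A$ is uniformly approximately contractible.

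The final step deduces contractibility from uniform approximate contractibility, by invoking the contractibility analog of \cite[Theorem~3.2]{GhaLoyZh}. I expect this to be the main obstacle: one must upgrade uniform norm-approximation by inner derivations to exact innerness, which is a non-trivial passage (the map $\xi\mapsto\mathrm{ad}_\xi$ need not have closed range in general). As in the amenable case, the idea is to modify $(\xi_n')$ by elements of the centraliser $Z_A(X_1)=\{\xi\in X_1:a\xi=\xi a\ \forall a\in A\}$ to obtain a Cauchy sequence whose limit $\xi\in X_1$ satisfies $\mathrm{ad}_\xi=D_1$. If the ready-made analog is unavailable, the argument of \cite[Theorem~3.2]{GhaLoyZh} should transfer essentially verbatim, since the structural inputs (unitality of $A$ and neo-unitality of $X_1$) are identical in the dual and non-dual settings.
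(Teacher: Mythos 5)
Your proposal is correct and follows essentially the paper's route: a simpler version of the proof of Theorem~\ref{unifsamen} (here producing an actual identity for $A$ rather than one-sided bounded approximate identities, then reducing to a unital module and forcing $\|\xi_n'-\eta_n'\|\to 0$) shows that $A$ is uniformly approximately contractible. The final passage you flag as the main obstacle is exactly the known result the paper invokes, namely \cite[Theorem 4.1]{GhaLoy}, so no new argument is required there.
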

\begin{proof}  A simpler version of the above shows  that $A$ is uniformly approximately contractible.  Now use \cite[Theorem 4.1]{GhaLoy}.
\end{proof}

\oomit{
Alternative proof of Theorem \ref{unifsamen}, using the same ideas.

\begin{theorem}\label{uniform}
 Suppose that a Banach algebra $A$ is uniformly approximately semi-amenable.  Then $A$ is amenable.
 \end{theorem}
 \begin{proof}   The argument is close to that of \cite[Theorem 3.2]{GhaLoyZh}, but there are subtle changes needed as we cannot reduce to the unital case, cf. Proposition \ref{equality1}.
 Let $\pi : A^{\#}\tensor A^{\#}$ be the product map, $K_{0} ={\rm ker}\pi$ the diagonal ideal.  By \cite[Theorem VII.2.20]{HEL} and \cite[Proposition 11.4]{BandD}  we need to show that $K_{0}^{**}$ has a right identity.
Let $S =\displaystyle \sum_{j} a_{j}\otimes b_{j}\in K_{0}$, $t, u\in K_{0}^{**}$.  Then $ \displaystyle \sum_{j} a_{j} b_{j} = 0$, so by adding in $0$ judiciously, 
\begin{eqnarray*}
st-s &= &(\sum_{j} a_{j}\otimes b_{j})t - u\sum a_{j} b_{j} - \sum_{j} a_{j}\otimes b_{j} + e\sum_{j} a_{j}b_{j}\\
&= & \sum_{j} \Big(a_{j}\cdot t -u\cdot a_{j} -a_{j}\otimes e + (e \otimes a_{j})\cdot b_{j}\Big)\,.
\end{eqnarray*}
So that
\[
\|st - s\| \leq \left(\sum_{j} \|a_{j}\| \|b_{j}\|\right) \sup_{ \|a\| = 1} \| a\cdot t - u\cdot a - a\otimes e + e\otimes a\|\,,
\]
and hence
\begin{equation}\label{tensor bound}
\|st - s\| \leq \|s\| \sup_{ \|a\| = 1} \| a\cdot t - u\cdot a - a\otimes e + e\otimes a\|\,.
\end{equation}
Weak$^{*}$ continuity shows  \eqref{tensor bound} remains valid for $s\in K_{0}^{**}$.
Now take the derivation $D :A\to K_{0}^{**}$ give by $D(a) = a\otimes e - e\otimes e$.  By hypothesis, there are  sequences $(t_{n)}, (u_{n})\in K_{0}^{**}$ such that for all $a\in A$,
\[
\|a\cdot t_{n} - u_{n}\cdot a -a \otimes e + e\otimes a\| \leq \|a\|/n\,.
\]
 In particular, there are $t, u\in K_{0}^{**}$ with   \marginpar{\bf $u$ turns out to be irrelevant!}
\[
 \|a\cdot t - u\cdot a -a \otimes e + e\otimes a\| \leq \|a\|/2\,.
 \]
From \eqref{tensor bound} right multiplication by $t$ on $K_{0}^{**}$, $\rho_{t}$, satisfies $\|\rho_{t} - {\rm id}\| < 1$, and so $\rho_{t}$  is invertible, whence there is $x\in K_{0}^{**}$ with $xt =t$.  But then for any $y\in K_{0}^{**}$, $(yx - y)t = 0$, whence $yx = y$.  Thus $K_{0}^{**}$ has a right identity, as required.
 \end{proof}
Reduction proof:  Looking at the module $A^{**}$ with natural right action and trivial left action, gives the same inequality as above: there is $t\in A^{**}$  such that
\[
 \|st - s\|\leq \|s\|/2\,, \quad s\in A\,.
 \] 
Then the invertibility argument gives $A^{**}$ has a right identity, whence $A$ has a bounded right approximate identity.  Now apply the same argument to $A^{\rm{op}}$ so that $(A^{\rm op})^{**}$ has a right identity, and so $A^{\rm{op}}$  has a bounded right approximate identity, that is $A$ has a bounded left approximate identity.  Now by Proposition 2.2  we have the approximate amenable case, \cite[Theorem 3.2]{GhaLoyZh}.
 }

\begin{corollary}\label{findim}
 Suppose that $A$ is a  finite-dimensional Banach algebra and is  approximately semi-amenable. Then $A$ is amenable.
 \end{corollary}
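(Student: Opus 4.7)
The plan is to reduce the problem to Theorem \ref{unifsamen} by upgrading approximate semi-amenability to \emph{uniform} approximate semi-amenability in the finite-dimensional setting. The point is that on a finite-dimensional domain, strong-operator convergence of linear maps automatically coincides with convergence uniformly on the unit ball.

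First I would fix a basis $a_1,\ldots,a_n$ of $A$. Since all norms on a finite-dimensional space are equivalent, there is a constant $C>0$ such that every $a\in A$ with $\|a\|\leq 1$ can be written as $a=\sum_{j=1}^n c_j(a)\,a_j$ with $|c_j(a)|\leq C$. Consequently, for any Banach space $Y$ and any linear map $T:A\to Y$,
\[
\sup_{\|a\|\leq 1}\|T(a)\|\ \leq\ C\sum_{j=1}^n \|T(a_j)\|\,,
\]
so that pointwise smallness on the finite set $\{a_1,\ldots,a_n\}$ forces norm smallness of $T$.

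Next, take any Banach $A$-bimodule $X$ and any continuous derivation $D:A\to X^*$. Approximate semi-amenability of $A$ provides nets $(\xi_i),(\eta_i)$ in $X^*$ with $a\cdot\xi_i-\eta_i\cdot a\to D(a)$ for each $a\in A$; in particular, this convergence holds at each of the basis vectors $a_1,\ldots,a_n$. By the estimate above applied to $T_i:=\mathrm{ad}_{\xi_i,\eta_i}-D$, we deduce that $T_i\to 0$ uniformly on the unit ball of $A$. Hence $D$ is norm-approximable by semi-inner maps, i.e.\ $A$ is uniformly approximately semi-amenable in the sense of Definition \ref{unifsemiamen}.

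Finally, Theorem \ref{unifsamen} asserts that any uniformly approximately semi-amenable Banach algebra is amenable, which gives the conclusion. There is no real obstacle here: the argument is essentially a one-line observation that finite-dimensionality collapses the strong-operator topology on $\mathcal{B}(A,X^*)$ to the uniform-on-bounded-sets topology, after which Theorem \ref{unifsamen} does all the work.
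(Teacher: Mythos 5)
Your proof is correct and follows essentially the same route as the paper: finite-dimensionality of $A$ collapses strong-operator (pointwise) approximation to norm approximation on the unit ball, so approximate semi-amenability upgrades to uniform approximate semi-amenability, and Theorem \ref{unifsamen} then yields amenability. The paper's own (terser) proof invokes exactly this observation, noting in addition that $A^{**}=A$ so the uniform-case argument simplifies.
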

 
 \begin{proof} The argument for the uniform case shows only a single derivation into a finite-dimensional module needs to be considered.  But in the finite-dimensional case  $A^{**} = A$, and the strong-operator topology coincides with the norm topology.
 \end{proof}

 \section{Some open problems}
 \begin{enumerate}
 \item  Are the Schatten classes $S_{p}$ \ApsA?\\
 
 \item  What is the connection between \ApsAy and pseudo-amenability?\\



 \item Suppose that $A$ is \ApsA, and has a bounded approximate identity.  What can be said further?  Proposition \ref{cbaistrong} gives a partial answer. \\

  \item  What is the situation regarding approximate semi-amenability for $C^{*}$-algebras?  Note that $B(H)$ fails to be \ApA\ by the argument of \cite{Ozawa}, and the Choi algebra fails to be \ApA\ by the argument of  \cite{MDChoi}. 
  So both fail to be \ApsA\ by Theorem \ref{semi+baa=approx}.\\

 \item  Is a complemented ideal in an  \ApsA \  Banach algebra itself  \ApsA ? \\
 
 \item  We have seen in Proposition \ref{difference} that \BApSCy\ and \BApSAy\ are distinct notions, and in Theorem \ref{equiv} that \ApsCy\ and \ApsAy\ are the same.   Are \BApSAy\ and \ApsAy\ distinct?
 
\end{enumerate}

\end{document}